\numberwithin{equation}{section}
\newtheorem{theorem}{Theorem}[section]
\newtheorem{lemma}[theorem]{Lemma}
\newtheorem{proposition}[theorem]{Proposition}
\newtheorem{corollary}[theorem]{Corollary}
\theoremstyle{definition}
\newtheorem{definition}[theorem]{Definition}
\theoremstyle{remark}
\newtheorem{remark}[theorem]{Remark}
\newtheorem{ex}[theorem]{Example}
\newcommand{\R}{\mathbb{R}}
\newcommand{\h}{H^1_V(\mathcal{M})}
\newcommand{\M}{\mathcal{M}}
\newcommand{\RR}{\mathbb R}
\newcommand{\NN}{\mathbb N}
\newcommand{\N}{\mathbb N}
\DeclareMathOperator*{\essinf}{ess\, inf}
\numberwithin{equation}{section}
\DeclareSymbolFont{rsfs}{U}{rsfs}{m}{n}
\DeclareSymbolFontAlphabet{\mathscr}{rsfs}
\begin{document}

\title{Elliptic problems on complete non-compact
Riemannian manifolds with asymptotically non-negative Ricci curvature}

\author{Giovanni Molica Bisci\thanks{Email address: \texttt{gmolica@unirc.it}}}
\affil{\small Dipartimento P.A.U., Universit\`a  degli
Studi Mediterranea di Reggio Calabria, Salita Melissari - Feo di
Vito, 89100 Reggio Calabria (Italy)}

\author{Simone Secchi\thanks{Email address: \texttt{Simone.Secchi@unimib.it}}}
\affil{\small Dipartimento di Matematica e Applicazioni Universit\`a degli Studi di Milano-Bicocca, via Roberto Cozzi 55, I-20125, Milano, Italy}

\maketitle

\begin{abstract}
In this paper we discuss the existence and non--existence of weak solutions to parametric equations involving the Laplace--Beltrami operator $\Delta_g$ in a complete non--compact $d$--dimensional ($d\geq 3$) Riemannian manifold $(\mathcal{M},g)$ with asymptotically non--negative Ricci curvature and intrinsic metric $d_g$. Namely, our simple model is the following problem
\begin{equation*}
\left\{
\begin{array}{ll}
-\Delta_gw+V(\sigma)w=\lambda \alpha(\sigma)f(w) & \mbox{ in } \mathcal{M}\\
w\geq 0 & \mbox{ in } \mathcal{M}
\end{array}\right.
\end{equation*}
where $V$ is a positive coercive potential, $\alpha$ is a positive
bounded function, $\lambda$ is a real parameter and $f$ is a suitable
continuous nonlinear term. The existence of at least two non--trivial
bounded weak solutions is established for large value of the parameter
$\lambda$ requiring that the nonlinear term $f$ is non--trivial,
continuous, superlinear at zero and sublinear at infinity. Our
approach is based on variational methods. No assumptions on the
sectional curvature, as well as symmetry theoretical arguments, are
requested in our approach.

\medskip

\noindent
{\small \textbf{AMS Subject Classification:} 35J20; 35J25; 49J35; 35A15
}

\noindent{\small \textbf{Keywords:} Non-compact Riemannian manifold, Ricci curvature, variational method, multiplicity of solutions.}
\end{abstract}

\tableofcontents
\bigskip
\bigskip
\hfill \emph{Dedicated to Professor Carlo Sbordone}\par
 \hfill \emph{on the occasion of his 70th birthday}
\section{Introduction}\label{sec:introduzione}

It is well-known that sign conditions on the Ricci curvature $\operatorname{Ric}_{(\M,g)}$ give topological
and diffeomorphic informations on a Riemannian manifold $(\M,g)$. For instance,
the Myers theorem affirms that a complete Riemannian manifold whose Ricci curvature
satisfies the inequality
\[
\operatorname{Ric}_{(\M,g)} \geq kg,
\]
for some positive constant $k$, is compact. In the same spirit the
Hamilton's theorem ensures that a compact simply connected three
dimensional Riemannian manifold whose Ricci curvature is positive is
diffeomorphic to the three dimensional sphere; see the celebrated
books of Hebey \cite{EH1,EH2} for details.

Among these intriguing geometric implications, conditions on the Ricci
curvature produce meaningful compact Sobolev embeddings of certain
weighted Sobolev space associated to $\M$ into the Lebesgue spaces;
see Lemma \ref{immer} below. The compact embeddings recalled above are
used in proving essential properties of the energy functional
associated to elliptic problems on $\M$, in order to apply the
minimization theorem or the critical point theory. Hence, the
aforementioned compact embedding results give rise to applications to
differential equations in the non--compact framework.

In this order of ideas, this paper is concerned with the existence of
solutions to elliptic problems involving the Laplace--Beltrami
operator $\Delta_g$ on a complete, non--compact $d$--dimensional
($d\geq 3$) Riemannian manifold $(\mathcal{M},g)$ with asymptotically
non--negative Ricci curvature $\operatorname{Ric}_{(\mathcal{M},g)}$
with a base point $\bar{\sigma}_0\in \mathcal{M}$. More precisely, we
assume that the following technical condition on
$\operatorname{Ric}_{(\mathcal{M},g)}$ holds:
\begin{itemize}
\item[$(As_{R}^{H, g})$] there exists a non--negative bounded function $H\in C^1([0,\infty))$ satisfying
\[
\int_0^{+\infty}tH(t)\, dt<+\infty,
\]
such that
\[
\operatorname{Ric}_{(\mathcal{M},g)}(\sigma)\geq -(d-1)H(d_g(\bar{\sigma}_0,\sigma)),
\]
for every $\sigma\in \mathcal{M}$, where  $d_g$ is the distance function associated to the Riemannian metric $g$ on $\mathcal{M}$.
\end{itemize}

Roughly speaking, condition $(As_{R}^{H, g})$ express a control on the Ricci curvature $\operatorname{Ric}_{(\mathcal{M},g)}$ that is governed in any point $\sigma\in M$ by the value
\[
h(\sigma):=-(d-1)H(d_g(\bar{\sigma}_0,\sigma)),
\]
in any direction. See \cite{PRS} for more details concerning the geometrical meaning of the above condition.\par

Let $V\colon \mathcal{M}\rightarrow \R$ be a continuous function satisfying the following conditions:
\begin{itemize}
\item[$(V_1)$] $\nu_0:=\inf_{\sigma\in \mathcal{M}}V(\sigma)>0$;
\item[$(V_2)$] \textit{there exists $\sigma_0\in \mathcal{M}$ such that}
\[
\lim_{d_g(\sigma_0,\sigma)\rightarrow +\infty}V(\sigma)=+\infty.
\]
\end{itemize}
 Motivated by a wide interest in the current literature on elliptic problems on manifolds, under the variational viewpoint, we study
here the existence and non--existence of weak solutions to the following problem
\begin{equation}\label{problema}
\left\{
\begin{array}{ll}
-\Delta_gw+V(\sigma)w=\lambda \alpha(\sigma)f(w) & \mbox{ in } \mathcal{M}\\
w\geq 0 & \mbox{ in } \mathcal{M}\\
w\rightarrow 0 & \mbox{ as } d_g(\sigma_0,\sigma)\rightarrow +\infty,
\end{array}\right.
\end{equation}
where $\lambda$ is a positive real parameter and $\alpha\colon \mathcal{M}\to\R$ is a function belonging to $L^{\infty}(\mathcal{M})\cap L^{1}(\mathcal{M})\setminus\{0\}$ and satisfying
\begin{equation}\label{proprietabeta}
\alpha(\sigma) \geq 0\quad\mbox{for a.e.}\,\, \sigma\in \mathcal{M}.
\end{equation}

\begin{remark}
	We just point out that the potential $V$ has
	a crucial r\^{o}le concerning the existence and behaviour of
	solutions. For instance, after the seminal paper of
	Rabinowitz \cite{seminalrabinowitz}, where the potential $V$
	is assumed to be coercive, several different assumptions are adopted in order to obtain
	existence and multiplicity results. For instance, similar assumptions on $V$ have been
	studied on Euclidean spaces, see \cite{B1,B2,B3,F1,F2,F3}. More recently in \cite[Theorem 1.1]{FaraciFarkas} the authors
	studied a characterization theorem concerning the existence of multiple solutions on complete non--compact Riemannian manifolds under the key assumption of asymptotically non--negative Ricci curvature. The results achieved in the cited paper extend some multiplicity properties recently proved in several different framework (see, for instance, the papers \cite{Anello, MRadu2, Radu3, GMBSe2}) and mainly inspired by the work \cite{Riccerinew}.
\end{remark}

In the first part of the paper,  we suppose that $f\colon [0,+\infty)\to\R$ is continuous, \textit{superlinear at zero}, i.e.
\begin{equation}\label{supf}
\lim_{t\to 0^+}\frac{f(t)}{t}=0, \tag{$f_1$}
\end{equation}
\textit{sublinear at infinity}, i.e.
\begin{equation}\label{condinfinito}
\lim_{t\to +\infty}\frac{f(t)}{t}=0, \tag{$f_2$}
\end{equation}
and such that
\begin{equation}\label{Segno}
\sup_{t>0}F(t)>0, \tag{$f_3$}
\end{equation}
where
\[
F(t):=\int_0^t f(\tau)d\tau,
\]
for any $t\in[0,+\infty)$.\par
 Assumptions \eqref{supf} and \eqref{condinfinito} are quite standard in presence of subcritical terms; moreover, together with \eqref{Segno}, they assure that the number
\begin{equation}\label{c6}
c_f:=\max_{t> 0}\frac{f(t)}{t}.
\end{equation}
is well--defined and strictly positive. Further, property \eqref{supf} is a sublinear growth condition at infinity on the nonlinearity $f$ which complements the classical Ambrosetti--Rabinowitz assumption. In the Euclidean case, problems involving only sublinear terms have been also studied by
several authors, see, among others, \cite{Cia, Gigio1}. Other multiplicity results
for elliptic eigenvalue problems involving pure concave nonlinearities on bounded
domains can be found in \cite{Per}.

\medskip

We emphasize that in this paper we are interested in equations depending on parameters.
In many mathematical problems deriving from applications the presence of one
parameter is a relevant feature, and the study of the way solutions depend on
parameters is an important topic. Most of the results in this direction were obtained
through bifurcation theory and variational techniques.

In order to state the main (non)existence theorem in presence of a sublinear term at infinity, let us introduce some notation. Let
\[
B_\sigma(1):= \left\{ \zeta\in \mathcal{M}:d_g(\sigma,\zeta)<1 \right\},
\]
be the open unit geodesic ball of center $\sigma\in \mathcal{M}$, and denote by
\[
\operatorname{Vol}_g(B_{\sigma}(1)):=\int_{B_{\sigma}(1)}dv_g
\]
its volume respect to the intrinsic metric $g$, that is the
$d$--dimensional Hausdorff measure $\mathcal{H}_d$ of
$B_\sigma(1)\subset \mathcal{M}$.

The first main result of the paper is an existence theorem for
equations driven by the Laplace--Beltrami operator, as stated below.
\begin{theorem}\label{generalkernel0f}
Let $(\mathcal{M},g)$ be a complete, non--compact, Riemannian manifold of dimension $d \geq 3$
such that condition $(As_{R}^{H, g})$ holds, and assume that
\begin{equation}\label{c6nuovo}
\inf_{\sigma\in \mathcal{M}}\operatorname{Vol}_g(B_{\sigma}(1))>0.
\end{equation}
Furthermore, let $V$ be a potential for which hypotheses $(V_1)$ and $(V_2)$ are valid. In addition, let $\alpha\in L^{\infty}(\mathcal{M})\cap L^1(\M)\setminus\{0\}$ satisfy \eqref{proprietabeta} and $f\colon [0,+\infty)\to\R$ be a continuous function verifying \eqref{supf}--\eqref{Segno}. Then, the following conclusions hold:
\begin{itemize}
\item[$(i)$] Problem \eqref{problema} admits only the trivial solution whenever
\[
0\leq \lambda <\frac{\nu_0}{c_f\left\|\alpha\right\|_{\infty}} ;
\]
\item[$(ii)$] there exists $\lambda^\star>0$ such that problem \eqref{problema} admits at least two distinct and non--trivial weak solutions $w_{1,\lambda}, w_{2,\lambda}\in L^{\infty}(\mathcal{M}) \cap H_V^{1}(\mathcal{M})$,
    provided that $\lambda>\lambda^\star$.
\end{itemize}
\end{theorem}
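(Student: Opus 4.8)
The approach is variational. After extending $f$ by zero on $(-\infty,0)$ (note $f(0)=0$ by \eqref{supf}), I would study the energy functional $\cE_\lambda\colon\h\to\R$,
\[
\cE_\lambda(w):=\frac12\|w\|^2-\lambda\int_\M\alpha(\sigma)F(w)\,dv_g,\qquad \|w\|^2:=\int_\M\bigl(|\nabla_g w|^2+V(\sigma)w^2\bigr)\,dv_g .
\]
From \eqref{supf}, \eqref{condinfinito} and continuity of $f$ one gets $c_1>0$ with $|f(t)|\le c_1 t$ and $F(t)\le\tfrac{c_f}{2}t^2$ for all $t\ge0$ (cf. \eqref{c6}), so the associated Nemytskii operator is bounded and continuous on $L^2(\M)$ and $\cE_\lambda\in C^1(\h,\R)$, its critical points being exactly the weak solutions of the equation in \eqref{problema}; testing $\cE_\lambda'(w)=0$ with the negative part $w^-$ forces $\|w^-\|^2=0$, so these solutions are non--negative, and the decay $w\to0$ at infinity will be recovered a posteriori. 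Part $(i)$ is then immediate: testing the equation with $w$ and using $f(t)t\le c_f t^2$ together with $(V_1)$ gives $\|w\|^2\le\lambda c_f\|\alpha\|_\infty\nu_0^{-1}\|w\|^2$, which forces $w=0$ whenever $\lambda<\nu_0/(c_f\|\alpha\|_\infty)$.

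For part $(ii)$ the hypotheses $(As_{R}^{H,g})$ and \eqref{c6nuovo} enter only through Lemma~\ref{immer}, which yields the compact embedding of $\h$ into $L^p(\M)$ for $p$ in a subcritical range; I would use it for $p=2$ and for some fixed $q\in(2,2^*)$. Two elementary consequences of \eqref{supf}--\eqref{condinfinito} drive the proof: for every $\ge>0$ there are $C_\ge,C_\ge'>0$ with $F(t)\le\ge t^2+C_\ge t^q$ and $F(t)\le\ge t^2+C_\ge'$ for all $t\ge0$. The first, together with $(V_1)$ and $\h\hookrightarrow L^q(\M)$, gives for a suitable $\ge=\ge(\lambda)$ an estimate $\cE_\lambda(w)\ge\tfrac14\|w\|^2-C\|w\|^q$ for $\|w\|$ small, hence numbers $\rho,\eta>0$ with $\cE_\lambda\ge\eta$ on $\{\|w\|=\rho\}$ and $\cE_\lambda\ge0$ on $\overline{B_\rho}$. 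The second, together with $(V_1)$ and $\alpha\in L^1(\M)$, gives
\[
\cE_\lambda(w)\ge\Bigl(\tfrac12-\tfrac{\lambda\ge\|\alpha\|_\infty}{\nu_0}\Bigr)\|w\|^2-\lambda C_\ge'\|\alpha\|_1 ,
\]
so for $\ge$ small $\cE_\lambda$ is coercive and bounded below. Since $w_n\weakto w$ in $\h$ implies $w_n\to w$ in $L^2(\M)$ and, along a subsequence, a.e., a Vitali--type argument shows that $w\mapsto\int_\M\alpha F(w)\,dv_g$ is sequentially weakly continuous; hence $\cE_\lambda$ is sequentially weakly lower semicontinuous and attains a global minimum at some $w_{1,\lambda}\in\h$.

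To produce $\lambda^\star$ and a nontrivial minimizer I would invoke \eqref{Segno}: fixing $t_0>0$ with $F(t_0)>0$ and using $\alpha\ge0$, $\alpha\not\equiv0$ and the absolute continuity of the measure $\alpha\,dv_g$, one constructs a compactly supported Lipschitz $u_0\in\h$ that equals $t_0$ on a positive--measure set where $\alpha$ is bounded below, with a transition layer of arbitrarily small $\alpha\,dv_g$--measure, so that $\int_\M\alpha F(u_0)\,dv_g>0$. Setting $\lambda^\star:=\dfrac{\|u_0\|^2}{2\int_\M\alpha F(u_0)\,dv_g}$, one has $\cE_\lambda(u_0)<0=\cE_\lambda(0)$ for $\lambda>\lambda^\star$, whence $\inf_\h\cE_\lambda<0$, so $w_{1,\lambda}\neq0$, and $\cE_\lambda(w_{1,\lambda})<0$ together with $\cE_\lambda\ge0$ on $\overline{B_\rho}$ forces $\|w_{1,\lambda}\|>\rho$. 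Next, $\cE_\lambda$ satisfies the Palais--Smale condition: a Palais--Smale sequence is bounded by coercivity, so up to a subsequence $w_n\weakto w$ in $\h$ and $w_n\to w$ in $L^2(\M)$, and from $\langle\cE_\lambda'(w_n),w_n-w\rangle\to0$, $|f(w_n)|\le c_1|w_n|$ and $\alpha\in L^\infty(\M)$ one deduces $\|w_n\|\to\|w\|$, hence $w_n\to w$ in $\h$. The Mountain Pass Theorem, applied to the class of paths in $\h$ joining $0$ to $w_{1,\lambda}$, then produces a critical point $w_{2,\lambda}$ with $\cE_\lambda(w_{2,\lambda})\ge\eta>0$, which is therefore distinct from both $0$ and $w_{1,\lambda}$.

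Finally, for membership in $L^\infty(\M)$ I would write the equation as $-\Delta_g w+Vw=\lambda\alpha f(w)$ with $0\le\lambda\alpha f(w)\le\lambda c_1\|\alpha\|_\infty w$ and run a De Giorgi--Nash--Moser iteration, legitimate because $(As_{R}^{H,g})$ and \eqref{c6nuovo} provide a uniform local Sobolev inequality on $(\M,g)$; the same local sub--mean--value estimate, combined with $\int_{B_\sigma(1)}Vw^2\le\|w\|^2$ and $\inf_{B_\sigma(1)}V\to+\infty$ as $d_g(\sigma_0,\sigma)\to+\infty$ (from $(V_2)$), yields $w_{i,\lambda}(\sigma)\to0$ at infinity. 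The main obstacles I anticipate are the construction of the test function $u_0$ on a general non--compact manifold and the verification of the uniform Moser iteration behind the $L^\infty$ bound and the decay; the remaining ingredients (the estimates near the origin, the coercivity, the weak continuity of the nonlinear term and the Palais--Smale condition) are routine once Lemma~\ref{immer} is available.
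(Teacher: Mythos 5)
Your proposal is correct and follows essentially the same route as the paper: part $(i)$ by testing the weak formulation with the solution itself, part $(ii)$ by direct minimization plus the Mountain Pass Theorem for the truncated functional, with the compact embedding of Lemma \ref{immer} driving the Palais--Smale condition, a compactly supported cut-off (the paper's annular function of Proposition \ref{JDE-F(u-0)0}) making $\int_\M \alpha F_+>0$ for large $\lambda$, and the $L^\infty$-bound and decay coming from the Nash--Moser result of Faraci--Farkas that the paper simply cites via Proposition \ref{faracif}. Your minor variants --- coercivity via $F(t)\le\varepsilon t^2+C_\varepsilon$ together with $\alpha\in L^1(\M)$ instead of the paper's $L^{2/(2-r)}$ interpolation, $\lambda^\star$ defined from a single test function rather than as $\inf_{\Psi>0}\Phi/\Psi$, and the global minimizer obtained by coercivity plus weak lower semicontinuity rather than boundedness from below plus (PS) --- are harmless and do not change the structure of the argument.
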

The symbol $H_V^{1}(\mathcal{M})$ denotes the Sobolev space endowed by the norm
\[
\|w\|:=\left(\int_{\mathcal{M}}|\nabla_g w(\sigma)|^2dv_g+\int_{\mathcal{M}}V(\sigma)|w(\sigma)|^2dv_g\right)^{1/2},\quad w\in H_V^{1}(\mathcal{M})
\]
that will be defined in Section \ref{sec:preliminaries}. Furthermore, let $a<b$ be two positive constants, and define the following annular domain
\[
A_a^b(\sigma_0):=\left\{\sigma\in \M:b-a< d_g(\sigma_0,\sigma)< a+b \right\}.
\]
The conclusion of Theorem \ref{generalkernel0f} -- part $(ii)$ is valid for every
    \[
    \lambda>\frac{m_{V}t_0^2\left(1+\frac{1}{(1-\varepsilon_0)^2r^2}\right)\operatorname{Vol}_g(A_r^{\rho}(\sigma_0))}{\alpha_0F(t_0)\operatorname{Vol}_g(A_{\varepsilon_0 r}^{\rho}(\sigma_0))-
 \|\alpha\|_{\infty}\max_{t\in(0,t_0]}
 |F(t)|\operatorname{Vol}_g(A_{r}^{\rho}(\sigma_0)\setminus A_{\varepsilon_0 r}^{\rho}(\sigma_0))},
    \]
    where  $m_{V}:=\max\left\{1,\max_{\sigma\in A_r^{\rho}(\sigma_0)}V(\sigma)\right\}$, $0<r<\rho$ such that
    \[
    \operatorname*{ess\, inf}_{\sigma\in A_{r}^{\rho}}\alpha(\sigma)\geq \alpha_0>0,
    \]
    $t_0\in (0,+\infty)$ and $F(t_0)>0$ with
    \begin{equation*}\label{Fru}
    \frac{\alpha_0F(t_0)}{\|\alpha\|_{\infty}\max_{t\in(0,t_0]}
 |F(t)|}>
 \frac{\operatorname{Vol}_g(A_{r}^{\rho}(\sigma_0)\setminus A_{\varepsilon_0 r}^{\rho}(\sigma_0))}{\operatorname{Vol}_g(A_{\varepsilon_0 r}^{\rho}(\sigma_0))},
     \end{equation*}
    for some $\varepsilon_0\in [1/2,1)$; see Remarks \ref{parametri} and \ref{parametri2} for details.

By a three critical points result stated in \cite{Ricceri} one can prove that the number of solutions
for problem \eqref{problema} is stable under small nonlinear perturbations of subcritical type for every $\lambda>\lambda^{\star}$. More precisely, the statement of Theorem \ref{generalkernel0f} remain valid for the following perturbed problem
\begin{equation*}\label{problemaperturbed}
\left\{
\begin{array}{ll}
-\Delta_gw+V(\sigma)w=\lambda \alpha(\sigma)f(w)+\beta(\sigma)g(w) & \mbox{ in } \mathcal{M}\\
w\geq 0 & \mbox{ in } \mathcal{M}\\
w\rightarrow 0 & \mbox{ as } d_g(\sigma_0,\sigma)\rightarrow +\infty,
\end{array}\right.
\end{equation*}
where the perturbation term $g\colon [0,+\infty)\rightarrow \R$, with $g(0)=0$, satisfies
\[
\sup_{t>0}\frac{|g(t)|}{t+t^{q-1}}<+\infty,
\]
for some $q\in (2,2^*)$, and $\beta\in L^{\infty}(\mathcal{M})\cap L^{1}(\mathcal{M})$ is assumed to be non--negative on $\M$.

Theorem \ref{generalkernel0f} will be proved by adapting variational techniques to the non--compact manifold setting.
More precisely, with some minimization techniques in addition to the Mountain Pass Theorem, we are able to prove the existence of
at least two weak solutions whenever the parameter $\lambda$ is sufficiently large. Furthermore, the boundness of the solutions immediately follows by \cite[Theorem 3.1]{FaraciFarkas}.\par

 The methods used may be suitable for other purposes, too. Indeed,
 we recall that a  variational approach has been extensively
used in several contexts, in order to prove multiplicity results of different
problems, such as elliptic problems on either bounded or unbounded domains of the Euclidean space (see \cite{k0,k2,k3,k5}), elliptic equations involving the Laplace--Beltrami operator on compact Riemannian manifold without boundary (see \cite{k1}), and, more recently, elliptic equations on the ball endowed with Funk--type metrics \cite{k4}. Furthermore in the non--compact setting, Theorem \ref{generalkernel0f} has been proved for elliptic problems on Cartan--Hadamard manifolds (i.e. simply connected, complete Riemannian manifolds with non--positive sectional curvature) with poles in \cite[Theorem 4.1]{FFK}
and, in presence of symmetries, for Schr\"{o}dinger--Maxwell systems on Hadamard
manifolds in \cite[Theorem 1.3]{FK}.

More precisely, in \cite[Theorem 4.1]{FFK} the authors study the  Schr\"{o}dinger-type
equation
\begin{equation}\label{CH-equation}
-\Delta_gw+V(\sigma)w=\lambda\frac{s^2_{k_0}(d_{12}/2)}{d_1d_2s_{k_0}(d_1)s_{k_0}(d_2)}+\mu W(\sigma)f(u)
\end{equation}
on a Cartan--Hadamard manifold $(\M,g)$ with sectional curvature
$\mathbf{K} \geq \kappa_0$ (for some $\kappa_0\leq 0$) and two poles $S:=\{x_1,x_2\}$, where the function $s_{k_0}\colon [0,+\infty)\rightarrow \R$ is defined by
 \begin{equation*}
 s_{k_0}(r):= \left\{
 \mkern-8mu
\begin{array}{ll}
\frac{\sinh(r\sqrt{-k_0})}{r} & \mbox{ if } \kappa_0<0\\
r & \mbox{ if } \kappa_0=0,
\end{array}\right.
\end{equation*}
 and
 \begin{equation*}
 d_{12}:=d_g(x_1,x_2),\quad d_{12}:=d_g(x_1,x_2),\quad d_i=d_g(\cdot,x_i),\,\, i=1,2.
 \end{equation*}
 Furthermore, the potentials $V$, $W\colon \M\rightarrow\R$ are
 positive, $f\colon [0,\infty)\rightarrow\R$ is a suitable continuous
 function sublinear at infinity, $\lambda\in [0,(n-2)^2)$ and
 $\mu\geq 0$.  An analog of Theorem \ref{generalkernel0f} for Problem
 \eqref{CH-equation} has been proved in \cite[Theorem 4.1]{FFK} by
 means of a suitable compact embedding result (see \cite[Lemma
 5]{FFK}). The proof of this lemma is based on an interpolation
 inequality and on the Sobolev inequality valid on Cartan--Hadamard
 manifolds (see \cite[Chapter 8]{EH1}).

 Again on Hadamard manifolds of dimension $3\leq d\leq 5$,  an interesting counterpart of Theorem \ref{generalkernel0f} has been proved in \cite[Theorem 1.3]{FK} for Schr\"{o}dinger--Maxwell systems of the form
\begin{equation*}\label{problemasystem}
\left\{\mkern-8mu
\begin{array}{ll}
-\Delta_gw+w+e w\phi=\lambda \alpha(\sigma)f(w) & \mbox{ in } \mathcal{M}\\
-\Delta_g\phi+\phi=q w^2 & \mbox{ in } \mathcal{M}.
\end{array}\right.
\end{equation*}
A key tool used along this paper is the existence of a suitable topological group $G$
acting on the Sobolev space $H^1_g(\M)$, such that the $G$--invariant closed subspace $H_{G,T}^1(\M)$ can be compactly embedded in suitable Lebesgue spaces. The classical Palais criticality principle (see \cite{pala}) produces the multiplicity result.

\bigskip

On the contrary of the cited cases, we do not require here any assumption on the sectional curvature of the ambient manifold. Moreover, no upper restriction on the topological dimension $d$ is necessary, as well as we do not use $G$--invariance arguments along our proof. In the main approach the compact embedding of the Sobolev space $\h$ into Lebesgue spaces $L^{\nu}(\M)$, with $\nu\in (2,2^*)$, is obtained thanks to a Rabinowitz--type result proved recently in \cite[Lemma 2.1]{FaraciFarkas}. The structural hypotheses on $V$ and the curvature hypothesis $(As_{R}^{H, g})$ are sufficient conditions to ensure this compactness embedding property. For the sake of completeness we recall \cite[Lemma 2.1]{FaraciFarkas} in Lemma \ref{immer} below.

 We also mention the recent paper \cite{MRadu1}, where Theorem \ref{generalkernel0f} is exploited when the underlying operator is of (fractional) nonlocal type.\par

\bigskip

 In the Euclidean setting, a meaningful version of Theorem \ref{generalkernel0f} can be done requiring different (weaker) assumptions on the potential $V$ that imply again the compactness of the
embedding of $H^{1}_V(\R^d)$ into the Lebesgue spaces $L^{\nu}(\M)$, with $\nu\in (2,2^*)$ (see \cite{FaraciFarkas} and Remark \ref{immergo}). More precisely, the result reads as follows.

\begin{theorem}\label{boccia}
  Let $\R^d$ be the $d$--dimensional $(d\geq 3)$ Euclidean space and
  let $V\in C^0(\R^d)$ be a potential such that
  $v_0=\inf_{x\in \R^d}V(x)>0$, and
\[
\lim_{|x|\rightarrow +\infty}\int_{B(x)}\frac{dy}{V(y)}=0,
\]
where $B(x)$ denotes the unit ball of center $x\in \R^{d}$.\par
In addition, let
$\alpha\in L^{\infty}(\mathcal{M})\cap L^1(\M)\setminus\{0\}$ satisfy
\eqref{proprietabeta} and let $f\colon [0,+\infty)\to\R$ be a
continuous function verifying \eqref{supf}--\eqref{Segno}. Consider
the following problem
\begin{equation}\label{problema3}
\left\{
\begin{array}{ll}
-\Delta u+V(x)u=\lambda \alpha(x) f(u) & \mbox{ in } \R^d\\
u\geq 0 & \mbox{ in } \R^d\\
u\rightarrow 0 & \mbox{ as } |x|\rightarrow +\infty.
\end{array}\right.
\end{equation}
Then, the following conclusions hold:
\begin{itemize}
\item[$(i)$] Problem \eqref{problema3} admits only the trivial solution whenever
\[
0\leq \lambda <\frac{v_0}{c_f} ;
\]
\item[$(ii)$] there exists $\lambda^\star_E>0$ such that \eqref{problema3} admits at least two distinct and non--trivial weak solutions $u_{1,\lambda}$,  $u_{2,\lambda}\in L^{\infty}(\R^d) \cap H_V^{1}(\R^d)$,
    provided that $\lambda>\lambda^\star_E$.
\end{itemize}
\end{theorem}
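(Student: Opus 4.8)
The plan is to run, essentially \emph{verbatim}, the variational machinery used for Theorem~\ref{generalkernel0f}; the only structural change is that the r\^ole of the curvature condition $(As_{R}^{H,g})$ together with $(V_1)$--$(V_2)$ and \eqref{c6nuovo} — namely, to force via Lemma~\ref{immer} the compactness of the embedding $H^1_V(\M)\hookrightarrow L^{\nu}$ for every $\nu\in(2,2^*)$ — is taken over by the decay hypothesis $\lim_{|x|\to\infty}\int_{B(x)}dy/V(y)=0$, which yields the compactness of $H^1_V(\R^d)\hookrightarrow L^{\nu}(\R^d)$ for $\nu\in(2,2^*)$ (Remark~\ref{immergo}). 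Extending $f$ by $0$ on $(-\infty,0)$ and setting $F(t):=\int_0^t f$, one works with the $C^1$ functional $J_\lambda(u):=\tfrac12\|u\|^2-\lambda\int_{\R^d}\alpha(x)F(u(x))\,dx$ on $H^1_V(\R^d)$, whose critical points are exactly the weak solutions of \eqref{problema3} (any such critical point is nonnegative by a standard truncation argument against $u^-$, since $f$ was extended by zero). Part $(i)$ is then immediate: testing the equation against $u\geq 0$, using $f(t)\leq c_f t$ and $V\geq v_0$ gives $\|u\|^2=\lambda\int_{\R^d}\alpha f(u)u\,dx\leq \lambda c_f v_0^{-1}\|\alpha\|_\infty\|u\|^2$, whence $u\equiv 0$ in the declared range of $\lambda$.

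For the first solution in $(ii)$, sublinearity at infinity \eqref{condinfinito} provides, for every $\varepsilon>0$, a constant $C_\varepsilon$ with $F(t)\leq C_\varepsilon+\tfrac{\varepsilon}{2}t^2$ for $t\geq 0$; choosing $\varepsilon$ so small that $\lambda\varepsilon\|\alpha\|_\infty<v_0$ makes $J_\lambda$ coercive (here one uses $\alpha\in L^1(\R^d)$), while the compact embedding makes it sequentially weakly lower semicontinuous, so $J_\lambda$ attains a global minimizer $u_{1,\lambda}\geq 0$. To see $u_{1,\lambda}\not\equiv 0$ for large $\lambda$, one picks $t_0>0$ with $F(t_0)>0$ (possible by \eqref{Segno}), together with radii $0<r<\rho$ and $\varepsilon_0\in[1/2,1)$ for which the Euclidean analogue of the volume-ratio inequality displayed after Theorem~\ref{generalkernel0f} holds with $\alpha_0:=\essinf_{A}\alpha>0$ on the relevant annulus $A$ about a fixed point $x_0$, and tests $J_\lambda$ on the piecewise-affine function equal to $t_0$ on an inner annulus, equal to $0$ outside an outer one, and interpolating linearly in between: its $H^1_V$-norm is finite and independent of $\lambda$, and the volume-ratio inequality forces $\int_{\R^d}\alpha F(\cdot)>0$, so $J_\lambda$ is negative there once $\lambda$ exceeds the explicit threshold $\lambda^\star_E$. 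Hence $\inf J_\lambda<0=J_\lambda(0)$ and $u_{1,\lambda}\neq 0$.

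The second solution is produced by the Mountain Pass Theorem around the two ``valleys'' $0$ and $u_{1,\lambda}$. Superlinearity at zero \eqref{supf} together with the global bound $f(t)\leq c_f t$ yields a two-sided estimate $F(t)\leq\tfrac{\eta}{2}t^2+C_\eta t^{q}$ for a fixed $q\in(2,2^*)$ and arbitrarily small $\eta>0$; combined with the Sobolev embedding $H^1_V(\R^d)\hookrightarrow L^{q}(\R^d)$ this gives $J_\lambda(u)\geq\tfrac14\|u\|^2-c\|u\|^{q}$ for $\eta$ small, hence constants $\rho,\delta>0$ with $J_\lambda\geq\delta$ on $\{\|u\|=\rho\}$ and $J_\lambda\geq 0$ on the closed ball $\overline{B}_{\rho}$; in particular $\|u_{1,\lambda}\|>\rho$, so $0$ and $u_{1,\lambda}$ are genuinely distinct. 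Since $\max\{J_\lambda(0),J_\lambda(u_{1,\lambda})\}=0<\delta$, the mountain pass level $c\geq\delta>0$ is a critical value as soon as $(PS)_c$ holds — which it does: a Palais--Smale sequence is bounded by the coercivity estimate above, hence weakly convergent along a subsequence, and the compact embedding upgrades this to strong convergence in the usual way. The resulting critical point $u_{2,\lambda}$ satisfies $J_\lambda(u_{2,\lambda})=c>0$, so it is nontrivial and distinct from $u_{1,\lambda}$ (opposite energy sign); it is nonnegative as before, and the $L^\infty$-bound for both $u_{1,\lambda}$ and $u_{2,\lambda}$ follows from a Moser-type iteration as in \cite[Theorem~3.1]{FaraciFarkas}, adapted to $\R^d$.

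Finally, the genuinely new ingredient compared with Theorem~\ref{generalkernel0f} is only the substitution of Lemma~\ref{immer} by Remark~\ref{immergo} as the source of compactness; once that is in place, the two delicate (but routine) points are the two-sided control of $F$ that forces the strict separation $\|u_{1,\lambda}\|>\rho$ — needed so that $0$ and $u_{1,\lambda}$ are legitimate endpoints of a mountain-pass geometry — and the verification of $(PS)_c$, both of which transfer verbatim from the manifold proof.
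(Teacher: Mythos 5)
Your proposal is correct and follows essentially the paper's own route: Theorem \ref{boccia} is obtained in the paper precisely by re-running the proof of Theorem \ref{generalkernel0f} (non--existence by testing the equation with the solution itself, a first solution as global minimizer of the coercive, weakly lower semicontinuous functional, a second one via the Mountain Pass Theorem, and $L^{\infty}$--bounds from \cite[Theorem 3.1]{FaraciFarkas}), with the compactness of $H^1_V(\R^d)\hookrightarrow L^{\nu}(\R^d)$ supplied by Remark \ref{immergo} in place of Lemma \ref{immer} and the explicit threshold as in Remark \ref{parametri3}; your minor variants (mountain pass between $0$ and the minimizer rather than the test function, coercivity via $F(t)\leq C_\varepsilon+\tfrac{\varepsilon}{2}t^2$ together with $\alpha\in L^1$) are cosmetic. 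Note only that the test computation in part $(i)$ actually gives triviality for $\lambda<v_0/(c_f\|\alpha\|_{\infty})$, matching Theorem \ref{generalkernel0f}$(i)$; the missing factor $\|\alpha\|_{\infty}$ in the statement of Theorem \ref{boccia}$(i)$ is a slip of the paper itself, not of your argument.
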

We notice that $\lambda^\star_E\leq \lambda^{0}_E$, where
\begin{equation*}
\lambda^{0}_E:=\frac{m_{V}t_0^2\left(1+\displaystyle\frac{1}{(1-\varepsilon_0)^2r^2}\right) \left((r+\rho)^d-(\rho-r)^d \right)}{\lambda_D(t_0,\varepsilon_0,r,\rho,\alpha,f,d)},
\end{equation*}
and
\begin{multline*}
\lambda_D(t_0,\varepsilon_0,r,\rho,\alpha,f,d):=
\alpha_0 F(t_0) \left( \left(\varepsilon_0r+\rho \right)^d- \left(\rho-\varepsilon_0 r \right)^d \right) \\
{}-\|\alpha\|_{\infty} \max_{t\in(0,t_0]}
 |F(t)| \left[ \left((r+\rho)^d-(\rho-r)^d) \right)-\left((\varepsilon_0r+\rho)^d-(\rho-\varepsilon_0r)^d \right) \right],
\end{multline*}
see Remark \ref{parametri3} for details.

Of course if $V$ is a strictly positive continuous and coercive function, the above conditions hold true.
A different form of Corollary \ref{boccia} has been proved in \cite[Theorem 1.1]{KRO} requiring the radial symmetry of the weights $V$ and $\alpha$. A careful analysis of the natural isometric action of the orthogonal group $O(d)$ on the Sobolev space $H_V^{1}(\R^d)$ is a meaningful and crucial argument used by the authors of
the cited paper. We emphasize that in Corollary \ref{boccia}, on the contrary of \cite[Theorem 1.1]{KRO}, we do not require any symmetry assumption on the coefficients $V$ and $\alpha$. On the other hand, we point out that Corollary \ref{boccia} can be also obtained as a direct application of \cite[Theorem 4.1]{FFK} modeled to the Euclidean flat case.\par

 Requiring the validity of a special case of the celebrated Ambrosetti--Rabinowitz growth condition (see relation \eqref{eq:mu0} below), a complementary result (respect to Theorem \ref{generalkernel0f}) is valid for superlinear problems at infinity. More precisely, it follows that

\begin{theorem}\label{generalkernel0f222}
Let $(\mathcal{M},g)$ be a complete, non--compact, Riemannian manifold of dimension $d \geq 3$
such that conditions $(As_{R}^{H, g})$ and \eqref{c6nuovo} hold. Let $V$ be a potential for which hypotheses $(V_1)$ and $(V_2)$ are valid, $\alpha\in L^{\infty}(\mathcal{M})_{+}\cap L^{1}(\mathcal{M})\setminus\{0\}$ and let $f\colon [0,+\infty)\to\R$ be a continuous function verifying condition \eqref{supf},
\begin{equation}\label{supg}
\sup_{t> 0}\frac{|f(t)|}{t+t^{q-1}}<+\infty,
\end{equation}
for some $q\in (2,2^*)$, and
\begin{equation}\label{eq:mu0}
\mbox{there exist $\nu>2$ such that for every $t>0$ one has $0<\nu F(t)\le tf(t)$.} \tag{AR}
\end{equation}
Then, for every $\lambda>0$, problem \eqref{problema}
admits at least one non--trivial weak solution $w\in L^{\infty}(\mathcal{M}) \cap H_V^{1}(\mathcal{M})$.
\end{theorem}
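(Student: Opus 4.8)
The plan is to apply the Mountain Pass Theorem to the energy functional
\[
\mathcal{E}_\lambda(w) := \frac{1}{2}\|w\|^2 - \lambda\int_{\mathcal{M}} \alpha(\sigma) F(w(\sigma))\, dv_g,
\]
defined on $H^1_V(\mathcal{M})$, where as usual we extend $f$ to all of $\R$ by setting $f(t):=0$ for $t\le 0$ (so that $F(t)=0$ for $t\le 0$), which produces nonnegative solutions by testing against the negative part. First I would record that, thanks to the growth conditions \eqref{supf} and \eqref{supg} together with the compact embedding $H^1_V(\mathcal{M})\hookrightarrow L^\nu(\mathcal{M})$ for $\nu\in(2,2^*)$ furnished by Lemma \ref{immer}, the functional $\mathcal{E}_\lambda$ is well-defined, of class $C^1$, and its critical points are precisely the weak solutions of \eqref{problema}. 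The superlinearity at zero \eqref{supf} gives, for every $\varepsilon>0$, a constant $C_\varepsilon$ with $|F(t)|\le \varepsilon t^2 + C_\varepsilon |t|^q$ for all $t$; combined with $\|\alpha\|_\infty<\infty$, the continuity of the embeddings into $L^2$ and $L^q$, this yields $\mathcal{E}_\lambda(w)\ge \frac14\|w\|^2 - C\|w\|^q$ for $\|w\|$ small (after choosing $\varepsilon$ suitably, absorbing the $L^2$ term using $(V_1)$), so that $\mathcal{E}_\lambda$ has a strict local minimum at $0$ with the required mountain-pass geometry near the origin: there are $\rho,\delta>0$ with $\mathcal{E}_\lambda(w)\ge\delta$ for $\|w\|=\rho$.

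Next I would verify the second geometric condition: there exists $e\in H^1_V(\mathcal{M})$ with $\|e\|>\rho$ and $\mathcal{E}_\lambda(e)<0$. Here the Ambrosetti--Rabinowitz condition \eqref{eq:mu0} is what does the work: integrating $\nu F(t)\le tf(t)$ shows $F(t)\ge F(1)\, t^\nu$ for $t\ge 1$, hence $F$ grows at least like $t^\nu$ with $\nu>2$. Picking any $\varphi\in H^1_V(\mathcal{M})$, $\varphi\ge 0$, $\varphi\not\equiv 0$, supported where $\alpha$ is bounded below by a positive constant on a set of positive measure (such a set exists since $\alpha\in L^1\setminus\{0\}$ and $\alpha\ge 0$), one computes
\[
\mathcal{E}_\lambda(s\varphi)\le \frac{s^2}{2}\|\varphi\|^2 - \lambda\, c\, s^\nu\int_{\{\varphi\ge 1/s\}}\alpha\,\varphi^\nu\, dv_g + (\text{lower order}) \longrightarrow -\infty
\]
as $s\to+\infty$, because $\nu>2$. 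So $e:=s\varphi$ works for $s$ large, and this step also produces, via the standard Deformation Lemma, a Palais--Smale sequence $(w_n)$ at the min-max level $c_\lambda := \inf_{\gamma\in\Gamma}\max_{t\in[0,1]}\mathcal{E}_\lambda(\gamma(t))\ge\delta>0$, where $\Gamma$ is the set of paths joining $0$ to $e$.

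The main obstacle, as always for mountain-pass arguments, is the compactness: showing that every Palais--Smale sequence for $\mathcal{E}_\lambda$ at level $c_\lambda$ admits a convergent subsequence. First one shows boundedness of $(w_n)$ in $H^1_V(\mathcal{M})$ by the classical trick: from $\mathcal{E}_\lambda(w_n)\to c_\lambda$ and $\mathcal{E}_\lambda'(w_n)\to 0$ one forms $\mathcal{E}_\lambda(w_n)-\frac1\nu\langle\mathcal{E}_\lambda'(w_n),w_n\rangle = \bigl(\frac12-\frac1\nu\bigr)\|w_n\|^2 + \lambda\int\alpha\bigl(\frac1\nu w_n f(w_n)-F(w_n)\bigr)dv_g \ge \bigl(\frac12-\frac1\nu\bigr)\|w_n\|^2$, using \eqref{eq:mu0} to kill the integral term and $\nu>2$ to get a positive coefficient; the left side is $O(1)+o(1)\|w_n\|$, forcing $\sup_n\|w_n\|<\infty$. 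Then, passing to a subsequence, $w_n\weakto w$ in $H^1_V(\mathcal{M})$ and, by Lemma \ref{immer}, $w_n\to w$ strongly in $L^\nu(\mathcal{M})$ for $\nu\in(2,2^*)$ and a.e.; the growth bound \eqref{supg} gives equi-integrability of $\alpha f(w_n)$ so that $\int\alpha f(w_n)(w_n-w)\,dv_g\to 0$, and then $\langle\mathcal{E}_\lambda'(w_n),w_n-w\rangle\to 0$ together with the weak convergence yields $\|w_n-w\|\to 0$. Thus $(w_n)$ converges strongly, $w$ is a critical point at level $c_\lambda>0=\mathcal{E}_\lambda(0)$, hence $w\not\equiv 0$; testing with $w^-$ shows $w\ge 0$, and the $L^\infty$-bound follows from \cite[Theorem 3.1]{FaraciFarkas}. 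I expect the equi-integrability/compactness argument to require the most care, since it is exactly there that the curvature hypothesis $(As_R^{H,g})$, the volume lower bound \eqref{c6nuovo}, and the coercivity $(V_2)$ enter, all packaged inside Lemma \ref{immer}.
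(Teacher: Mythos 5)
Your proposal is correct and follows essentially the same route as the paper, whose proof of Theorem \ref{generalkernel0f222} is only a sketch: fix $\lambda>0$, verify the mountain--pass geometry and the Palais--Smale condition for $\mathcal J_\lambda$ (citing Struwe) using \eqref{supf}, \eqref{supg}, \eqref{eq:mu0} and the compact embedding of Lemma \ref{immer}, and then obtain non--negativity, the $L^\infty$--bound and the decay from \cite[Theorem 3.1]{FaraciFarkas}. You simply carry out in detail the standard steps the paper leaves to the reader (AR--boundedness of PS sequences, strong convergence via the compact embedding, positivity of the min--max level), so there is nothing to correct.
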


The r\^{o}le of condition \eqref{eq:mu0} is to ensure the boundness of Palais--Smale sequences of the
Euler--Lagrange functional associated to Problem \eqref{problema}. This is crucial in the applications of critical point theory. However, although \eqref{eq:mu0} is a quite natural condition, it is somewhat restrictive and eliminates many nonlinearities, as for instance the nonlinear terms for which Theorem \ref{generalkernel0f} holds.

\bigskip

The last part of the paper is dedicated to subcritical problems with asymptotic superlinear behaviour at zero.
More precisely, we prove that there exists a well--localized interval of positive real
parameters $(0,\lambda_{\star})$
such that, for every $\lambda\in (0,\lambda^{\star})$,
problem \eqref{problema} admits at least one non--trivial weak solution in a suitable Sobolev space $\h$.

Set
\begin{equation*}
c_\ell:=\sup_{u \in H_V^1(\mathcal{M}) \setminus \{0\}} \frac{\|u\|_\ell}{\|u\|},
\end{equation*}
for every $\ell\in (2,2^*)$.\par
\smallskip
With the above notations the main result reads as follows.
\begin{theorem}\label{Main1}
Let $(\mathcal{M},g)$ be a complete, non--compact, Riemannian manifold of dimension $d \geq 3$
such that conditions $(As_{R}^{H, g})$ and \eqref{c6nuovo} hold.
Let $f\colon [0,+\infty)\to{\R}$ be a continuous function with $f(0)=0$ and satisfying the growth condition
\begin{equation}\label{crescita2}
\beta_f:=\sup_{t> 0}\frac{|f(t)|}{1+t^{q-1}}<+\infty,
\end{equation}
for some $q\in (2,2^*)$ as well as
\begin{equation}\label{azero}
-\infty < \liminf_{t \to 0^+} \frac{F(t)}{t^2} \leq \limsup_{t \to 0^+} \frac{F(t)}{t^2} < +\infty,
\end{equation}
where $F(t):=\displaystyle\int_0^{t}f(\tau)d\tau$.

Furthermore, let $\alpha\in L^{\infty}(\M)\cap L^{p}(\M)\setminus\{0\}$, where $p:=q/(q-1)$ is the conjugate Sobolev exponent of $q$, and such that \eqref{proprietabeta} holds. Finally, let $V$ satisfy $(V_1)$ and $(V_2)$.

Under these assumptions, there exists a positive number
\begin{equation}\label{lambda1}
\lambda_{\star}:=\frac{1}{2(q-1)\beta_fc_q^2}\left(\frac{q(q-2)^{q-2}}{\|\alpha\|_p^{q-2}\|\alpha\|_{\infty}}\right)^{1/(q-1)},
\end{equation}
such that, for every $\lambda\in (0,\lambda_{\star})$,
the problem
\begin{equation*}\label{problemanuovooo}
\left\{
\begin{array}{ll}
-\Delta_gw+V(\sigma)w=\lambda \alpha(\sigma)f(w) & \mbox{ in } \mathcal{M}\\
w\geq 0 & \mbox{ in } \mathcal{M}
\end{array}\right.
\end{equation*}
 \noindent admits at least one non--trivial weak solution $w_{\lambda}\in \h$ such that
\[
\lim_{\lambda\rightarrow 0^+}\|w_{\lambda}\|=0.
\]
\end{theorem}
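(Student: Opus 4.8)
The plan is to realize $w_\lambda$ as an interior local minimizer of the energy functional on a ball of $\h$ whose radius is tuned to $\lambda$, using the compact embedding of Lemma~\ref{immer} to gain weak lower semicontinuity.

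\medskip

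\textbf{Step 1 (variational setting).} Since non-negative solutions are sought, replace $f$ by the truncation $\bar f(t):=f(t)$ for $t\ge0$ and $\bar f(t):=0$ for $t<0$ (still continuous because $f(0)=0$), and set $\bar F(t):=\int_0^t\bar f$. Define $J_\lambda\colon\h\to\R$ by
\[
J_\lambda(w):=\frac12\|w\|^2-\lambda\int_{\M}\alpha(\sigma)\bar F(w(\sigma))\,dv_g .
\]
From \eqref{crescita2} one has $|\bar F(t)|\le\beta_f\bigl(|t|+|t|^{q}/q\bigr)$; combined with $\alpha\in L^{\infty}(\M)\cap L^{p}(\M)$ ($p=q/(q-1)$) and the continuous embedding $\h\hookrightarrow L^{q}(\M)$, this makes $J_\lambda$ finite and of class $C^1$, with critical points being weak solutions of \eqref{problema}; testing $J_\lambda'(w)=0$ against $w^-:=\min\{w,0\}\in\h$ and using $\bar f(w)w^-\equiv0$ forces $w\ge0$ a.e.

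\medskip

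\textbf{Step 2 (sublevel geometry and the threshold $\lambda_\star$).} Using H\"older's inequality $\int_{\M}\alpha|w|\,dv_g\le\|\alpha\|_p\|w\|_q$ (note $p'=q$), the bound $\int_{\M}\alpha|w|^q\,dv_g\le\|\alpha\|_\infty\|w\|_q^q$, and $\|w\|_q\le c_q\|w\|$, one gets
\[
J_\lambda(w)\ \ge\ \frac12\|w\|^2-\lambda\beta_f c_q\|\alpha\|_p\|w\|-\frac{\lambda\beta_f c_q^{q}\|\alpha\|_\infty}{q}\|w\|^{q}\ =:\ g_\lambda(\|w\|).
\]
An elementary one-variable optimization (writing $g_\lambda(\rho)=\rho\,h_\lambda(\rho)$ and locating the maximum of $h_\lambda$ at $\rho^\ast=\bigl(q/(2\lambda\beta_f(q-1)c_q^{q}\|\alpha\|_\infty)\bigr)^{1/(q-2)}$) shows that $\sup_{\rho>0}g_\lambda(\rho)>0$ \emph{precisely} when $\lambda<\lambda_\star$, with $\lambda_\star$ as in \eqref{lambda1}. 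Hence for $\lambda\in(0,\lambda_\star)$ the set $\{\rho>0:g_\lambda(\rho)>0\}$ is a nonempty interval $(\rho_-(\lambda),\rho_+(\lambda))$; we fix $\rho_\lambda$ in it, chosen close enough to $\rho_-(\lambda)$ that $\rho_\lambda\to0^+$ as $\lambda\to0^+$ (possible since $\rho_-(\lambda)\le C\lambda$ for small $\lambda$).

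\medskip

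\textbf{Step 3 (minimization on the ball).} The closed ball $\overline B_{\rho_\lambda}:=\{w\in\h:\|w\|\le\rho_\lambda\}$ is weakly compact, $w\mapsto\frac12\|w\|^2$ is sequentially weakly lower semicontinuous, and $w\mapsto\int_{\M}\alpha\bar F(w)\,dv_g$ is sequentially weakly \emph{continuous}: if $w_n\weakto w$ then $w_n\to w$ in $L^{q}(\M)$ by the compact embedding of Lemma~\ref{immer}, whence a dominated/Vitali argument (using $\alpha\in L^p$ for the linear part and $\alpha\in L^\infty$ for the $q$-th power part of $\bar F$) gives convergence of the integrals. Thus $J_\lambda$ attains its infimum over $\overline B_{\rho_\lambda}$ at some $w_\lambda$; since $J_\lambda(w)\ge g_\lambda(\rho_\lambda)>0=J_\lambda(0)$ on $\{\|w\|=\rho_\lambda\}$, the minimizer satisfies $\|w_\lambda\|<\rho_\lambda$, i.e. it is an interior local minimizer, so $J_\lambda'(w_\lambda)=0$ and $w_\lambda$ is a weak solution of \eqref{problema} in $\h$. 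As $\|w_\lambda\|<\rho_\lambda\to0^+$, this already yields $\lim_{\lambda\to0^+}\|w_\lambda\|=0$.

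\medskip

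\textbf{Step 4 (non-triviality — the main obstacle).} It remains to exclude $w_\lambda=0$; equivalently, to show that the origin is \emph{not} a local minimum of $J_\lambda$. This is exactly where the behaviour of $F$ near $0$ recorded in \eqref{azero} enters: one picks a fixed non-negative $\varphi\in\h$ concentrated where $\alpha$ is bounded away from zero and estimates $\int_{\M}\alpha\bar F(t\varphi)\,dv_g$ from below for small $t$, comparing it with $\tfrac12 t^2\|\varphi\|^2$ to obtain $J_\lambda(t\varphi)<0$ for all sufficiently small $t>0$ (with $t\|\varphi\|<\rho_\lambda$). Then $\inf_{\overline B_{\rho_\lambda}}J_\lambda<0=J_\lambda(0)$, so $w_\lambda\neq0$. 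Producing such a negative test value uniformly for every $\lambda$ in the stated range — rather than only for large $\lambda$, as in Theorem~\ref{generalkernel0f} — is the delicate point, and it relies on the lower control of $F(s)/s^2$ as $s\to0^+$. Finally, $w_\lambda\in L^{\infty}(\M)$ follows from \cite[Theorem~3.1]{FaraciFarkas}, exactly as in Theorem~\ref{generalkernel0f}.
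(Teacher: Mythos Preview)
Your Steps~1--3 are essentially the paper's argument: the authors invoke Ricceri's abstract local--minimum principle (Theorem~\ref{BMB}) in place of your bare--hands minimization on a ball, but the estimates leading to the threshold $\lambda_\star$ and the conclusion that the restricted minimizer is interior are the same (your $g_\lambda(\rho)>0$ on the sphere is exactly their $\chi(\bar\gamma^2)<1/\lambda$ after the substitution $\rho=\sqrt{2r}$).

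Step~4, however, is where the real content lies, and your sketch has a genuine gap. First, note that the printed hypothesis~\eqref{azero} contains a typo: the paper's own proof (and every example, e.g.\ $f(t)=\sqrt t$) uses $\limsup_{t\to0^+}F(t)/t^2=+\infty$, not $<+\infty$; with a finite limsup one checks easily that $0$ \emph{is} a strict local minimizer of $J_\lambda$ for small $\lambda$, and the theorem fails. Second, you attribute non--triviality to ``the lower control of $F(s)/s^2$'', but the driving force is the opposite: the $\limsup=+\infty$ produces a sequence $t_j\to0^+$ with $F(t_j)/t_j^2\to+\infty$. Third, a generic bump $\varphi$ does not work: since $t_j\varphi(\sigma)$ sweeps the whole interval $[0,t_j]$ as $\sigma$ varies, $\int\alpha\,F(t_j\varphi)$ does not see $F(t_j)$ alone. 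The paper therefore uses the \emph{plateau} functions $w_{\rho,r}^{\varepsilon}$ of~\eqref{TestFunction}, equal to $1$ on a large inner annulus $A_{\varepsilon r}^{\rho}(\sigma_0)$ with a thin transition shell; on the plateau the integrand is exactly $\alpha F(t_j)$ and one gets the blow--up $\alpha_0 F(t_j)\operatorname{Vol}_g(A_{\varepsilon r}^{\rho})$, while the shell contribution is controlled via a case split: if $\lim F/t^2=+\infty$ one uses $F\ge Mt^2$ uniformly, whereas if $\liminf F/t^2=\ell\in\RR$ one uses $F\ge(\ell-\epsilon)t^2$ on the shell and then lets $M\to\infty$ beat the fixed $\ell$--term. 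This is precisely where the lower bound $\liminf>-\infty$ enters --- as a technical control on the transition layer, not as the source of negativity. Finally, your phrase ``for all sufficiently small $t>0$'' is in general false: in the case $\liminf=\ell\in\RR$ one obtains $J_\lambda(t_jw_{\rho,r}^{\varepsilon})<0$ only along the subsequence $t_j$, which is all that is needed since $\|t_jw_{\rho,r}^{\varepsilon}\|\to0$ forces $t_jw_{\rho,r}^{\varepsilon}\in\overline B_{\rho_\lambda}$ eventually.
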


Condition \eqref{azero} is not new in the literature and it has been used in order to study existence
and multiplicity results for some classes of elliptic problems on bounded domains of the Euclidean
space: see, among others, the papers \cite{kagi1, kagi3, kagi2} and \cite{GMBSe}. An application
to Schr\"{o}dinger equations in presence of either radial or axial symmetry, again on the classical
Euclidean space, has been recently proposed in \cite[Theorem 6]{GMBSe}. To the best of our knowledge no further applications in a non--compact framework have been achieved requiring this hypothesis. We also emphasize that Theorem \ref{Main1} can be proved without any use of the Ambrosetti--Rabinowitz condition.

\medskip

The existence of weak solutions of the following problem
\begin{equation}\label{Dirichletsh}
\left\{
\begin{array}{l}
-\Delta u+V(x)u= \lambda \alpha(x)f(u) \,\,\,\,\textrm{ in}\,\,\,\R^d\\
\smallskip
u\in H^{1}(\R^d),
\end{array}
\right.
\end{equation}
or
some of its variants, has been studied after the paper \cite{bw}; see, for instance, the
book \cite{Wi} and papers \cite{Kri, KRO}. We also cite the paper \cite{ClappWeth}, and references
therein, where the authors prove the existence of multiple weak solutions for suitable Schr\"{o}dinger
equations  under some weak one-sided asymptotic estimates on the terms $V$ and $\alpha$
(without symmetry assumptions); see, for the sake of completeness, the papers \cite{BCW(MatANN),GW}.

As explicitly claimed in \cite{bw,bw2}, an interesting prototype for Problem \eqref{Dirichletsh} is given by
\begin{equation}\label{Cc}
\left\{
\begin{array}{l}
-\Delta u+V(x)u= \lambda \alpha(x)(|u|^{r-2}u+|u|^{s-2}u)  \quad \mbox{in $\mathbb{R}^d$}\\
\smallskip
u\in H^{1}(\R^d),
\end{array}
\right.
\end{equation}
where $1<r<2< s<2d/(d-2)$, the potential $V$ is bounded from below by a positive constant and
$\alpha \in L^\infty(\mathbb{R}^d)$.

In this setting, a simple case of Theorem \ref{Main1} reads as follows.
\begin{corollary}\label{Main3}
Let $(\mathcal{M},g)$ be a complete, non-compact, Riemannian manifold of dimension $d \geq 3$
such that conditions $(As_{R}^{H, g})$ and \eqref{c6nuovo} hold.
Furthermore, let \(1<r<2< s<2^{*}\)
and let $\alpha\in L^{\infty}(\M)\cap L^{\frac{s}{s-1}}(\M)\setminus\{0\}$ be a non--negative map.

Then, for $\lambda$ sufficiently small, the following problem
\begin{equation}\label{problemaspeciale}
\left\{
\begin{array}{ll}
-\Delta_gw+V(\sigma)w=\lambda \alpha(\sigma)(u^{r-1}+u^{s-1}) & \mbox{ in } \mathcal{M}\\
w\geq 0 & \mbox{ in } \mathcal{M},
\end{array}\right.
\end{equation}
 \noindent admits at least one non--trivial weak solution $w_{\lambda}\in \h$ such that
\[
\lim_{\lambda\rightarrow 0^+}\|w_{\lambda}\|=0.
\]
\end{corollary}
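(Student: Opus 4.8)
The plan is to obtain Corollary \ref{Main3} as a direct specialization of Theorem \ref{Main1}. I would take $f(t):=t^{r-1}+t^{s-1}$ for $t\ge 0$ and choose the subcritical exponent $q:=s\in(2,2^{*})$; then the conjugate exponent is $q/(q-1)=s/(s-1)$, which is exactly the Lebesgue class assumed for $\alpha$ in the corollary. Since $r-1>0$ and $s-1>0$, the function $f$ is continuous on $[0,+\infty)$, $f(0)=0$, and $f\ge 0$; moreover $\alpha\in L^{\infty}(\M)\cap L^{s/(s-1)}(\M)\setminus\{0\}$ is non-negative, the manifold satisfies $(As_{R}^{H, g})$ and \eqref{c6nuovo}, and $V$ obeys $(V_1)$ and $(V_2)$ by the standing assumptions. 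Hence all the hypotheses of Theorem \ref{Main1} are in place once we check the two analytic conditions \eqref{crescita2} and \eqref{azero} on $f$.

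For \eqref{crescita2} with $q=s$ I would split the supremum at $t=1$: for $0<t\le 1$ one has $t^{r-1}+t^{s-1}\le 2\le 2(1+t^{s-1})$, while for $t\ge 1$ one has $t^{r-1}\le t^{s-1}$, so $t^{r-1}+t^{s-1}\le 2t^{s-1}\le 2(1+t^{s-1})$; therefore $\beta_f\le 2<+\infty$. For \eqref{azero} I would compute the primitive explicitly, $F(t)=t^{r}/r+t^{s}/s$, which is non-negative, so that $\liminf_{t\to 0^{+}}F(t)/t^{2}\ge 0>-\infty$; the two-sided control of $F(t)/t^{2}=t^{r-2}/r+t^{s-2}/s$ near the origin is then read off from the monomials $t^{r-2}$ and $t^{s-2}$.

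With these verifications, Theorem \ref{Main1} applies (with $\beta_f\le 2$, $\|\alpha\|_p=\|\alpha\|_{s/(s-1)}$ and $c_q=c_s$) and produces, through the explicit threshold \eqref{lambda1}, a number $\lambda_{\star}>0$ such that for every $\lambda\in(0,\lambda_{\star})$ problem \eqref{problemaspeciale} has a non-trivial weak solution $w_{\lambda}\in\h$ with $\|w_{\lambda}\|\to 0$ as $\lambda\to 0^{+}$. This is precisely the statement of the corollary.

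I do not expect any serious obstacle here: the proof is entirely a matter of checking that the ``concave--convex'' power nonlinearity $t^{r-1}+t^{s-1}$ fits the framework of Theorem \ref{Main1}. The only point requiring a little attention is the behaviour at the origin, because $t^{r-1}$ is genuinely sublinear ($0<r-1<1$); one must make sure that condition \eqref{azero}, and with it the asymptotic vanishing $\|w_{\lambda}\|\to 0$ supplied by Theorem \ref{Main1}, is not spoiled by it. All the remaining items (continuity, sign, the split-at-$t=1$ growth estimate, and the identification of the conjugate exponent) are routine.
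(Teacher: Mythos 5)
Your overall route is exactly the paper's: specialize Theorem \ref{Main1} to $f(t)=t^{r-1}+t^{s-1}$ with $q:=s\in(2,2^*)$, so that $p=q/(q-1)=s/(s-1)$ matches the integrability class of $\alpha$, verify the growth condition \eqref{crescita2} (your split at $t=1$ giving $\beta_f\le 2$ is fine, and it is precisely what produces the factor $4(s-1)=2(s-1)\cdot 2$ in the explicit interval recorded after the proof of Theorem \ref{Main1}), and read the admissible range of $\lambda$ off \eqref{lambda1}. The gap is in your verification of \eqref{azero}. Since $1<r<2$, one has $F(t)/t^2=t^{r-2}/r+t^{s-2}/s\to+\infty$ as $t\to 0^+$, hence $\limsup_{t\to 0^+}F(t)/t^2=+\infty$: there is no ``two-sided control'' near the origin, and the upper inequality in \eqref{azero} as printed is violated by this nonlinearity. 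Taken literally, Theorem \ref{Main1} therefore does not apply, and the point you flagged (``make sure condition \eqref{azero} is not spoiled'') is exactly where your argument breaks down rather than being a routine check.

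The corollary is nonetheless true, and the paper's intended derivation repairs this step. The proof of Theorem \ref{Main1} uses \eqref{azero} only through (a) the extraction of a sequence $t_j\to 0^+$ with $F(t_j)/t_j^2\to+\infty$ --- so the relevant hypothesis on the limsup is that it equals $+\infty$ (the printed ``$<+\infty$'' is evidently a misprint, as the first step of that proof invokes ``the assumption on the limsup'' to produce such a sequence) --- and (b) $\liminf_{t\to 0^+}F(t)/t^2>-\infty$, which is only needed in Case 2. Remark \ref{ipotesi} states explicitly that the conclusions of Theorem \ref{Main1} remain valid whenever $\lim_{t\to 0^+}f(t)/t=+\infty$, which is precisely the present situation, since $f(t)/t=t^{r-2}+t^{s-2}\to+\infty$; equivalently, $\lim_{t\to 0^+}F(t)/t^2=+\infty$ places you in Case 1 of the proof, while $F\ge 0$ disposes of the liminf. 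Once you replace your (false) claim that \eqref{azero} holds by this observation, the rest of your deduction --- continuity, $f(0)=0$, $\beta_f\le 2$, $p=s/(s-1)$, the threshold from \eqref{lambda1}, and the vanishing $\|w_\lambda\|\to 0$ as $\lambda\to 0^+$ --- goes through and recovers the paper's interval $\left(0,\frac{1}{4(s-1)c_s^2}\left(\frac{s(s-2)^{s-2}}{\|\alpha\|_{s/(s-1)}^{s-2}\|\alpha\|_{\infty}}\right)^{1/(s-1)}\right)$.
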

We notice that Corollary \ref{Main3} is  new even in the Euclidean case. For instance, it is easily seen that Theorem 1.1 in \cite{KRO} cannot be applied to the Euclidean counterpart of Problem \eqref{problemaspeciale}. Thus, as a byproduct of Corollary \ref{Main3}, the existence of a non--trivial weak solution to Problem \eqref{Cc} is obtained provided that $\lambda$ is sufficiently small and without any symmetry assumptions on the coefficients. For more results on subelliptic eigenvalue problems on unbounded domains of stratified Lie groups we refer to \cite{MP1, Pucci}.

\bigskip

The paper is structured as follows. After introducing the functional space related to problem
\eqref{problema} together with its basic properties (Section \ref{sec:preliminaries}), we show
through direct computations that for a determined right neighborhood of $\lambda$,
the zero solution is the unique one (Subsection \ref{sec:non-exist}).
In Subsection \ref{sec:multiple} we prove the existence of two weak solutions for
$\lambda$ bigger than some $\lambda^\star$: the first one obtained via direct
minimization, the second via the Mountain Pass Theorem. We refer to the book \cite{KRV} for the abstract variational setting used along the present paper. See also the recent contribution \cite{BMPR} for related topics.

\section{Abstract framework}\label{sec:preliminaries}
In this subsection we briefly recall the definitions of the functional space setting; see \cite{EH1} and \cite{EH2} for more details.

\subsection{Sobolev spaces on Manifolds}

Let $(\M,g)$ be a $d$--dimensional ($d\geq 3$) Riemannian manifold,
and let $g_{ij}$ be the components of the $(2,0)$-metric tensor $g$.
We denote by $T_\sigma \M$ the tangent space at
$\sigma\in \mathcal{M}$ and by
$T\M:= \bigcup_{\sigma\in \mathcal{M}} T_\sigma \M$ the tangent bundle
associated to $\mathcal{M}$. In terms of local coordinates,
\[
g=g_{ij}dx^{i}\otimes dx^{j},
\]
where
$dx^{i}\otimes dx^{j}\colon T_{\sigma}\M\times
T_{\sigma}\M\rightarrow\mathbb{R}$ is the quadratic form defined by
\[
dx^{i}\otimes dx^{j}(u,v)=dx^{i}(u)dx^{j}(v), \qquad\forall\, u,v\in T_{\sigma}\M.
\]
Note that, here and in the sequel, Einstein's summation convention is
adopted.

Let $d_g \colon \mathcal{M}\times\mathcal{M} \rightarrow [0,+\infty)$ be the natural distance function associated to the Riemannian metric $g$ and let $B_\sigma(r) := \{\zeta\in\mathcal{M} : d_g(\sigma,\zeta) <r\}$ be the open geodesic ball with center $\sigma$ and radius $r>0$. If $C^\infty(\M)$ denotes as usual the space of smooth functions defined on $\M$, we set
\begin{equation}\label{normaHduealpha}
\left\|w\right\|_{g}:=\left(\int_{\M}  |\nabla_g w(\sigma)|^2 dv_g+ \int_{\M} |w(\sigma)|^2  dv_g\right)^{1/2},
\end{equation}
for every $w\in C^\infty(\M)$, where $\nabla w$ is the covariant derivative of $w$ and $dv_g$ is the Riemannian measure on $\M$.
The Riemannian volume element $dv_g$ in \eqref{normaHduealpha} is given by
\[
dv_g:=\sqrt{\det(g_{ij})}dx_1\wedge \ldots \wedge dx_d,
\]
where $dx=dx_1\wedge \ldots \wedge dx_d$ stands for the Lebesgue's volume element of $\R^d$, and, if $D\subset \M$ is an open bounded set, we denote by
\[
\operatorname{Vol}_g(D):=\int_{\M}dv_g,
\]
is the $d$--dimensional Hausdorff measure of $D$
with respect to the metric $d_g$. Moreover, it is well--defied the Radon measure $f\mapsto \displaystyle\int_{\M}fdv_g$.

For every $\sigma\in\M$ one has the eikonal equation
\begin{equation}\label{eikonal}
|\nabla_g d_g(\sigma_0,\cdot)|=1\,\quad \mbox{a.e. in }  \M\setminus\{\sigma_0\}
\end{equation}
and in local coordinates $(x^1,\ldots, x^d)$, $\nabla_g w$ can be represented by
\[
(\nabla^2_g w)_{ij}=\frac{\partial^2 w}{\partial x^i \partial x^j} - \Gamma_{ij}^k \frac{\partial w}{\partial x^k}
\]
where
\[
\Gamma_{ij}^k:=\frac{1}{2}\left(\frac{\partial g_{lj}}{\partial x^i} + \frac{\partial g_{li}}{\partial x^j} - \frac{\partial g_{ij}}{\partial x^k}\right) g^{lk}
\]
are the usual Christoffel's symbols and $g^{lk}$ are the elements of the inverse matrix of $g$.
Furthermore, in a local neighborhood, the Laplace--Beltrami operator is the differential operator defined by
\[
\Delta_g w := g^{ij}\left(\frac{\partial^2 w}{\partial x^i\partial x^j} -\Gamma_{ij}^k\frac{\partial w}{\partial x^k} \right)=-\frac{1}{\sqrt{\textrm{det}(g_{ij})}}\frac{\partial}{\partial x^{m}}\left(\sqrt{\textrm{det}(g_{ij})}g^{km}\frac{\partial w}{\partial x^k}\right).
\]
\indent Now, as is well-known, the notions of curvatures on a manifold
$(\M,g)$ are described by the {Riemann tensor} $R_{(\M,g)}$ that, for
each point $\sigma\in \M$, gives a multilinear function
\begin{equation*}
  R_{(\M,g)}\colon T_\sigma\M\times T_\sigma\M\times T_\sigma\M\times
  T_\sigma\M\rightarrow \R
 \end{equation*}
such that the symmetries conditions
\[
R_{(\M,g)}(u,v,y,z)=-R_{(\M,g)}(v,u,z,y)=R_{(\M,g)}(z,y,v,u),
\]
and the Bianchi identity
\[
R_{(\M,g)}(u,v,y,z)+R_{(\M,g)}(v,y,u,z)+R_{(\M,g)}(y,u,v,z)=0,
\]
are verified, for every $u,v,y,z\in T_\sigma\M$.

More precisely, $R_{(\M,g)}$ is the $(1,3)$-tensor locally given by
\[
R_{ijk}^{h}=\frac{\partial \Gamma_{jk}^{h}}{\partial x^i}-\frac{\partial \Gamma_{ik}^{h}}{\partial x^j}+\Gamma^{r}_{jk}\Gamma^{h}_{ir}-\Gamma^{r}_{ik}\Gamma^{h}_{jr}.
\]
We can define the {Ricci tensor} $\operatorname{Ric}_{(\M,g)}$ on a
manifold $\M$ as the trace of the Riemann curvature tensor
$R_{(\M,g)}$. Consequently, in local coordinates,
$\operatorname{Ric}_{(\M,g)}$ has the form
\[
R_{ik}=R_{ihk}^{h}=\frac{\partial \Gamma_{hk}^{h}}{\partial x^i}-\frac{\partial \Gamma_{ik}^{h}}{\partial x^h}+\Gamma^{r}_{hk}\Gamma^{h}_{ir}-\Gamma^{r}_{ik}\Gamma^{h}_{hr}.
\]
Furthermore, the quadratic form $\operatorname{Ric}_{(\M,g)}$
associated to the Ricci tensor $\operatorname{Ric}_{(\M,g)}$ is said to be the
{Ricci curvature} of the manifold $(\M,g)$.

We say that $\M$ has Ricci curvature $\operatorname{Ric}_{(\mathcal{M},g)}$ bounded from below if there exists $h\in \R$ such that
\[
\operatorname{Ric}_{(\M,g)}(v,v)\geq h\left\langle v,v \right\rangle_g,
\]
for every $(v,\sigma)\in T_{\sigma}\M\times\M$. A simple form of our results can be obtained when $\operatorname{Ric}_{(\M,g)}\geq 0$. In such a case it suffices to choose the function $H$ identically zero on the curvature condition $(As_{R}^{H, g})$.

Although the curvature of a higher dimensional Riemannian manifold is
a much more sophisticated object than the Gauss curvature of a
surface, it is possible to describe a similar concept in terms a two
dimensional plane $\pi\subset T_\sigma\M$. In this spirit, given any
point $\sigma\in \M$ and any two dimensional plane
$\pi\subset T_\sigma\M$, the sectional curvature of $\pi$ is defined
as follows
\[
\kappa(\pi):=\frac{R(u,v,u,v)}{|u|_g^2|v|^2_g-\left\langle u,v \right\rangle_g^2},
\]
where $\{u,v\}$ is a basis of $\pi$. Since the above definition is independent of the choice of
the vectors $\{u,v\}$, we can compute $\kappa(\pi)$ by working with an orthonormal basis of $\pi$;
see the classical book \cite{RG} for details.\par

A smooth curve $\gamma$ is said to
be a geodesic if
\[
\nabla_{\frac{d\gamma}{dt}}\left(\frac{d\gamma}{dt}\right)=0,
\]
 where $\nabla$ is the Levi--Civita connection on $(\M,g)$. In local coordinates, this means that
\[
\frac{d^2(x^k\circ \gamma(t))}{dt^2}+\Gamma^{k}_{ij}\frac{\partial (x^k\circ \gamma(t))}{\partial x^i}\frac{\partial (x^k\circ \gamma(t))}{\partial x^j}=0.
\]
The Hopf--Rinow theorem ensures that any geodesic on a complete Riemannian manifold
$(\M,g)$ is defined on the whole real line.
Given a complete Riemannian manifold $(\M, g)$ and a point $\sigma\in \M$, the injectivity
radius $\operatorname{inj}_{(\M,g)}(\sigma)$ is defined as the largest $r>0$ for which any geodesic $\gamma$
of length less than $r$ and having $\sigma\in \M$ as an endpoint is minimizing. One has that
$\operatorname{inj}_{(\M,g)}(\sigma)>0$, for any $\sigma\in \M$.

The injectivity radius of $(\M, g)$ is then defined as
\[
\operatorname{inj}_{(\M,g)}:=\inf \left\{\operatorname{inj}_{(\M,g)}(\sigma):\sigma\in\M \right\}\geq 0.
\]
We notice that one gets bounds on the
components of the metric tensor from bounds on the curvature and the injectivity
radius; see \cite{EH1}.

Let
\[
H_V^{1}(\mathcal{M}):=\left\{w\in H^1_g(\mathcal{M}):\int_{\mathcal{M}}|\nabla_g w(\sigma)|^2dv_g+\int_{\mathcal{M}}V(\sigma)|w(\sigma)|^2dv_g<+\infty\right\}
\]
\noindent where the Sobolev space $H^{1}_g(\M)$ is defined as the completion of $C^\infty_0(\M)$ with respect to the norm \eqref{normaHduealpha}. Clearly $H_V^{1}(\mathcal{M})$ is a Hilbert space endowed by the inner product
\[
\langle w_1, w_2\rangle:=\int_{\mathcal{M}}\left\langle \nabla_g w_1(\sigma),\nabla_g w_2(\sigma) \right\rangle_g dv_g +\int_{\mathcal{M}}V(\sigma)w_1(\sigma)w_2(\sigma)dv_g
\]
for all $w_1, w_2\in H_V^{1}(\mathcal{M})$ and the induced norm
\[
\|w\|:=\left(\int_{\mathcal{M}}|\nabla_g w(\sigma)|^2dv_g+\int_{\mathcal{M}}V(\sigma)|w(\sigma)|^2dv_g\right)^{1/2},
\]
for every $w\in H_V^{1}(\mathcal{M})$.

Finally, we introduce a class of functions belonging to $H_V^{1}(\mathcal{M})$ which will be useful to prove our main results.
Since $\alpha\in L^{\infty}(\M)\setminus\{0\}$ is a function with $\alpha\geq 0$, one can find two real numbers $\rho>r>0$ and $\alpha_0>0$ such that
\begin{equation}\label{ess}
\essinf_{\sigma\in A_{r}^{\rho}}\alpha(\sigma)\geq \alpha_0>0.
\end{equation}
\indent Hence, fix $\varepsilon\in [1/2,1)$, let $0<r<\rho$ such that \eqref{ess} holds and set $w_{\rho,r}^{\varepsilon}\in \h$ given by
\begin{equation} \label{TestFunction}
w_{\rho,r}^{\varepsilon}(\sigma):=\left\{
\begin{array}{ll}
0 & \mbox{ if $\sigma \in \M \setminus A_r^{\rho}(\sigma_0)$} \\[5pt]
1 & \mbox{ if $\sigma \in A_{\varepsilon r}^{\rho}(\sigma_0)$}\\[5pt]
 \dfrac{r- |d_g(\sigma_0,\sigma)-\rho|}{(1-\varepsilon)r}
& \mbox{ if $\sigma \in A_{r}^{\rho}(\sigma_0)\setminus A_{\varepsilon r}^{\rho}(\sigma_0)$},
\end{array}
\right.
\end{equation}
\noindent for every $\sigma\in \M$.

With the above notation, we clearly have:
 \begin{itemize}
 \item[$i_1)$] $\operatorname{supp}(w_{\rho,r}^{\varepsilon})\subseteq A_r^{\rho}(\sigma_0)$;
 \item[$i_2)$] $\|w_{\rho,r}^{\varepsilon}\|_\infty\leq 1$;
 \item[$i_3)$] $w_{\rho,r}^{\varepsilon}(\sigma)=1$ for every $\sigma\in A_{\varepsilon r}^{\rho}(\sigma_0)$.
 \end{itemize}
 Moreover, a direct computation shows that the following inequality holds
 \begin{equation}\label{stimazz}
 \|w_{\rho,r}^{\varepsilon}\|^2 \leq m_{V}\left(1+\frac{1}{(1-\varepsilon)^2r^2}\right)\operatorname{Vol}_g(A_r^{\rho}(\sigma_0)),
 \end{equation}
 where
 \[
 m_{V}:=\max\left\{1,\inf_{\sigma\in A_r^{\rho}(\sigma_0)}V(\sigma)\right\}.
 \]
 Indeed, one has
 \begin{equation}\label{stimazz22}
 \|w_{\rho,r}^{\varepsilon}\|^2 \leq m_{V}\left(\int_{\mathcal{M}}|\nabla_g w_{\rho,r}^{\varepsilon}(\sigma)|^2dv_g+\int_{\mathcal{M}}|w_{\rho,r}^{\varepsilon}(\sigma)|^2dv_g\right).
 \end{equation}
 Now, thanks to $i_1)-i_3)$ and bearing in mind the eikonal equation
 \eqref{eikonal}, setting
\begin{equation*}
 I:=\int_{\mathcal{M}}|\nabla_g w_{\rho,r}^{\varepsilon}(\sigma)|^2dv_g+\int_{\mathcal{M}}|w_{\rho,r}^{\varepsilon}(\sigma)|^2dv_g,
\end{equation*}
we have that
\begin{align*}
  I&= \int_{A_r^{\rho}(\sigma_0)}|\nabla_gw_{\rho,r}^{\varepsilon}(\sigma)|^2dv_g+\int_{A_r^{\rho}(\sigma_0)}|w_{\rho,r}^{\varepsilon}(\sigma)|^2dv_g\nonumber\\
   &= \int_{A_{\varepsilon r}^{\rho}(\sigma_0)}|\nabla_g w_{\rho,r}^{\varepsilon}(\sigma)|^2dv_g+\int_{A_{\varepsilon r}^{\rho}(\sigma_0)}|w_{\rho,r}^{\varepsilon}(\sigma)|^2dv_g\nonumber\\
   & \quad+\int_{A_{r}^{\rho}(\sigma_0)\setminus A_{\varepsilon r}^{\rho}(\sigma_0)}|\nabla_g w_{\rho,r}^{\varepsilon}(\sigma)|^2dv_g+\int_{A_{r}^{\rho}(\sigma_0)\setminus A_{\varepsilon r}^{\rho}(\sigma_0)}|w_{\rho,r}^{\varepsilon}(\sigma)|^2dv_g\nonumber\\
   &\leq\operatorname{Vol}_g(A_{r}^{\rho}(\sigma_0))+\frac{1}{(1-\varepsilon)^2r^2}\int_{A_{r}^{\rho}(\sigma_0)\setminus A_{\varepsilon r}^{\rho}(\sigma_0)}|\nabla_g (r- |d_g(\sigma_0,\sigma)-\rho|)|^2dv_g\nonumber\\
   &=\operatorname{Vol}_g(A_{r}^{\rho}(\sigma_0))+\frac{1}{(1-\varepsilon)^2r^2}\int_{A_{r}^{\rho}(\sigma_0)\setminus A_{\varepsilon r}^{\rho}(\sigma_0)}|\nabla_g |d_g(\sigma_0,\sigma)-\rho||^2dv_g\nonumber\\
   &=\operatorname{Vol}_g(A_{r}^{\rho}(\sigma_0))+\frac{1}{(1-\varepsilon)^2r^2}\operatorname{Vol}_g(A_{r}^{\rho}(\sigma_0)\setminus A_{\varepsilon r}^{\rho}(\sigma_0))\nonumber\\
   &\leq \left(1+\frac{1}{(1-\varepsilon)^2r^2}\right)\operatorname{Vol}_g(A_r^{\rho}(\sigma_0)).
 \end{align*}
 Then, the above inequality and \eqref{stimazz22} immediately yields \eqref{stimazz}.
 We notice that the class of functions given by
 \begin{equation} \label{TestFunctionspecial}
w_{\rho,r}^{1/2}(\sigma):=\left\{
\begin{array}{ll}
0 & \mbox{ if $\sigma \in \M \setminus A_r^{\rho}(\sigma_0)$} \\[5pt]
1 & \mbox{ if $\sigma \in A_{r/2}^{\rho}(\sigma_0)$}\\[5pt]
 \frac{2}{r}(r- |d_g(\sigma_0,\sigma)-\rho|)
& \mbox{ if $\sigma \in A_{r}^{\rho}(\sigma_0)\setminus A_{r/2}^{\rho}(\sigma_0)$},
\end{array}
\right.
\end{equation}
 was introduced in \cite{FK}. Of course $w_{\rho,r}^{\varepsilon}$, with $\varepsilon\in [1/2,1)$, is a sort of deformation of $w_{\rho,r}^{1/2}$ whose geometrical shape will be crucial for our goals (see, for instance, Proposition \ref{JDE-F(u-0)0} and the proof of Theorem \ref{Main1}).
\subsection{Embedding results}\label{subsec:b}
For complete manifolds with bounded sectional curvature and positive injectivity radius the embedding $H^1_g(\mathcal{M})\hookrightarrow L^{2^*}(\mathcal{M})$ is continuous, see \cite{Aubin}. The same result holds for manifolds with Ricci curvature bounded from below and positive injectivity radius, see the classical book \cite{EH1}. Moreover, following again \cite{EH1}, we recall that the next embedding result holds for complete, non--compact $d$--dimensional manifolds with Ricci curvature bounded from below and $\inf_{\sigma\in \mathcal{M}}\textrm{ Vol}_g(B_{\sigma}(1))>0$.
\begin{theorem}
 Let $(M,g)$ be a complete, non--compact Riemannian manifold of dimension $d \geq 3$ such that its Ricci curvature $\operatorname{Ric}_{(\mathcal{M},g)}$ is bounded from below and assume that condition \eqref{c6nuovo} holds. Then the embedding $H^1_g(\mathcal{M})\hookrightarrow L^{\nu}(\mathcal{M})$ is continuous for $\nu\in [2,2^*]$, where \(2^*=2d/(d-2)\) is the critical Sobolev exponent.
\end{theorem}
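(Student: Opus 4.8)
The plan is to treat the two endpoints $\nu=2$ and $\nu=2^{*}$ first and then recover the whole range $\nu\in(2,2^{*})$ by interpolation. For $\nu=2$ there is nothing to prove: directly from the definition \eqref{normaHduealpha} of the norm one has $\|w\|_{L^{2}(\M)}\le\|w\|_{g}$ for every $w\in H^{1}_{g}(\M)$. The substantial point is the endpoint embedding $H^{1}_{g}(\M)\hookrightarrow L^{2^{*}}(\M)$, and here I would argue as in \cite{EH1}. The two standing hypotheses --- $\operatorname{Ric}_{(\M,g)}$ bounded from below and $\inf_{\sigma\in\M}\operatorname{Vol}_g(B_{\sigma}(1))>0$ --- guarantee, through an isoperimetric-type estimate in \cite{EH1}, a \emph{uniform local Sobolev inequality}: there is a constant $C=C(d)>0$, independent of the center, such that on every unit geodesic ball $B_{\sigma}(1)$ one has $\|u\|_{L^{2^{*}}(B_{\sigma}(1))}\le C\bigl(\|\nabla_g u\|_{L^{2}(B_{\sigma}(1))}+\|u\|_{L^{2}(B_{\sigma}(1))}\bigr)$ for $u$ supported in $B_{\sigma}(1)$. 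One then fixes a countable cover of $\M$ by unit balls $\{B_{\sigma_{i}}(1)\}_{i}$ that is uniformly locally finite: taking a maximal $1$-separated set $\{\sigma_{i}\}$, the balls $B_{\sigma_{i}}(1)$ cover $\M$, the balls $B_{\sigma_{i}}(1/2)$ are disjoint, and the Ricci lower bound together with the Bishop--Gromov volume comparison bounds the number of the latter lying in any fixed ball, hence the multiplicity of the cover. Summing the local inequalities over this cover and absorbing the overlaps via the uniform local finiteness yields the global inequality
\[
\|w\|_{L^{2^{*}}(\M)}\le C\left(\|\nabla_g w\|_{L^{2}(\M)}+\|w\|_{L^{2}(\M)}\right)\le C\,\|w\|_{g},
\]
first for $w\in C^{\infty}_{0}(\M)$ and then, since $H^{1}_{g}(\M)$ is by definition the $\|\cdot\|_{g}$--completion of $C^{\infty}_{0}(\M)$, for all $w\in H^{1}_{g}(\M)$.

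It remains to pass to the intermediate exponents. Given $\nu\in(2,2^{*})$, pick $\theta\in(0,1)$ with $\tfrac{1}{\nu}=\tfrac{1-\theta}{2}+\tfrac{\theta}{2^{*}}$ and combine the elementary interpolation inequality between $L^{2}$ and $L^{2^{*}}$ with the two bounds just obtained:
\[
\|w\|_{L^{\nu}(\M)}\le\|w\|_{L^{2}(\M)}^{1-\theta}\,\|w\|_{L^{2^{*}}(\M)}^{\theta}\le\|w\|_{g}^{1-\theta}\bigl(C\|w\|_{g}\bigr)^{\theta}=C^{\theta}\,\|w\|_{g}.
\]
This gives the continuous embedding $H^{1}_{g}(\M)\hookrightarrow L^{\nu}(\M)$ for every $\nu\in[2,2^{*}]$ and completes the proof.

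The one genuinely delicate ingredient is the uniform local Sobolev inequality in the second step: its constant on $B_{\sigma}(1)$ must be controlled uniformly in $\sigma$ from the Ricci lower bound and $\operatorname{Vol}_g(B_{\sigma}(1))\ge v_{0}>0$ \emph{alone}, with no hypothesis on the injectivity radius, and this is exactly where the isoperimetric machinery of \cite{EH1} (together with the Bishop--Gromov packing argument producing the uniformly locally finite cover) carries the load. Everything else --- the trivial $L^{2}$ bound, the density extension, and the interpolation step --- is routine.
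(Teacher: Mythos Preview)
The paper does not actually prove this theorem: it is stated in Section~\ref{subsec:b} as a known result and attributed to Hebey~\cite{EH1}, with no argument given in the text. Your sketch follows precisely the strategy of~\cite{EH1} --- uniform local Sobolev inequality from the Ricci lower bound and the volume non-collapsing hypothesis, a Bishop--Gromov packing argument to build a uniformly locally finite cover, summation to a global $L^{2^{*}}$ bound, and interpolation for the intermediate exponents --- so your proposal is entirely consistent with the source the paper is citing, and there is nothing to compare against in the paper itself.
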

 By using the above result, a sign condition on $V$ and the technical assumption $(As_{R}^{H, g})$ ensure that the following embedding property for the Sobolev space $H^1_V(\mathcal{M})$ into the Lebesgue spaces holds true.

\begin{corollary}
 Let $(M,g)$ be a complete, non--compact Riemannian manifold of dimension $d \geq 3$ satisfying condition $(As_{R}^{H, g})$ and \eqref{c6nuovo}. Furthermore, assume that $(V_1)$ holds. Then the embedding $H^1_V(\mathcal{M})\hookrightarrow L^{\nu}(\mathcal{M})$ is continuous for $\nu\in [2,2^*]$.
\end{corollary}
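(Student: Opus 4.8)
The plan is to obtain the corollary from the (unnumbered) theorem stated just above it by factoring the embedding through $H^1_g(\mathcal{M})$ in two elementary steps.

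First I would check that $(As_{R}^{H,g})$ forces $\operatorname{Ric}_{(\mathcal{M},g)}$ to be bounded from below in the usual sense. Since $H\in C^1([0,\infty))$ is non-negative and bounded, the constant $h_0:=-(d-1)\sup_{t\ge 0}H(t)$ is finite, and for every $\sigma\in\mathcal{M}$ and every $v\in T_\sigma\mathcal{M}$,
\[
\operatorname{Ric}_{(\mathcal{M},g)}(v,v)\ge -(d-1)H(d_g(\bar\sigma_0,\sigma))\,\langle v,v\rangle_g\ge h_0\,\langle v,v\rangle_g .
\]
Hence, together with \eqref{c6nuovo}, the hypotheses of the preceding theorem are satisfied, so $H^1_g(\mathcal{M})\hookrightarrow L^{\nu}(\mathcal{M})$ is continuous for every $\nu\in[2,2^*]$; fix $S_\nu>0$ with $\|w\|_\nu\le S_\nu\|w\|_g$ for all $w\in H^1_g(\mathcal{M})$.

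Second I would use $(V_1)$ to compare the norms $\|\cdot\|$ and $\|\cdot\|_g$ on $H^1_V(\mathcal{M})$. Since $V(\sigma)\ge\nu_0>0$, for $w\in H^1_V(\mathcal{M})$ (which by definition already lies in $H^1_g(\mathcal{M})$) one has $\int_{\mathcal{M}}|w|^2\,dv_g\le\nu_0^{-1}\int_{\mathcal{M}}V(\sigma)|w|^2\,dv_g$, whence
\[
\|w\|_g^2=\int_{\mathcal{M}}|\nabla_g w|^2\,dv_g+\int_{\mathcal{M}}|w|^2\,dv_g\le\max\{1,\nu_0^{-1}\}\,\|w\|^2 ,
\]
so the inclusion $H^1_V(\mathcal{M})\hookrightarrow H^1_g(\mathcal{M})$ is continuous.

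Composing the two estimates gives $\|w\|_\nu\le S_\nu\max\{1,\nu_0^{-1}\}^{1/2}\|w\|$ for every $w\in H^1_V(\mathcal{M})$ and every $\nu\in[2,2^*]$, which is the assertion. There is no genuine obstacle here; the only points to be slightly careful about are that $H^1_V(\mathcal{M})$ is defined as a subspace of $H^1_g(\mathcal{M})$ (so the factorization through $H^1_g(\mathcal{M})$ is legitimate) and that both inequalities above are uniform over $\mathcal{M}$, which is guaranteed respectively by the global boundedness of $H$ and by $(V_1)$.
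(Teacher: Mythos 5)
Your proof is correct and follows exactly the argument the paper intends: since $H$ is bounded, $(As_{R}^{H,g})$ yields a uniform lower bound on the Ricci curvature so the preceding theorem gives the continuous embedding $H^1_g(\mathcal{M})\hookrightarrow L^{\nu}(\mathcal{M})$, and $(V_1)$ gives the continuous inclusion $H^1_V(\mathcal{M})\hookrightarrow H^1_g(\mathcal{M})$, whence the corollary follows by composition. This is precisely the route the paper indicates (it only sketches it), so there is nothing further to add.
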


Finally, in order to employ a variational approach we need a
Rabinowitz--type compactness result proved recently in
\cite{FaraciFarkas} and that will be crucial for our purposes.

\begin{lemma}\label{immer}
  Let $(M,g)$ be a complete, non-compact Riemannian manifold of
  dimension $d \geq 3$ such that
  conditions $(As_{R}^{H, g})$ and \eqref{c6nuovo} holds. Furthermore,
  assume that $(V_1)$ and $(V_2)$ are verified. Then the embedding
  $H^1_V(\mathcal{M})\hookrightarrow L^{\nu}(\mathcal{M})$ is compact
  for $\nu\in [2,2^*)$.
\end{lemma}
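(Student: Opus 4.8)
The plan is to prove Lemma \ref{immer} in two stages: first establish that $H^1_V(\mathcal{M})$ embeds \emph{compactly} into $L^2(\mathcal{M})$ using the coercivity condition $(V_2)$, and then bootstrap to all exponents $\nu\in(2,2^*)$ by interpolating against the \emph{continuous} embedding $H^1_V(\mathcal{M})\hookrightarrow L^{2^*}(\mathcal{M})$ supplied by the Corollary above. The key mechanism is the classical Rabinowitz ``potential well'' trick: because $V(\sigma)\to+\infty$ as $d_g(\sigma_0,\sigma)\to+\infty$, the unbounded part of $\mathcal{M}$ contributes a vanishing amount to the $L^2$-norm of any bounded sequence in $H^1_V(\mathcal{M})$, while on each geodesic ball $B_{\sigma_0}(R)$ — which has finite volume by $(As_R^{H,g})$ and the Bishop--Gromov comparison theorem — the classical Rellich--Kondrachov theorem applies since $H^1_V$ restricted to a bounded set coincides with the usual $H^1_g$ on that set (as $V$ is bounded there, being continuous).

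First I would take a sequence $(w_n)$ bounded in $H^1_V(\mathcal{M})$, say $\|w_n\|\le M$, and extract a subsequence with $w_n\weakto w$ in $H^1_V(\mathcal{M})$. The goal is $w_n\to w$ strongly in $L^2(\mathcal{M})$; replacing $w_n$ by $w_n-w$ we may assume $w=0$. Fix $\ge>0$. By $(V_2)$ choose $R>0$ so large that $V(\sigma)\ge M^2/\ge$ for all $\sigma$ with $d_g(\sigma_0,\sigma)\ge R$; then
\[
\int_{\mathcal{M}\setminus B_{\sigma_0}(R)}|w_n|^2\,dv_g \le \frac{\ge}{M^2}\int_{\mathcal{M}}V(\sigma)|w_n|^2\,dv_g \le \frac{\ge}{M^2}\cdot M^2=\ge,
\]
uniformly in $n$. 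On the complementary region $B_{\sigma_0}(R)$, the potential $V$ is bounded (continuity on the compact closure), hence the $H^1_V$-norm and the standard $H^1_g$-norm are equivalent on $B_{\sigma_0}(R)$; since this ball has finite volume and we are on a manifold of bounded geometry on compact sets, the Rellich--Kondrachov embedding $H^1_g(B_{\sigma_0}(R))\hookrightarrow\hookrightarrow L^2(B_{\sigma_0}(R))$ gives $\int_{B_{\sigma_0}(R)}|w_n|^2\,dv_g\to 0$ along a further subsequence. Combining, $\limsup_n\int_{\mathcal{M}}|w_n|^2\,dv_g\le\ge$; since $\ge$ was arbitrary (and a standard diagonal/subsequence argument upgrades this to the full original subsequence), $w_n\to 0$ in $L^2(\mathcal{M})$.

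For $\nu\in(2,2^*)$, write $\nu=2\theta+2^*(1-\theta)$ with $\theta\in(0,1)$ and apply the interpolation inequality $\|w_n-w\|_\nu\le\|w_n-w\|_2^{\theta}\|w_n-w\|_{2^*}^{1-\theta}$; the first factor tends to $0$ by the previous step and the second stays bounded by the continuous embedding $H^1_V(\mathcal{M})\hookrightarrow L^{2^*}(\mathcal{M})$, so $w_n\to w$ in $L^\nu(\mathcal{M})$. I expect the main obstacle to be the justification of the local Rellich--Kondrachov step: one must verify that $B_{\sigma_0}(R)$ genuinely enjoys a compact Sobolev embedding, which on a general complete manifold requires knowing that bounded geodesic balls have finite volume and controlled local geometry — here guaranteed because $(As_R^{H,g})$ yields a lower Ricci bound on $B_{\sigma_0}(R)$ and hence, via Bishop--Gromov, $\operatorname{Vol}_g(B_{\sigma_0}(R))<+\infty$, so that the restriction of $H^1_V$ to this ball embeds compactly into $L^2$ by the standard theory (e.g. \cite{EH1}). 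The rest is the Rabinowitz decomposition, which is routine once the local compactness is in hand; I would, however, cite \cite[Lemma 2.1]{FaraciFarkas} for the precise form rather than reproduce every detail.
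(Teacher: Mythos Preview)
The paper does not actually prove Lemma \ref{immer}; it is quoted verbatim as \cite[Lemma~2.1]{FaraciFarkas} and used as a black box. Your sketch is precisely the standard Rabinowitz-type argument one expects behind such a citation: coercivity of $V$ forces uniform $L^2$-smallness at infinity, Hopf--Rinow guarantees $\overline{B_{\sigma_0}(R)}$ is compact so the local Rellich--Kondrachov embedding applies, and interpolation against the continuous $L^{2^*}$-embedding handles $\nu\in(2,2^*)$. One small slip: the interpolation parameter $\theta$ should satisfy $\tfrac{1}{\nu}=\tfrac{\theta}{2}+\tfrac{1-\theta}{2^*}$, not $\nu=2\theta+2^*(1-\theta)$; the inequality $\|w\|_\nu\le\|w\|_2^{\theta}\|w\|_{2^*}^{1-\theta}$ you wrote is nonetheless the correct one with the right $\theta$.
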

Thus, if $\nu\in [2,2^{*}]$, there exists a positive constant $c_\nu$
such that
\begin{equation}\label{constants}
  \left(\int_{\M}|w(\sigma)|^{\nu}dv_g\right)^{1/\nu}\leq c_\nu \left(\int_{\mathcal{M}}|\nabla_g w(\sigma)|^2dv_g+\int_{\mathcal{M}}V(\sigma)|w(\sigma)|^2dv_g\right)^{1/2},
\end{equation}
for every $w\in H_V^{1}(\mathcal{M})$. From now on, for every
$\nu\in [2,2^*]$, we denote by
\[
\|w\|_\nu:=\left(\int_{\M}|w(\sigma)|^{\nu}dv_g\right)^{1/\nu}
\]
denotes the usual norm of the Lebesgue space $L^{\nu}(\M)$.

\begin{remark}\label{immergo}
  We point out that the embedding result stated in Lemma \ref{immer}
  is still true requiring that the potential $V$ is measurable
  (instead of continuous) and $(V_1)$ and $(V_2)$ are verified. The
  same conclusion hold, in the Euclidean case, if
  $V\in L^{\infty}_{\mathrm{loc}}(\R^d)_+$ and
\[
\lim_{|x|\rightarrow +\infty}\int_{B(x)}\frac{dy}{V(y)}=0,
\]
where $B(x)$ denotes the unit ball of center $x\in \R^{d}$. See \cite{FaraciFarkas} for some details.
\end{remark}

\subsection{Weak solutions}
\begin{definition}
	Assume that $f\colon \R\rightarrow\R$ is a subcritical function and $\lambda>0$ is fixed. We say that a function $w\in \h$ is a \textit{weak solution} to problem \eqref{problema} if
	\begin{equation}\label{EL}
	\int_{\mathcal{M}}\left\langle \nabla_g w(\sigma),\nabla_g \varphi(\sigma) \right\rangle_g dv_g +\int_{\mathcal{M}}V(\sigma)w(\sigma)\varphi(\sigma)dv_g=\lambda\int_{\M}\alpha(\sigma)f(w(\sigma))\varphi(\sigma)dv_g,
	\end{equation}
	for every $\varphi\in\h$.
\end{definition}
By direct computation, equation \eqref{EL} represents the variational formulation of \eqref{problema} and the energy functional $J_{\lambda}:\h\to \R$ associated with \eqref{problema} is defined by
\begin{equation*}\label{JKlambda}
 J_{\lambda}(w) :=\frac{1}{2}\left(\int_{\mathcal{M}}|\nabla_g w(\sigma)|^2dv_g+\int_{\mathcal{M}}V(\sigma)|w(\sigma)|^2dv_g\right)
-\lambda\int_{\M}\alpha F(w(\sigma))\, dv_g,
\end{equation*}
for every $w\in \h$.\par
Indeed, as it easily seen, under our assumptions on the nonlinear term, the functional
$J_{\lambda}$ is well defined and of class $C^1$ in $\h$. Moreover, its critical points
are exactly the weak solutions of the problem \eqref{problema}.\par
For a function $f\colon [0,+\infty)\rightarrow \R$ with $f(0)=0$, we set
\[
F_+(t):=\int_0^t  f_+(\tau)d\tau,
\]
for every $t\in \R$, where
\begin{equation}\label{fpiu}
f_+(\tau):=
\left\lbrace
\begin{array}{ll}
f(\tau) & \mbox{ if } \tau\geq 0\\
0 & \mbox{ if } \tau< 0.
\end{array}
\right.
\end{equation}
Furthermore, let $\mathcal J_{\lambda}\colon \h\to \R$ be the functional given by
\begin{equation}\label{JKlambda+}
\begin{aligned}
\mathcal J_{\lambda}(w)& :=\frac{1}{2}\left(\int_{\mathcal{M}}|\nabla_g w(\sigma)|^2dv_g+\int_{\mathcal{M}}V|w(\sigma)|^2dv_g\right)
-\lambda\int_{\M}\alpha(\sigma) F_+(w(\sigma))\, dv_g.
\end{aligned}
\end{equation}

 Our approach to prove existence and multiplicity results to Problem \eqref{problema} consists of applying variational methods to the functional $\mathcal J_{\lambda}$. To this end, we write $\mathcal J_{\lambda}$ as
\[
\mathcal J_{\lambda}(w)= \Phi(w)-\lambda \Psi(w),
\]
where
\begin{equation}\label{psis}
\Phi(w):=\frac 1 2 \left\|w\right\|^2,
\end{equation}
while
\[
\Psi(w):= \int_{\M} \alpha(\sigma) F_{+}(w(\sigma)) dv_g,
\]
for every $w\in\h$. Clearly, the functional $\Phi$ and $\Psi$ are Fr\'echet differentiable.

\subsection{The $L^{\infty}$--boundness of solutions}

The next result is an application of the Nash--Moser iteration scheme and follows by a direct consequence of \cite[Theorem 3.1]{FaraciFarkas}.

\begin{proposition}\label{faracif}
  Let $(\mathcal{M},g)$ be a complete, non--compact, Riemannian
  manifold of dimension $d \geq 3$ such that conditions
  $(As_{R}^{H, g})$ and \eqref{c6nuovo} hold. Furthermore, let $V$ be
  a potential for which hypotheses $(V_1)$ and $(V_2)$ are valid. In
  addition, let
  $\alpha\in L^{\infty}(\mathcal{M})\cap L^{1}(\mathcal{M})$
  satisfying \eqref{proprietabeta} and $f\colon [0,+\infty)\to\R$ be a
  continuous function verifying \eqref{supf}--\eqref{Segno}. Then
  every critical point of the functional
  $\mathcal J_{\lambda}\colon \h\to \R$ given in \eqref{JKlambda+} is
  non--negative. Moreover, if $w_\lambda\in \h$ is a critical point of
  $\mathcal J_{\lambda}$ and $\sigma_0\in \M$, the following facts
  hold:
\begin{itemize}
\item[$(j)$] for every $\varrho>0$, $w_\lambda\in L^{\infty}(B_{\sigma_0}(\varrho))$;
\item[$(jj)$] $w_\lambda\in L^{\infty}(\M)$ and
\[
\lim_{d_g(\sigma_0,\sigma)\rightarrow +\infty} w_\lambda(\sigma)=0.
\]
\end{itemize}
\end{proposition}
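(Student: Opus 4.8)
The plan is to reduce everything to the cited bound \cite[Theorem 3.1]{FaraciFarkas} after checking that the truncated functional $\mathcal J_\lambda$ produces weak solutions of the original problem with a nonnegative right-hand side. First I would verify the \emph{sign property}: if $w_\lambda$ is a critical point of $\mathcal J_\lambda$, then $w_\lambda\geq 0$ a.e.\ on $\M$. For this one tests the Euler--Lagrange identity for $\mathcal J_\lambda$ with the admissible function $\varphi=w_\lambda^-:=\max\{-w_\lambda,0\}\in\h$. Since $F_+'(t)=f_+(t)=0$ for $t\leq 0$, the nonlinear term $\int_\M\alpha(\sigma)f_+(w_\lambda)w_\lambda^-\,dv_g$ vanishes, while the quadratic part gives $-\|w_\lambda^-\|^2$; hence $\|w_\lambda^-\|=0$, so $w_\lambda^-=0$ and $w_\lambda\geq 0$. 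Consequently $f_+(w_\lambda)=f(w_\lambda)$ and $w_\lambda$ is in fact a (nonnegative) weak solution of \eqref{problema} in the sense of \eqref{EL}.

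Next I would put the equation in the divergence form handled by the Nash--Moser scheme of \cite{FaraciFarkas}. Writing the equation as $-\Delta_g w_\lambda = \lambda\alpha(\sigma)f(w_\lambda)-V(\sigma)w_\lambda=:\mathfrak{f}(\sigma,w_\lambda(\sigma))$, one checks the subcritical growth bound needed there: by \eqref{supf} and \eqref{condinfinito} the ratio $f(t)/t$ is bounded on $(0,+\infty)$ (indeed $0\le f(t)/t\le c_f$ by \eqref{c6}), so $0\le f(t)\le c_f\,t$ for all $t\ge 0$; together with $\alpha\in L^\infty(\M)$ and $V$ continuous this yields the linear (hence subcritical) majorant $|\mathfrak{f}(\sigma,t)|\le C(1+t)$ on bounded sets, which is exactly the structural hypothesis under which \cite[Theorem 3.1]{FaraciFarkas} applies. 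Part $(j)$, the local $L^\infty$ bound on each geodesic ball $B_{\sigma_0}(\varrho)$, then follows verbatim from the local boundedness statement in that theorem, using the compact embedding $\h\hookrightarrow L^{2^*}(\M)$ from Lemma \ref{immer} and \eqref{constants} to start the iteration.

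Finally, for part $(jj)$ I would invoke the global part of \cite[Theorem 3.1]{FaraciFarkas}: the coercivity of $V$, i.e.\ $(V_2)$, combined with $\alpha\in L^1(\M)\cap L^\infty(\M)$ and the decay $f(t)\le c_f t$, forces the solution to be globally bounded and to vanish at infinity, $\lim_{d_g(\sigma_0,\sigma)\to+\infty}w_\lambda(\sigma)=0$. The point is that far from $\sigma_0$ the zero-order coefficient $V(\sigma)-\lambda\alpha(\sigma)c_f$ is large and positive (since $V\to+\infty$ while $\alpha$ is bounded), so a comparison/maximum-principle argument inside the Moser iteration gives uniform smallness on the exterior regions; this is the part carried out in detail in the reference and we only need to certify that our hypotheses match its assumptions. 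The only genuine work on our side is therefore the sign argument and the verification of the growth condition; the delicate Nash--Moser iteration and the decay estimate are imported as a black box, so the main obstacle is purely bookkeeping: making sure the truncation $f_+$ does not spoil the structural assumptions of \cite[Theorem 3.1]{FaraciFarkas}, which the sign property resolves by showing $f_+(w_\lambda)=f(w_\lambda)$.
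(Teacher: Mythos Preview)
Your strategy matches the paper's proof: test the Euler--Lagrange identity with $w_\lambda^-$ to obtain $w_\lambda\ge 0$, verify a subcritical growth bound on the nonlinearity, and invoke \cite[Theorem~3.1]{FaraciFarkas} as a black box for both the local and global $L^\infty$ conclusions. One cosmetic slip: nothing in \eqref{supf}--\eqref{Segno} forces $f\ge 0$, so your claim $0\le f(t)/t$ is unwarranted; the paper instead derives the two-sided bound $|f_+(t)|\le \varepsilon\frac{\nu_0}{\|\alpha\|_\infty}|t|+m_\varepsilon|t|^{q-1}$ by splitting into small, intermediate and large $t$ via \eqref{supf} and \eqref{condinfinito}, and it keeps the equation in the form $-\Delta_g w+Vw=\lambda\alpha f_+(w)$ (checking the growth condition on $\varphi(\sigma,t)=\alpha(\sigma)f_+(t)$ directly) rather than moving the unbounded potential $V$ to the right-hand side.
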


\begin{proof}
Let $w_\lambda\in \h$ be a critical point of the functional \eqref{JKlambda+}.
We claim that $w_{\lambda}$ is non-negative on $\M$. Indeed, since $w_{\lambda}\in H_V^1(\M)$, then $(w_{\lambda})^-:=\max\{-w_{\lambda}, 0\}$ belongs to $H_V^1(\M)$ as well. So, taking also account of the relationship
	\begin{align*}
	\left\langle w_{\lambda},(w_{\lambda})^-\right\rangle & = \int_{\M}\left\langle \nabla_g w_{\lambda}(\sigma),\nabla_g (w_{\lambda})^-(\sigma) \right\rangle_g  dv_g\\
	&\;\;\; +  \int_{\M} \alpha(\sigma)\left\langle w_{\lambda}(\sigma), (w_{\lambda})^-(\sigma)\right\rangle_g dv_g \\
	& = - \int_{\M} \left( |\nabla_g (w_{\lambda})^- (\sigma)|^2 + \alpha(\sigma)(w_{\lambda})^-(\sigma)^2\right)  dv_g,
	\end{align*}
	 we get
	\begin{align*}
	\left\langle\mathcal J_{\lambda}'(w_\lambda), (w_{\lambda})^-\right\rangle & = \left\langle w_{\lambda},(w_{\lambda})^-\right\rangle -\lambda\int_{\mathcal M} f_+(w_{\lambda}(\sigma))(w_{\lambda})^-(\sigma) dv_g\\
	& = -\left\|(w_{\lambda})^-\right\|^2\\
	& =0.
   \end{align*}
As a result, $\|(w_{\lambda})^-\|=0$ and hence $w_{\lambda}\geq 0$ a.e. on $\mathcal M$.

Let us prove now that the function $\varphi:\M\times [0,+\infty)\rightarrow\R$ given by $\varphi(\sigma,t):=\alpha(\sigma)f_+(t)$, for every $(\sigma,t)\in \M\times \R$, satisfy, for some $k>0$ {and} $q\in (2,2^*)$, the growth condition
\begin{equation}\label{crescitaf}
|\varphi(\sigma,t)|\leq k(|t| + |t|^{q-1}),
\end{equation}
for every $(\sigma,t)\in \M\times\R$.

\indent Fix $\varepsilon>0$ and $q\in\mathopen{(}2,2^*\mathclose{)}$. In view of \eqref{supf} and \eqref{condinfinito}, there exists $\delta_\varepsilon\in \mathopen{(}0,1\mathclose{)}$ such that
\begin{equation}\label{stimaf}
|f_+(t)| \leq \varepsilon\frac{\nu_0}{\left\|\alpha\right\|_\infty}|t|,
\end{equation}
for all $0<|t|\leq \delta_\varepsilon$ and $|t|\geq \delta_\varepsilon^{-1}$.
Since the function
\[
t\mapsto \frac{|f_+(t)|}{|t|^{q-1}}
\]
is bounded on $[\delta_\varepsilon, \delta_\varepsilon^{-1}]$, for some $m_\varepsilon>0$ and for every $t\in \R$ one has
\begin{equation}\label{stimavaloreassolutof}
|f_+(t)| \leq \varepsilon\frac{\nu_0}{\left\|\alpha\right\|_\infty}|t| + m_\varepsilon |t|^{q-1}.
\end{equation}
By \eqref{stimavaloreassolutof}, bearing in mind that $\alpha\in L^{\infty}(\M)$, relation \eqref{crescitaf} immediately holds for
\[
k=\max\left\{\varepsilon\nu_0,{m_\varepsilon}{\left\|\alpha\right\|_\infty}\right\}.
\]
We can now conclude by applying~\cite[Theorem 3.1]{FaraciFarkas}.
\end{proof}

\section{Proof of Theorem \ref{generalkernel0f}}

As pointed out in the Introduction, the results of this section can be viewed as a counterpart of some recent contributions obtained by many authors in several different contexts (see, among others, the papers \cite{AMR,FFK, FK, k0,k1} and \cite{k2,k3,k4, k5}) to the case of elliptic problems defined on non--compact manifolds with asymptotically non--negative Ricci curvature. We emphasize that a key ingredient in our proof is given by Lemma \ref{immer} and Proposition \ref{JDE-F(u-0)0}. They express peculiar and intrinsic aspects of the problem under consideration.

\subsection{Non--existence for $\lambda$ small}\label{sec:non-exist}

Let us prove $(i)$ of Theorem \ref{generalkernel0f}.

Arguing by contradiction, suppose that there exists a (non--negative) weak solution $w_0\in \h\setminus\{0\}$ to problem \eqref{problema}, i.e.
\begin{equation}\label{problemaK0f45}
\int_{\mathcal{M}}\left\langle \nabla_g w_0(\sigma),\nabla_g \varphi(\sigma) \right\rangle_g dv_g +\int_{\mathcal{M}}V(\sigma)w_0(\sigma)\varphi(\sigma)dv_g=\lambda\int_{\M}\alpha(\sigma)f_+(w_0(\sigma))\varphi(\sigma)dv_g,
\end{equation}
for every $\varphi\in \h$.\par
Testing \eqref{problemaK0f45} with $\varphi:=w_0$, we have
\begin{equation}\label{au34}
\|w_0\|^2 =\lambda\int_{\M}\alpha(\sigma)f_+(w_0(\sigma))w_0(\sigma)dv_g,
\end{equation}
and it follows that

\begin{multline}\label{au345}
\int_{\M}\alpha(\sigma)f_+(w(\sigma))w_0(\sigma)dv_g  \leq  \int_{\M}\alpha(\sigma)\left|f_+(w_0(\sigma))w_0(\sigma)\right|dv_g\\
                                       \leq  \|\alpha\|_\infty\frac{c_f}{\nu_0}\int_{\M}V(\sigma)|w_0(\sigma)|^2dv_g
                                       \leq  \|\alpha\|_\infty\frac{c_f}{\nu_0}\|w_0\|^2.
\end{multline}
 \indent By using \eqref{au34} and \eqref{au345} and bearing in mind the assumption on $\lambda$ we get
\begin{equation*}
\|w_0\|^2\leq \lambda \|\alpha\|_\infty\frac{c_f}{\nu_0}\|w_0\|^2 < \|w_0\|^2,
\end{equation*}
which is a contradiction.

\begin{remark}
	The result stated in Theorem \ref{generalkernel0f} - part $(i)$ was proved in the Euclidean case in \cite{KRO} assuming that the potential $V\in L^{\infty}(\R^d)\cap L^1(\R^d)\setminus\{0\}$ is non--negative and radially symmetric.
\end{remark}

\begin{ex}\label{esempio00}
Let $(\mathcal{M},g)$ be a complete, non--compact, Riemannian manifold (with dimension $d\geq 3$)
such that conditions $(As_{R}^{H, g})$ and \eqref{c6nuovo} hold. Furthermore, let $\alpha\in L^{\infty}(\mathcal{M})\cap L^1(\M)$ satisfying \eqref{proprietabeta} and such that $\left\|\alpha\right\|_{\infty}=1$. Then, the following problem
\begin{equation*}\label{holds}
\left\{
\mkern-8mu
\begin{array}{ll}
-\Delta_gw+(1+d_g(\sigma_0,\sigma))w=\lambda \alpha(\sigma)(\arctan w)^2 & \mbox{ in } \mathcal{M}\\
w\geq 0 & \mbox{ in } \mathcal{M}
\end{array}\right.
\end{equation*}
has only the trivial solution, provided that
\[
0\leq \lambda <\left(\max_{t>0}\frac{(\arctan t)^2}{t}\right)^{-1}.
\]
\end{ex}
\subsection{Multiplicity for large $\lambda$}\label{sec:multiple}

In the present subsection we are going to apply the compact embedding results established above to
prove Theorem \ref{generalkernel0f}. The first preliminary result show the sub--quadraticity of the potential $\Psi$ defined in \eqref{psis}.

\begin{lemma}\label{subquad}
Under our assumptions on the terms $f$ and $\alpha$ stated in Theorem \ref{generalkernel0f}, one has
\begin{equation}\label{sottoquadracita}
\lim_{\left\|w\right\|\to 0}\frac{\Psi(w)}{\left\|w\right\|^2} =0\,\,\,\,\,\,\textrm{ and}\,\,\,\,\,\, \lim_{\left\|w\right\|\to \infty}\frac{\Psi(w)}{\left\|w\right\|^2}=0.
\end{equation}
\end{lemma}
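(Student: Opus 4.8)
The plan is to estimate $\Psi(w) = \int_\M \alpha(\sigma) F_+(w(\sigma))\, dv_g$ from above using a growth bound on $F_+$ and then invoke the continuous Sobolev embeddings from Lemma \ref{immer} (with $\nu = 2$ and with some $\nu = q \in (2,2^*)$). The key point is that \eqref{supf} and \eqref{condinfinito} together give, for any $\varepsilon > 0$ and any fixed $q \in (2,2^*)$, a constant $m_\varepsilon > 0$ with
\[
|f_+(t)| \le \varepsilon |t| + m_\varepsilon |t|^{q-1} \qquad \text{for all } t \in \R,
\]
exactly as established in \eqref{stimavaloreassolutof} in the proof of Proposition \ref{faracif}. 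Integrating, this yields $|F_+(t)| \le \frac{\varepsilon}{2} t^2 + \frac{m_\varepsilon}{q} |t|^q$ for all $t$.

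First I would use this to write, for every $w \in \h$,
\[
|\Psi(w)| \le \|\alpha\|_\infty \int_\M |F_+(w(\sigma))|\, dv_g \le \|\alpha\|_\infty\left(\frac{\varepsilon}{2}\|w\|_2^2 + \frac{m_\varepsilon}{q}\|w\|_q^q\right),
\]
and then apply \eqref{constants} to bound $\|w\|_2 \le c_2 \|w\|$ and $\|w\|_q \le c_q \|w\|$, obtaining
\[
\frac{|\Psi(w)|}{\|w\|^2} \le \|\alpha\|_\infty\left(\frac{\varepsilon}{2} c_2^2 + \frac{m_\varepsilon}{q} c_q^q \|w\|^{q-2}\right).
\]
Since $q > 2$, the second term tends to $0$ as $\|w\| \to 0$; letting $\|w\| \to 0$ and then $\varepsilon \to 0^+$ gives the first limit in \eqref{sottoquadracita}.

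For the behaviour at infinity I would argue slightly differently, since the term $\|w\|^{q-2}$ now blows up. The trick is that \eqref{condinfinito} alone gives, for any $\varepsilon > 0$, a constant $C_\varepsilon > 0$ with $|f_+(t)| \le \varepsilon |t| + C_\varepsilon$ for all $t \ge 0$ (the nonlinearity $f$ is continuous hence bounded on bounded sets, and sublinear at infinity), whence $|F_+(t)| \le \frac{\varepsilon}{2} t^2 + C_\varepsilon |t|$. Then
\[
|\Psi(w)| \le \|\alpha\|_\infty\left(\frac{\varepsilon}{2}\|w\|_2^2\right) + C_\varepsilon \int_\M \alpha(\sigma)|w(\sigma)|\, dv_g,
\]
and here I would use $\alpha \in L^1(\M) \cap L^\infty(\M) \subset L^2(\M)$ together with the Cauchy--Schwarz inequality to get $\int_\M \alpha |w|\, dv_g \le \|\alpha\|_2 \|w\|_2 \le \|\alpha\|_2 c_2 \|w\|$. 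Dividing by $\|w\|^2$ yields
\[
\frac{|\Psi(w)|}{\|w\|^2} \le \frac{\varepsilon}{2}\|\alpha\|_\infty c_2^2 + \frac{C_\varepsilon \|\alpha\|_2 c_2}{\|w\|};
\]
letting $\|w\| \to \infty$ and then $\varepsilon \to 0^+$ proves the second limit.

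The only genuinely delicate point is the derivation of the linear-plus-remainder bounds on $f_+$ from \eqref{supf} and \eqref{condinfinito}; this is routine and was already carried out in the proof of Proposition \ref{faracif}, so it can essentially be quoted. Everything else is a direct application of the continuous embedding inequalities \eqref{constants} and the integrability of $\alpha$, so I do not anticipate a real obstacle.
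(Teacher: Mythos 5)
Your proposal is correct. The treatment of the limit as $\left\|w\right\|\to 0$ is essentially identical to the paper's: the bound $|f_+(t)|\le \varepsilon|t|+m_\varepsilon|t|^{q-1}$ from \eqref{stimavaloreassolutof}, integration to control $F_+$, and the embedding constants (the paper merely absorbs the quadratic piece into $\int_\M V|w|^2\,dv_g$ via $\alpha(\sigma)\nu_0/\left\|\alpha\right\|_\infty\le V(\sigma)$ instead of invoking $c_2$, a cosmetic difference). At infinity your route differs slightly from the paper's: the paper fixes $r\in(1,2)$, derives $|f_+(t)|\le \varepsilon\frac{\nu_0}{\left\|\alpha\right\|_\infty}|t|+M_\varepsilon|t|^{r-1}$, checks $\alpha\in L^{2/(2-r)}(\M)$ from $\alpha\in L^1(\M)\cap L^\infty(\M)$, and applies H\"older to obtain a term of order $\left\|w\right\|^{r}$, so that $\left\|w\right\|^{r-2}\to 0$; you instead take what is in effect the endpoint $r=1$, bounding $|f_+(t)|\le\varepsilon|t|+C_\varepsilon$ (legitimate, since $f$ is continuous and sublinear at infinity) and pairing $\alpha\in L^2(\M)$ with $\left\|w\right\|_2$ by Cauchy--Schwarz, producing a term of order $\left\|w\right\|$. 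Both arguments rest on exactly the same structural ingredients ($\alpha\in L^1\cap L^\infty$, the continuous embedding \eqref{constants}, and the $\varepsilon$-arbitrariness at the end), and yours is marginally more elementary, since it never invokes an $L^r$-quantity with $r<2$; the paper's version, with general $r\in(1,2)$, gives the same conclusion with no additional generality needed here. No gaps.
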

\begin{proof}
\indent Inequality \eqref{stimavaloreassolutof}, in addition to \eqref{constants}, yields
\begin{align*}
|\Psi(w)| & \leq \int_{\M} \alpha(\sigma)|F_+(w(\sigma))| dv_g\\
          & \leq \int_{\M} \alpha(\sigma) \left(\frac{\varepsilon\nu_0}{2\left\|\alpha\right\|_{\infty}} |w(\sigma)|^2 + \frac{m_\varepsilon}{q} |w(\sigma)|^q\right)dv_g\\
          & \leq \int_\M \left(\frac{\varepsilon}{2}V(\sigma)|w(\sigma)|^2 + \frac{m_\varepsilon}{q}\alpha(\sigma) |w(\sigma)|^q\right)dv_g\\
					& \leq \frac{\varepsilon}{2} \int_\M V(\sigma)|w(\sigma)|^2dv_g + \frac{m_\varepsilon}{q}\left\|\alpha\right\|_{\infty} \left\|w\right\|_{q}^q\\
& \leq \frac{\varepsilon}{2} \left\|w\right\|^2 + \frac{m_\varepsilon}{q}c_q^q \left\|\alpha\right\|_{\infty} \left\|w\right\|^q,
\end{align*}
for every $w\in \h$.\par
\indent Therefore, it follows that, for every $w\in \h$,
\[
0 \leq \frac{|\Psi(w)|}{\left\|w\right\|^2} \leq \frac{\varepsilon}{2} + \frac{m_\varepsilon}{q}c_q^q\left\|\alpha\right\|_{\infty} \left\|w\right\|^{q-2}.
\]
Since $q>2$ and $\varepsilon$ is arbitrary, the first limit of \eqref{sottoquadracita} turns out to be zero.

\indent Now, if $r\in \mathopen{(}1,2\mathclose{)}$, due to the continuity of $f_+$, there also exists a number $M_\varepsilon >0$ such that
\[
\frac{|f_+(t)|}{|t|^{r-1}} \leq M_\varepsilon,
\]
for all $t\in[\delta_\varepsilon,\delta_\varepsilon^{-1}]$, where $\varepsilon$ and $\delta_\varepsilon$ are the previously introduced numbers.\par
 The above inequality, together with \eqref{stimaf}, yields
\[
|f_+(t)| \leq\varepsilon\frac{\nu_0}{\left\|\alpha\right\|_{\infty}} |t| + M_\varepsilon |t|^{r-1}
\]
for each $t\in \R$. Now $\alpha\in L^{\frac{2}{2-r}}(\M)$, indeed
\begin{equation*}
\int_{\M}|\alpha(\sigma)|^{\frac{2}{2-r}}dv_g=\int_{\M}\alpha(\sigma)^{\frac{r}{2-r}}\alpha(\sigma)dv_g\leq \|\alpha\|_{\infty}\int_{\M}\alpha(\sigma)dv_g<+\infty.
\end{equation*}
Thus, one has
\begin{align*}
|\Psi(w)| & \leq \int_{\M} \alpha(\sigma)|F_+(w(\sigma))| dv_g\\
          & \leq \int_{\M} \alpha(\sigma) \left(\frac{\varepsilon\nu_0}{2\left\|\alpha\right\|_{\infty}} |w(\sigma)|^2 + \frac{m_\varepsilon}{r} |w(\sigma)|^r\right)dv_g\\
          & \leq \int_\M \left(\frac{\varepsilon}{2}V(\sigma)|w(\sigma)|^2 + \frac{M_\varepsilon}{r}\alpha(\sigma) |w(\sigma)|^r\right)dv_g\\
					& \leq \frac{\varepsilon}{2} \int_\M V(\sigma)|w(\sigma)|^2dv_g + \frac{M_\varepsilon}{r}\left\|\alpha\right\|_{\frac{2}{2-r}} \left\|w\right\|_{r}^r\\
& \leq \frac{\varepsilon}{2} \left\|w\right\|^2 + \frac{M_\varepsilon}{r}c_r^r \left\|\alpha\right\|_{\frac{2}{2-r}} \left\|w\right\|^r,
\end{align*}
for each $w\in\h$.\par
 Therefore, it follows that
\begin{equation}\label{stimaadinfinito}
0 \leq \frac{|\Psi(w)|}{\left\|w\right\|^2} \leq \frac{\varepsilon}{2} + \frac{M_\varepsilon}{r}c_r^r \left\|\alpha\right\|_{\frac{2}{2-r}} \left\|w\right\|^{r-2},
\end{equation}
for every $w\in\h\setminus\{0\}$.\par

\noindent Since $\varepsilon$ can be chosen as small as we wish and $r\in \mathopen{(}1,2\mathclose{)}$, taking the limit for $\left\|w\right\|\to+\infty$ in \eqref{stimaadinfinito}, we have proved the second limit of \eqref{sottoquadracita}.
\end{proof}

\indent One of the main tools used along the proof of Theorem \ref{generalkernel0f} is the Mountain Pass Theorem. The compactness assumption required by this theorem is the well-known {\em Palais--Smale condition} (see, for instance, the classical book \cite{Wi}), which in our framework reads as follows:

\medskip

$\mathcal J_\lambda$ satisfies the \emph{Palais--Smale compactness condition} at level $\mu\in \RR$ (namely (PS)$_\mu$)
if any sequence $\{w_j\}_{j\in\NN}$ in $\h$ such that
$\mathcal J_\lambda(w_j)\to \mu$ and
\[
\sup\Big\{
\big|\langle\,\mathcal J_\lambda'(w_j),\varphi\,\rangle \big| :
\varphi\in \h\,,
\|\varphi\|=1\Big\}\to 0
\]
as $j\to +\infty$,
admits a subsequence strongly convergent in $\h$.

\medskip

In the case when the right--hand side in problem~\eqref{problema} satisfies the structural condition stated in Introduction, we will prove that the corresponding energy functional~$\mathcal J_\lambda$ verifies the Palais--Smale condition. More precisely,
the next energy--compactness result holds.

\begin{lemma}\label{cps}
Let $f\colon \R \to \R$ be a continuous function satisfying conditions \eqref{supf} and \eqref{condinfinito}. Then for every $\lambda>0$, the functional $\mathcal J_{\lambda}$ is bounded from below and coercive. Moreover, for every $\mu\in\R$, and every sequence $\{w_{j}\}_{j\in \N}\subset \h$ such that
\begin{equation}\label{ps1}
\mathcal J_{\lambda}(w_j)\to \mu,
\end{equation}
and
\begin{equation}\label{ps2}
\sup\Big\{\big|\langle\,\mathcal J_{\lambda}'(w_j),\varphi \rangle
\big|: \; \varphi\in \h,\,\textrm{ and}\, \|\varphi\|=1\Big\}\rightarrow 0,
\end{equation}
as $j\rightarrow +\infty$, there exists a subsequence that strongly converges in $\h$.
\end{lemma}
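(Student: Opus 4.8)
The plan is to prove the two assertions separately: first boundedness from below and coercivity, then the Palais--Smale condition, since the latter follows almost immediately from the former.

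\medskip

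\textbf{Step 1: Coercivity and boundedness from below.} The key input is Lemma~\ref{subquad}, which gives $\Psi(w)/\|w\|^2 \to 0$ as $\|w\|\to\infty$. Writing $\mathcal J_\lambda(w) = \tfrac12\|w\|^2 - \lambda\Psi(w)$, I would fix $\lambda>0$ and use the second limit in \eqref{sottoquadracita} to find $R>0$ such that $|\Psi(w)| \le \tfrac{1}{4\lambda}\|w\|^2$ whenever $\|w\| \ge R$; hence $\mathcal J_\lambda(w) \ge \tfrac14\|w\|^2$ for $\|w\|\ge R$, which forces $\mathcal J_\lambda(w)\to+\infty$ as $\|w\|\to\infty$ (coercivity). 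For boundedness from below one also needs control on the bounded set $\{\|w\|\le R\}$: from the estimate derived in the proof of Lemma~\ref{subquad} (using \eqref{stimavaloreassolutof} and \eqref{constants}), $|\Psi(w)| \le \tfrac{\varepsilon}{2}\|w\|^2 + \tfrac{m_\varepsilon}{q}c_q^q\|\alpha\|_\infty\|w\|^q$, so $\mathcal J_\lambda(w) \ge -\lambda(\tfrac{\varepsilon}{2}R^2 + \tfrac{m_\varepsilon}{q}c_q^q\|\alpha\|_\infty R^q)$ on that ball, a finite lower bound. Combining the two regimes gives $\inf_{\h}\mathcal J_\lambda > -\infty$.

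\medskip

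\textbf{Step 2: Palais--Smale sequences are bounded.} Let $\{w_j\}$ satisfy \eqref{ps1}--\eqref{ps2}. By \eqref{ps1}, $\mathcal J_\lambda(w_j)$ is bounded, and by Step~1's coercivity inequality $\mathcal J_\lambda(w_j) \ge \tfrac14\|w_j\|^2$ once $\|w_j\|\ge R$; therefore $\{w_j\}$ is bounded in $\h$. (Here I do not even need condition \eqref{ps2} — the sublinearity at infinity makes the argument work without an Ambrosetti--Rabinowitz condition, which is the whole point of distinguishing this case from Theorem~\ref{generalkernel0f222}.)

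\medskip

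\textbf{Step 3: Extracting a strongly convergent subsequence.} Since $\h$ is a Hilbert space and $\{w_j\}$ is bounded, pass to a subsequence with $w_j \weakto w$ in $\h$. By Lemma~\ref{immer}, the embedding $\h \hookrightarrow L^\nu(\M)$ is compact for $\nu\in[2,2^*)$, so $w_j \to w$ strongly in $L^2(\M)$ and in $L^q(\M)$ (with $q\in(2,2^*)$ as in \eqref{crescitaf}), and $w_j\to w$ a.e. along a further subsequence. The standard device is to test the derivative at $w_j$ against $w_j - w$: from \eqref{ps2}, $\langle \mathcal J_\lambda'(w_j), w_j-w\rangle \to 0$, i.e.
\[
\langle w_j, w_j - w\rangle - \lambda\int_\M \alpha(\sigma) f_+(w_j(\sigma))\,(w_j-w)(\sigma)\,dv_g \to 0 .
\]
The main obstacle, and the step deserving the most care, is showing the nonlinear integral term $\int_\M \alpha f_+(w_j)(w_j-w)\,dv_g \to 0$. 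For this I would invoke the growth bound \eqref{crescitaf} (established in Proposition~\ref{faracif}): $|f_+(t)| \le k(|t|+|t|^{q-1})$, so that by H\"older's inequality (splitting $\alpha = \alpha^{1/q}\cdot\alpha^{1/q'}$ as needed, using $\alpha\in L^\infty\cap L^1$) the family $\{\alpha f_+(w_j)\}$ is bounded in $L^{q'}(\M)$, while $w_j - w\to 0$ in $L^q(\M)$; pairing these gives the claim. Consequently $\langle w_j, w_j-w\rangle \to 0$; combined with $w_j\weakto w$ (so $\langle w, w_j - w\rangle\to 0$) this yields $\|w_j\|^2 \to \|w\|^2$, and together with weak convergence in the Hilbert space $\h$ this upgrades to $w_j \to w$ strongly in $\h$, which is the desired conclusion.
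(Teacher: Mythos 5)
Your proposal is correct and follows essentially the same route as the paper: coercivity and lower boundedness from the sub-quadratic behaviour of $\Psi$ (Lemma \ref{subquad}), boundedness of Palais--Smale sequences from coercivity alone, and strong convergence by testing $\mathcal J_\lambda'(w_j)$ against $w_j-w$ together with the compact embedding of Lemma \ref{immer}. The only (harmless) cosmetic differences are that you control the nonlinear term via $L^{q'}$-boundedness of $\alpha f_+(w_j)$ (which indeed works because $\alpha\in L^{\infty}(\M)\cap L^{1}(\M)$) instead of the paper's $\varepsilon$-splitting of \eqref{stimavaloreassolutof}, and you conclude with the Hilbert-space identity $\|w_j-w\|^2=\langle w_j,w_j-w\rangle-\langle w,w_j-w\rangle\to 0$ rather than invoking \cite[Proposition III.30]{brezis}.
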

\begin{proof}

\noindent Fix $\lambda>0$ and $0<\varepsilon<1/\lambda $.
Due to inequality \eqref{stimaadinfinito}, it follows that
\begin{align*}
\mathcal J_{\lambda}(w) &\geq \frac{1}{2}\|w\|^2 - \lambda\int_{\M} \alpha(x)|F_+(w(\sigma))|dv_g \\
                        &\geq \frac{1}{2}\|w\|^2 - \lambda \frac{\varepsilon}{2}\left\|w\right\|^2 -\lambda\frac{M_\varepsilon}{r}c_r^r\left\|\alpha\right\|_{\frac{2}{2-r}} \left\|w\right\|^{r}\\
                        &=\frac{1}{2}\left(1-{\lambda }{\varepsilon}\right)\|w\|^2-\lambda\frac{M_\varepsilon}{r}c_r^r\left\|\alpha\right\|_{\frac{2}{2-r}} \left\|w\right\|^{r},
\end{align*}
for every $w\in \h$. Consequently, the functional $\mathcal J_{\lambda}$ is bounded from below and coercive.

Now, let us prove that the second part of our main result holds. To this end, let $\{w_{j}\}_{j\in \N}\subset \h$ be a sequence, such that conditions \eqref{ps1} and \eqref{ps2} holds and set
\[
\|\mathcal J'_{\lambda}(w_j)\|_{*}:=\sup\Big\{\big|\langle\,\mathcal J_{\lambda}'(w_j),\varphi \rangle
\big|: \; \varphi\in \h,\,\textrm{ and}\, \|\varphi\|=1\Big\}.
\]
\indent Taking into account the coercivity of $\mathcal J_{\lambda}$, the sequence $\{w_{j}\}_{j\in \N}$ is necessarily bounded in $\h$. Since $\h$ is a reflexive space, there exists a subsequence, which for simplicity we still denote $\{w_{j}\}_{j\in \N}$, such that
$w_{j}\rightharpoonup w_\infty$ in $\h$, i.e.,
\begin{equation}\label{conv0}
\int_{\M}\left(\langle \nabla_g w_j(\sigma),\nabla_g \varphi(\sigma) \rangle_g +V(\sigma)w_j(\sigma)\varphi(\sigma)\right)dv_g\to
\end{equation}
\[
\qquad\qquad\qquad\qquad\int_{\M}\left(\langle \nabla_g w_\infty(\sigma),\nabla_g \varphi(\sigma) \rangle_g +V(\sigma)w_\infty(\sigma)\varphi(\sigma)\right)dv_g,
\]
as $j\to+\infty$, for any $\varphi\in \h$.

We will prove that the sequence $\{w_{j}\}_{j\in \N}$ strongly converges to $w_\infty\in \h$. Hence, one has
\begin{equation}\label{jj}
\langle \Phi'(w_{j}),w_{j}-w_\infty\rangle = \langle \mathcal J_{\lambda}'(w_j),w_j-w_\infty\rangle + \lambda\int_{\mathcal{M}}\alpha(\sigma)f(w_j(\sigma))(w_j-w_\infty)(\sigma)dv_g,
\end{equation}
where
\begin{eqnarray*}
\langle \Phi'(w_{j}),w_{j}-w_\infty\rangle  &=& \int_{\M}\left(|\nabla_g w_j(\sigma)|^2+V(\sigma)|w_j(\sigma)|^{2}\right)\,dv_g\\
               &&\qquad\qquad- \int_{\M}\left(\langle \nabla_g w_j(\sigma),\nabla_g w_\infty(\sigma) \rangle_g +V(\sigma)w_{j}(\sigma)w_\infty(\sigma)\right)dv_g.
\end{eqnarray*}

Since $\|\mathcal J_{\lambda}'(w_j)\|_{*}\to 0$ as $j\to+\infty$, and $\{w_j-w_\infty\}_{j\in \N}$ is bounded in $\h$, owing to $|\langle\mathcal J_{\lambda}'(w_j),w_j-w_\infty\rangle|\leq\|\mathcal J_{\lambda}'(w_j)\|_{*}\|w_j-w_\infty\|$, one has
\begin{eqnarray}\label{j2}
\langle \mathcal J_{\lambda}'(w_j),w_j-w_\infty\rangle\to 0
\end{eqnarray}
as $j\to+\infty$.

Next, let us set
\[
I:=\int_{\M}\alpha(\sigma)|f_+(w_j(\sigma))||(w_j-w_\infty)(\sigma)|dv_g.
\]
By \eqref{stimavaloreassolutof} and H\"{o}lder's inequality, it follows that
\begin{align*}
 I &\leq \varepsilon \nu_0\int_{\mathcal{M}} |w_j(\sigma)||(w_j-w_\infty)(\sigma)|dv_g \\
                                                & \quad+ m_\varepsilon\left\|\alpha\right\|_{\infty} \int_{\M} |w_j(\sigma)|^{q-1}|(w_j-w_\infty)(\sigma)|dv_g \\
                                                &\leq \varepsilon\nu_0\left\|w_j\right\|_{2} \left\|w_j-w_\infty\right\|_{2}\\ & \quad+ m_\varepsilon \left\|\alpha\right\|_{\infty} \left\|w_j\right\|_{q}^{q-1}\left\|w_j-w_\infty\right\|_{q}.
\end{align*}

Since $\varepsilon$ is arbitrary and the embedding $\h\hookrightarrow L^q(\M)$  is compact thanks to Proposition \ref{faracif}, we easily have
\indent \begin{eqnarray}\label{j3}
I=\int_{\M}\alpha(\sigma)|f_+(w_j(\sigma))||(w_j-w_\infty)(\sigma)|dv_g\to 0,
\end{eqnarray}
as $j\rightarrow +\infty$.

Now, by \eqref{jj}, \eqref{j2} and \eqref{j3} we deduce that
\begin{eqnarray*}\label{fin}
\langle \Phi'(w_{j}),w_{j}-w_\infty\rangle
\rightarrow 0,
\end{eqnarray*}
as $j\rightarrow +\infty$. Hence
\begin{eqnarray}\label{fin2}
\int_{\mathcal{M}}\left(|\nabla_g w_j(\sigma)|^2+V(\sigma)|w_j(\sigma)|^2\right)dv_g-\int_{\mathcal{M}}\left\langle \nabla_g w_j(\sigma),\nabla_g w_\infty(\sigma) \right\rangle_g dv_g
\end{eqnarray}
\[
\qquad\qquad -\int_{\mathcal{M}}V(\sigma)w_j(\sigma)w_\infty(\sigma)dv_g\rightarrow 0,
\]
as $j\rightarrow +\infty$.

Thus, by \eqref{fin2} and \eqref{conv0} it follows that
\[
\lim_{j\rightarrow +\infty}\int_{\mathcal{M}}\left(|\nabla_g w_j(\sigma)|^2+V(\sigma)|w_j(\sigma)|^2\right)dv_g = \int_{\mathcal{M}}\left(|\nabla_g w_\infty(\sigma)|^2+V(\sigma)|w_\infty(\sigma)|^2\right)dv_g
\]

Thanks to \cite[Proposition III.30]{brezis}, $w_j\rightarrow w_\infty$ in $\h$. This completes the proof.
\end{proof}

The next proposition will be crucial in order to correctly precise the statements of Theorem \ref{generalkernel0f}; see also Remark \ref{parametri}.

\begin{proposition}\label{JDE-F(u-0)0} Under the hypotheses of Theorem
  \ref{generalkernel0f} there exist $t_0\in (0,+\infty)$, $0<r<\rho$
  and $\varepsilon_0\in [1/2,1)$ such that
\begin{equation}\label{u0positive}
{\Psi(t_0w_{\rho,r}^{\varepsilon_0})=\int_\M \alpha(\sigma)F_+(t_0w_{\rho,r}^{\varepsilon_0})dv_g>0,}
\end{equation}
where the function $w_{\rho,r}^{\varepsilon_0}\in \h$ is given in
\eqref{TestFunction}.
\end{proposition}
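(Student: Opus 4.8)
The plan is to compute $\Psi(t_0 w_{\rho,r}^{\varepsilon_0})$ by splitting the integral according to the three regions defining $w_{\rho,r}^{\varepsilon_0}$, and then to make the (a priori possibly negative) contribution of the transition layer negligible by pushing $\varepsilon_0$ towards $1$. First I would use \eqref{Segno} to fix $t_0\in(0,+\infty)$ with $F(t_0)>0$, recalling that $F_+=F$ on $[0,+\infty)$ since $f_+=f$ there; I would also fix $0<r<\rho$ and $\alpha_0>0$ for which \eqref{ess} holds (these exist by the discussion preceding \eqref{TestFunction}), and regard $\varepsilon_0\in[1/2,1)$ as still free. Using properties $i_1)$--$i_3)$, namely $\operatorname{supp}(w_{\rho,r}^{\varepsilon_0})\subseteq A_r^{\rho}(\sigma_0)$, $w_{\rho,r}^{\varepsilon_0}\equiv 1$ on $A_{\varepsilon_0 r}^{\rho}(\sigma_0)$ and $0\leq w_{\rho,r}^{\varepsilon_0}\leq 1$, I would write
\begin{align*}
\Psi(t_0 w_{\rho,r}^{\varepsilon_0})
&= F(t_0)\int_{A_{\varepsilon_0 r}^{\rho}(\sigma_0)} \alpha(\sigma)\, dv_g \\
&\quad + \int_{A_{r}^{\rho}(\sigma_0)\setminus A_{\varepsilon_0 r}^{\rho}(\sigma_0)} \alpha(\sigma)\, F\bigl(t_0 w_{\rho,r}^{\varepsilon_0}(\sigma)\bigr)\, dv_g .
\end{align*}

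For the first term, since $A_{\varepsilon_0 r}^{\rho}(\sigma_0)\subseteq A_{r}^{\rho}(\sigma_0)$ and $F(t_0)>0$, estimate \eqref{ess} yields the lower bound $\alpha_0 F(t_0)\operatorname{Vol}_g(A_{\varepsilon_0 r}^{\rho}(\sigma_0))$. For the second term, on the transition layer one has $0\leq t_0 w_{\rho,r}^{\varepsilon_0}(\sigma)\leq t_0$, hence $|F(t_0 w_{\rho,r}^{\varepsilon_0}(\sigma))|\leq \max_{t\in(0,t_0]}|F(t)|$ (note $F(0)=0$), while $\alpha(\sigma)\leq\|\alpha\|_{\infty}$ a.e.; this bounds the second term below by $-\|\alpha\|_{\infty}\max_{t\in(0,t_0]}|F(t)|\operatorname{Vol}_g\bigl(A_{r}^{\rho}(\sigma_0)\setminus A_{\varepsilon_0 r}^{\rho}(\sigma_0)\bigr)$. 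Combining the two, it suffices to choose $\varepsilon_0\in[1/2,1)$ so that
\[
\frac{\operatorname{Vol}_g\bigl(A_{r}^{\rho}(\sigma_0)\setminus A_{\varepsilon_0 r}^{\rho}(\sigma_0)\bigr)}{\operatorname{Vol}_g\bigl(A_{\varepsilon_0 r}^{\rho}(\sigma_0)\bigr)}
< \frac{\alpha_0 F(t_0)}{\|\alpha\|_{\infty}\max_{t\in(0,t_0]}|F(t)|},
\]
whose right-hand side is a fixed strictly positive number once $t_0$ is fixed (indeed $F(t_0)>0$ forces $\max_{t\in(0,t_0]}|F(t)|>0$); this is precisely the inequality appearing in the statement of Theorem \ref{generalkernel0f}$(ii)$.

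The remaining step is to show this ratio can be made arbitrarily small. Writing $A_{\varepsilon r}^{\rho}(\sigma_0)=\{\sigma\in\M : \rho-\varepsilon r<d_g(\sigma_0,\sigma)<\rho+\varepsilon r\}$, the family $\{A_{\varepsilon r}^{\rho}(\sigma_0)\}_{\varepsilon}$ is non-decreasing in $\varepsilon$ with $\bigcup_{\varepsilon\in[1/2,1)} A_{\varepsilon r}^{\rho}(\sigma_0)=A_{r}^{\rho}(\sigma_0)$; moreover, since $(\M,g)$ is complete and non-compact, $d_g(\sigma_0,\cdot)$ is continuous and unbounded, so $A_{r}^{\rho}(\sigma_0)$ is a non-empty open set and $\operatorname{Vol}_g(A_{r}^{\rho}(\sigma_0))\in(0,+\infty)$. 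By continuity from below of the Radon measure $D\mapsto\operatorname{Vol}_g(D)$ one gets $\operatorname{Vol}_g(A_{\varepsilon_0 r}^{\rho}(\sigma_0))\to\operatorname{Vol}_g(A_{r}^{\rho}(\sigma_0))$, hence $\operatorname{Vol}_g(A_{r}^{\rho}(\sigma_0)\setminus A_{\varepsilon_0 r}^{\rho}(\sigma_0))\to 0$ as $\varepsilon_0\to 1^-$; therefore the displayed ratio tends to $0$ and any $\varepsilon_0\in[1/2,1)$ close enough to $1$ delivers \eqref{u0positive}. I expect this last measure-theoretic point --- that the transition layer is volumetrically negligible compared with the core annulus $A_{\varepsilon_0 r}^{\rho}(\sigma_0)$ --- to be the only step requiring genuine care; the rest reduces to the splitting of the integral above together with \eqref{ess} and the boundedness of $\alpha$ and of $F$ on $[0,t_0]$.
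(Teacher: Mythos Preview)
Your proof is correct and follows essentially the same route as the paper: fix $t_0$ via \eqref{Segno}, split the integral over the core annulus and the transition layer, bound the core from below using \eqref{ess} and the layer from above using $\|\alpha\|_\infty$ and $\max_{t\in(0,t_0]}|F(t)|$, then let $\varepsilon_0\to 1^-$ so that the layer volume vanishes while the core volume tends to $\operatorname{Vol}_g(A_r^\rho(\sigma_0))>0$. Your extra care in justifying the measure-theoretic limit (continuity from below and the non-degeneracy of $A_r^\rho(\sigma_0)$) is a welcome addition but does not depart from the paper's argument.
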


\begin{proof} By \eqref{Segno} there exists $t_0\in (0,+\infty)$ such that
$F(t_0)>0.$ Moreover, let $\varepsilon \in [1/2,1)$ and let $0<r<\rho$ be such that
\eqref{ess} holds. We have
\begin{align*}
\int_{\M}
\alpha(\sigma) F_+(t_0w_{\rho,r}^{\varepsilon}(\sigma))\,dv_g & = \int_{\M \setminus A_r^{\rho}(\sigma_0)}
\alpha(\sigma)F_+(t_0w_{\rho,r}^{\varepsilon}(\sigma))\,dv_g\\
& \quad+\int_{A_{\varepsilon r}^{\rho}(\sigma_0)}
\alpha(\sigma) F_+(t_0w_{\rho,r}^{\varepsilon}(\sigma))\,dv_g\\
&\quad+\int_{A_{r}^{\rho}(\sigma_0)\setminus A_{\varepsilon r}^{\rho}(\sigma_0)}
\alpha(\sigma)F_+(t_0w_{\rho,r}^{\varepsilon}(\sigma))\,dv_g.
\end{align*}
Hence
\begin{align*}\label{laprima}
\int_{\M}
\alpha(\sigma) F_+(t_0w_{\rho,r}^{\varepsilon}(\sigma))\,dv_g & \geq \alpha_0F(t_0)\operatorname{Vol}_g(A_{\varepsilon r}^{\rho}(\sigma_0))\\
& \quad -\int_{A_{r}^{\rho}(\sigma_0)\setminus A_{\varepsilon r}^{\rho}(\sigma_0)}
\alpha(\sigma)|F_+(t_0w_{\rho,r}^{\varepsilon}(\sigma))|\,dv_g.\nonumber
\end{align*}
Now, setting
\[
I:=\int_{A_{r}^{\rho}(\sigma_0)\setminus A_{\varepsilon r}^{\rho}(\sigma_0)}
\alpha(\sigma)|F_+(t_0w_{\rho,r}^{\varepsilon}(\sigma))|\,dv_g,
\]
by \eqref{TestFunction} one has
\[
I=\int_{A_{r}^{\rho}(\sigma_0)\setminus A_{\varepsilon r}^{\rho}(\sigma_0)}
\alpha(\sigma)\left|F_+\left(\frac{2t_0}{r} \left(r- |d_g(\sigma_0,\sigma)-\rho| \right)\right)\right|\,dv_g.
\]
Moreover, by $(j_2)$, it follows that
\begin{equation}\label{laseconda}
I\leq \|\alpha\|_{\infty}\max_{t\in(0,t_0]}
 |F(t)|\operatorname{Vol}_g(A_{r}^{\rho}(\sigma_0)\setminus A_{\varepsilon r}^{\rho}(\sigma_0)).
\end{equation}
Then, inequalities \eqref{laseconda} and \eqref{laseconda} yield
\begin{align}\label{denomi}
\int_{\M}
\alpha(\sigma) F_+(t_0w_{\rho,r}^{\varepsilon}(\sigma))\,dv_g & \geq \alpha_0F(t_0)\operatorname{Vol}_g(A_{\varepsilon r}^{\rho}(\sigma_0))\\
& \quad -\|\alpha\|_{\infty}\max_{t\in(0,t_0]}
 |F(t)|\operatorname{Vol}_g(A_{r}^{\rho}(\sigma_0)\setminus A_{\varepsilon r}^{\rho}(\sigma_0)). \nonumber
\end{align}
Since $\operatorname{Vol}_g(A_{r}^{\rho}(\sigma_0)\setminus A_{\varepsilon r}^{\rho}(\sigma_0))\rightarrow 0$, as $\varepsilon\to 1^-$, of course
\[
\|\alpha\|_{\infty}\max_{t\in(0,t_0]}
 |F(t)|\operatorname{Vol}_g(A_{r}^{\rho}(\sigma_0)\setminus A_{\varepsilon r}^{\rho}(\sigma_0))\rightarrow 0,
\]
as $\varepsilon\to 1^-$. Moreover,
\begin{equation*}
\operatorname{Vol}_g(A_{\varepsilon r}^{\rho}(\sigma_0))\rightarrow \operatorname{Vol}_g(A_{r}^{\rho}(\sigma_0))
\end{equation*}
as $\varepsilon\to 1^-$. Thus, there exists $\varepsilon_0>0$ such that
\[
\alpha_0F(t_0)\operatorname{Vol}_g(A_{\varepsilon_0 r}^{\rho}(\sigma_0))>
 \|\alpha\|_{\infty}\max_{t\in(0,t_0]}
 |F(t)|\operatorname{Vol}_g(A_{r}^{\rho}(\sigma_0)\setminus A_{\varepsilon_0 r}^{\rho}(\sigma_0)).
\]
\noindent Thus the function
$w_0^{\varepsilon_0}:=t_0w_{\rho,r}^{\varepsilon_0}\in\h$ verifies inequality (\ref{u0positive}).
\end{proof}

We are in position now to prove item $(ii)$ of Theorem \ref{generalkernel0f}.\par\smallskip
\noindent {\emph{First solution via direct minimization}} -
By Proposition \ref{JDE-F(u-0)0} the number
\begin{equation}\label{hatlambda}
\lambda^\star:= \inf_{\underset{w\in \h}{\Psi(w)>0}}\frac{\Phi(w)}{\Psi(w)}
\end{equation}
is well--defined and, owing to Lemma \ref{subquad}, one has that
$\lambda^{\star}\in (0,+\infty)$.\par
Fixing $\lambda>\lambda^\star$ and choosing $w_\lambda^\star\in \h$
with $\Psi(w_\lambda^\star)>0$ and
\[
\lambda^\star \leq \frac{\Phi(w_\lambda^\star)}{\Psi(w_\lambda^\star)}<\lambda,
\]
it follows that
\[
c_{1,\lambda}:= \inf_{w\in \h} \mathcal J_\lambda(w) \leq \mathcal J_\lambda(w_\lambda^\star)<0.
\]
\indent Since $\mathcal J_\lambda$ is bounded from below and satisfies the Palais--Smale condition (PS)$_{c_{1,\lambda}}$, it follows that  $c_{1,\lambda}$ is a critical value of $\mathcal J_\lambda$. Hence, there exists $w_{1,\lambda}\in \h\setminus\{0\}$ such that
\[
\mathcal J_\lambda(w_{1,\lambda})=c_{1,\lambda} <0 \quad \mbox{ and } \quad \mathcal J'_\lambda(w_{1,\lambda})=0.
\]
\noindent {\emph{Second solution via Mountain Pass Theorem}} - Fix $\lambda>\lambda^\star$, where the parameter $\lambda^\star$ is defined in \eqref{hatlambda}, and apply \eqref{stimavaloreassolutof} with $\varepsilon:=1/(2\lambda)$. For each $w\in \h$ one has
\begin{align*}
\mathcal J_\lambda(w) & = \frac{1}{2}\left\|w\right\|^2 -\lambda\Psi(w)\\
                   & \geq \frac{1}{2}\left\|w\right\|^2 - \frac{\lambda}{2}\varepsilon\nu_0\left\|w\right\|_2^2 - \frac{\lambda}{q}\left\|\alpha\right\|_{\infty} m_\lambda\left\|w\right\|_{q}^q\\
									 & \geq \frac{1-\lambda\varepsilon}{2} \left\|w\right\|^2 - \frac{\lambda}{q}\left\|\alpha\right\|_{\infty} m_\lambda c_q^q\left\|w\right\|^q.
\end{align*}
Setting
\[
r_\lambda:= \min\left\{\left\|w_\lambda^\star\right\|, \left(\frac{q}{4\lambda\left\|\alpha\right\|_{\infty} m_\lambda c_q^q}\right)^{1/(q-2)}\right\},
\]
since $\mathcal J_\lambda(0)=0$, it easily seen that
\[
\inf_{\left\|w\right\|=r_\lambda}\mathcal J_\lambda (w) > \mathcal J_\lambda(0) > \mathcal J_\lambda(w_\lambda^\star).
\]
Then the energy functional possesses the usual structure of the mountain pass geometry.\par
 Therefore, by using Lemma \ref{cps}, we can use the Mountain Pass Theorem to obtain the existence of $w_{2,\lambda}\in \h$ so that $\mathcal J'_\lambda (w_{2,\lambda})=0$ and $\mathcal J_\lambda (w_{2,\lambda})=c_{2,\lambda}$, where the level $c_{2,\lambda}$ has the well--known variational characterization:
\[
c_{2,\lambda}:=\inf_{\gamma\in\Gamma}\max_{t\in[0,1]}\mathcal J_\lambda (\gamma(t)),
\]
in which
\[
\Gamma:=\left\{\gamma\in C^0([0,1];\h): \gamma(0)=0, \gamma(1)=w_\lambda^\star\right\}.
\]
\indent Thanks to the fact that
\[
c_{2,\lambda}\geq \inf_{\left\|w\right\|=r_\lambda}\mathcal J_\lambda (w)>0,
\]
the existence of two distinct non--trivial weak solutions to Problem \eqref{problema} is proved.\par
 Furthermore, invoking Proposition \ref{faracif}, it follows that $w_{i,\lambda}\in L^{\infty}(\M)\setminus\{0\}$, with $i\in\{1,2\}$.
This completes the proof.

\begin{remark}\label{parametri}
A natural question arises about the parameter $\lambda^{\star}$ obtained in Theorem \ref{generalkernel0f}: can we estimate it?
Indeed, the proof of Theorem \ref{generalkernel0f} gives an exact, but quite involved form of this positive real parameter.
\noindent We give an upper estimate of it which can be easily
calculated by using the test function given in \eqref{TestFunction}. This fact can be done in terms of some analytical and geometrical constants.
Since
\[
\lambda^\star:= \inf_{\substack{w \in H^1_V(\mathcal{M}) \\ \Psi (w) > 0}}\frac{\Phi(w)}{\Psi(w)},
\]
bearing in mind that $\Psi(w_0^{\varepsilon_0})>0$, where
\begin{equation} \label{TestFunction000}
w_0^{\varepsilon_0}=t_0w_{\rho,r}^{\varepsilon}(\sigma):=\left\{
\begin{array}{ll}
0 & \mbox{ if $\sigma \in \M \setminus A_r^{\rho}(\sigma_0)$} \\[5pt]
t_0 & \mbox{ if $\sigma \in A_{\varepsilon r}^{\rho}(\sigma_0)$}\\[5pt]
 t_0\displaystyle\left(\frac{r- |d_g(\sigma_0,\sigma)-\rho|}{(1-\varepsilon)r}\right)
& \mbox{ if $\sigma \in A_{r}^{\rho}(\sigma_0)\setminus A_{\varepsilon r}^{\rho}(\sigma_0)$},
\end{array}
\right.
\end{equation}
one has
\[
\lambda^\star \leq \frac{\Phi(w_0^{\varepsilon_0})}{\Psi(w_0^{\varepsilon_0})}.
\]
Inequalities \eqref{stimazz} and \eqref{denomi} yield $\lambda^{\star}\leq \lambda_0$, where
\begin{equation}\label{lambda0}
\lambda_0:=\frac{m_{V}t_0^2\left(1+\frac{1}{(1-\varepsilon_0)^2r^2}\right)\operatorname{Vol}_g(A_r^{\rho}(\sigma_0))}{\alpha_0F(t_0)\operatorname{Vol}_g(A_{\varepsilon_0 r}^{\rho}(\sigma_0))-
 \|\alpha\|_{\infty}\max_{t\in(0,t_0]}
 |F(t)|\operatorname{Vol}_g(A_{r}^{\rho}(\sigma_0)\setminus A_{\varepsilon_0 r}^{\rho}(\sigma_0))}.
\end{equation}
Of course, if the nonlinear term $f$ is non--negative the potential
$F$ is non--decreasing. In this case
$\max_{t\in(0,t_0]} |F(t)|=F(t_0)$ and
 \begin{equation*}\label{lambda0fpositive}
\lambda_0:=\frac{m_{V}t_0^2\left(1+\frac{1}{(1-\varepsilon_0)^2r^2}\right)\operatorname{Vol}_g(A_r^{\rho}(\sigma_0))}{F(t_0)(\alpha_0\operatorname{Vol}_g(A_{\varepsilon_0 r}^{\rho}(\sigma_0))-
 \|\alpha\|_{\infty}\operatorname{Vol}_g(A_{r}^{\rho}(\sigma_0)\setminus A_{\varepsilon_0 r}^{\rho}(\sigma_0)))}.
\end{equation*}
Then, the conclusions of Theorem \ref{generalkernel0f} are valid for every $\lambda>\lambda_0$.
We also notice that
\begin{equation}\label{hatlambda2}
\lambda^\star:= \inf_{\underset{w\in\h}{\Psi(w)>0}}\frac{\Phi(w)}{\Psi(w)}\geq \frac{\nu_0}{c_f\left\|\alpha\right\|_{\infty}}.
\end{equation}
Indeed, integrating \eqref{c6}, one clearly has
\begin{equation}\label{integrata}
|F_+(t)|\leq \frac{c_f}{2}|t|^{2},\quad \forall\,t\in \R.
\end{equation}
Thus, by \eqref{integrata}, it follows that
\begin{align*}
\Psi(w) & \leq \int_{\M} \alpha(\sigma)|F_+(w(\sigma))|dv_g\\
                   & \leq c_f\frac{\|\alpha\|_{\infty}}{2\nu_0} \int_{\M}V(\sigma)|w(\sigma)|^2dv_g \\
                   & \leq c_f\frac{\|\alpha\|_{\infty}}{2\nu_0} \left(\int_{\mathcal{M}}|\nabla_g w(\sigma)|^2dv_g+\int_{\M}V(\sigma)|w(\sigma)|^2dv_g\right) \\
                   & = c_f\frac{\|\alpha\|_{\infty}}{2\nu_0} \|w\|^2,
\end{align*}
for every $w\in \h$. The above relation immediately yields
\begin{equation*}
\inf_{\underset{w\in\h}{\Psi(w)>0}}\frac{\Phi(w)}{\Psi(w)}=\frac{1}{2}\inf_{\underset{w\in\h}{\Psi(w)>0}}\frac{\|w\|^2}{\Psi(w)}\geq \frac{\nu_0}{c_f\left\|\alpha\right\|_{\infty}}.
\end{equation*}
 Hence, inequality \eqref{hatlambda2} is verified. We point out that no information is available concerning the number of solutions of problem \eqref{problema} whenever
\[
\lambda\in\left[\frac{\nu_0}{c_f\left\|\alpha\right\|_{\infty}},\lambda^{\star}\right].
\]
\end{remark}

\begin{remark}
For an explicit computation of the parameter $\lambda_0$ whose expression is given in \eqref{lambda0} it is necessary to compute the volume of certain annulus--type domains on $(\M,g)$. This fact in general presents several difficulties, since the computations are related to the volume of the geodesic balls. For instance, in the Cartan--Hadamard framework  investigated in \cite{FFK}, the authors recall some well--know facts on the asymptotic behaviour of the real function given by
\[
\phi(r):= \frac{\operatorname{Vol}_g(B_\sigma(r))}{V_{c,d}(r)},\quad\textrm{ for\,\,every\,\, } \sigma\in\M
\]
where $V_{c,d}(r)$ denotes the volume of the ball with radius $r>0$ in the
$d$--dimensional space form that is, either the hyperbolic space with sectional
curvature $c$, when $c$ is negative, or the
Euclidean space in the case $c=0$. We notice that, for every $\sigma\in \M$, one has
\[
\lim_{r\rightarrow 0^+}\phi(r)=1,
\]
and by Bishop--Gromov's result, $h$ is non--decreasing. As a byproduct of the above remarks, it follows that
\begin{equation}\label{Bishop}
\operatorname{Vol}_g(B_\sigma(r))\geq V_{c,d}(r),
\end{equation}
for every $\sigma\in\M$ and $r>0$. When equality holds in \eqref{Bishop}, the value of the sectional curvature of the manifold $(M,g)$ is $c$.
\end{remark}

\begin{remark}\label{parametri2}
Assume that condition $(As_{R}^{H, g})$ holds. Then, if $\sigma\in\M$, by \cite[Corollary 2.17]{PRS}, one has
\begin{equation*}
\frac{\operatorname{Vol}_g(B_\sigma(\rho))}{\operatorname{Vol}_g(B_\sigma(r))}\leq e^{(d-1)b_0}\left(\frac{\rho}{r}\right)^{d},
\end{equation*}
for every $0<r<\rho$, where we set
\[
b_0:=\int_0^{\infty}tH(t)dt.
\]
Moreover, if $\omega_d$ denotes the Euclidean volume of the unit ball in $\R^{d}$, the following inequality holds
\begin{equation}\label{palla1}
\operatorname{Vol}_g(B_\sigma(\rho))\leq e^{(d-1)b_0}\omega_d\rho^d,
\end{equation}
for every $\rho>0$ and $\sigma\in \M$. Then, inequality \eqref{palla1} yields
\begin{equation}\label{anello1}
\operatorname{Vol}_g(A_r^{\rho}(\sigma_0))\leq e^{(d-1)b_0}\omega_d(\rho+r)^d,
\end{equation}
and
\begin{equation}\label{anello2}
\operatorname{Vol}_g(A_r^{\rho}(\sigma_0)\setminus A_{\varepsilon_0 r}^{\rho}(\sigma_0))\leq e^{(d-1)b_0}\omega_d(\rho+r)^d.
\end{equation}
Suppose that
\begin{equation}\label{Fru3}
    \operatorname{Vol}_g(A_{\varepsilon_0 r}^{\rho}(\sigma_0))>
 \frac{e^{(d-1)b_0}\omega_d(\rho+r)^d}{\alpha_0F(t_0)}\|\alpha\|_{\infty}\max_{t\in(0,t_0]}|F(t)|.
     \end{equation}
\noindent By \eqref{anello1} and \eqref{Fru3} we notice that
\begin{equation}\label{ultima}
    \frac{\alpha_0F(t_0)}{\|\alpha\|_{\infty}\max_{t\in(0,t_0]}
 |F(t)|}>\frac{e^{(d-1)b_0}\omega_d(\rho+r)^d}{\operatorname{Vol}_g(A_{\varepsilon_0 r}^{\rho}(\sigma_0))}\geq
 \frac{\operatorname{Vol}_g(A_{r}^{\rho}(\sigma_0)\setminus A_{\varepsilon_0 r}^{\rho}(\sigma_0))}{\operatorname{Vol}_g(A_{\varepsilon_0 r}^{\rho}(\sigma_0))}.
     \end{equation}
\noindent Then \eqref{anello2} and \eqref{ultima} yield
\begin{equation}\label{anello3}
\lambda_0\leq \frac{m_{V}t_0^2\left(1+\frac{1}{(1-\varepsilon_0)^2r^2}\right)e^{(d-1)b_0}\omega_d(\rho+r)^d}{\alpha_0F(t_0)\operatorname{Vol}_g(A_{\varepsilon_0 r}^{\rho}(\sigma_0))-
 \|\alpha\|_{\infty}\max_{t\in(0,t_0]}
 |F(t)|e^{(d-1)b_0}\omega_d(\rho+r)^d}.
\end{equation}
where $\lambda_0$ is given in \eqref{lambda0}.
     Thus, by \eqref{anello3} and \eqref{ultima}, the conclusion of Theorem \ref{generalkernel0f} - part $(ii)$ is valid for every
    \[
    \lambda>\frac{m_{V}t_0^2\left(1+\frac{1}{(1-\varepsilon_0)^2r^2}\right)e^{(d-1)b_0}\omega_d(\rho+r)^d}{\alpha_0F(t_0)\operatorname{Vol}_g(A_{\varepsilon_0 r}^{\rho}(\sigma_0))-
 \|\alpha\|_{\infty}\max_{t\in(0,t_0]}
 |F(t)|e^{(d-1)b_0}\omega_d(\rho+r)^d},
    \]
    where
    $  m_{V}:=\max\left\{1,\max_{\sigma\in A_r^{\rho}(\sigma_0)}V(\sigma)\right\}$,
    $0<r<\rho$ such that
    \[
    \operatorname{ess\, inf}\limits_{\sigma\in A_{r}^{\rho}}\alpha(\sigma)\geq \alpha_0>0,
    \]
    $t_0\in (0,+\infty)$ and $F(t_0)>0$ with
    \begin{equation*}
    \frac{\alpha_0F(t_0)}{\|\alpha\|_{\infty}\max_{t\in(0,t_0]}
 |F(t)|}>\frac{e^{(d-1)b_0}\omega_d(\rho+r)^d}{\operatorname{Vol}_g(A_{\varepsilon_0 r}^{\rho}(\sigma_0))},
     \end{equation*}
     for some $\varepsilon_0\in [1/2,1)$.
\end{remark}

\begin{remark}\label{parametri3}
A different form of Theorem \ref{generalkernel0f} can be obtained in the Euclidean case (see Theorem \ref{boccia}). In such a case $(As_{R}^{H, g})$ holds with $H=0$.
A concrete upper bound (namely $\lambda^{0}_E$) for the parameter $\lambda^{\star}_E$ can be obtained exploiting \eqref{lambda0} and observing that, in this context, if $a<b$, one has
\begin{equation*}
\operatorname{Vol}_g(A_a^{b}(\sigma_0))=\omega_d((a+b)^d-(b-a)^d),
\end{equation*}
where $\omega_d$ is the measure of the unit ball in $\R^d$ and $\sigma_0$ is the origin. More precisely, a direct computation ensures that
\begin{equation}\label{lambda0E}
\lambda^{0}_E:=\frac{m_{V}t_0^2\left(1+\frac{1}{(1-\varepsilon_0)^2r^2}\right)((r+\rho)^d-(\rho-r)^d)}{\lambda_D(t_0,\varepsilon_0,r,\rho,\alpha,f,d)},
\end{equation}
where the expression of $\lambda_D(t_0,\varepsilon_0,r,\rho,\alpha,f,d)$ has the following form
\begin{multline*}
\lambda_D(t_0,\varepsilon_0,r,\rho,\alpha,f,d):=
\alpha_0F(t_0)((\varepsilon_0r+\rho)^d-(\rho-\varepsilon_0r)^d) \\
 {}-\|\alpha\|_{\infty}\max_{t\in(0,t_0]}
 |F(t)|[((r+\rho)^d-(\rho-r)^d))-((\varepsilon_0r+\rho)^d-(\rho-\varepsilon_0r)^d)].
\end{multline*}
\end{remark}
A simple consequence of Theorem \ref{boccia} is the following application in the Euclidean setting.
\begin{ex}\label{esempio1}
Let $\R^d$ be the $d$--dimensional $(d\geq 3)$ Euclidean space and let $V\in C^0(\R^d)$ be a potential such that $v_0=\inf_{x\in \R^d}V(x)>0$, and
\[
\lim_{|x|\rightarrow +\infty}\int_{B(x)}\frac{dy}{V(y)}=0,
\]
where $B(x)$ denotes the unit ball of center $x\in \R^{d}$. Then, owing to Corollary \ref{boccia}, there exists $\lambda^{0}_E>0$ such that the following problem
\begin{equation*}
\left\{
\begin{array}{ll}
-\Delta u+V(x)u=\displaystyle\frac{\lambda}{(1+|x|^d)^2} \log(1+u^2) & \mbox{ in } \R^d\\
u\geq 0 & \mbox{ in } \R^d\\
u\rightarrow 0 & \mbox{ as } |x|\rightarrow +\infty,
\end{array}\right.
\end{equation*}
admits at least two distinct and non--trivial weak solutions $u_{1,\lambda}, u_{2,\lambda}\in L^{\infty}(\R^d) \cap H_V^{1}(\R^d)$, provided that $\lambda>\lambda^0_E$. A concrete value of $\lambda^0_E$ can be computed by using \eqref{lambda0E}.
\end{ex}

We end this section giving a short and not detailed proof of Theorem \ref{generalkernel0f222}.\par

\medskip

\noindent \emph{Proof of Theorem \ref{generalkernel0f222}} - Fix $\lambda>0$. On account of Lemma \ref{immer} and hypotheses \eqref{supf}, as well as \eqref{supg} and \eqref{eq:mu0}, one can apply in a standard manner the Mountain Pass Theorem of Ambrosetti--Rabinowitz obtaining a critical point $w\in \h$ with positive energy--level. For this, we have to check that the functional $\mathcal J_\lambda$, given in \eqref{JKlambda+},
 has a particular geometric structure (as stated, e.g., in conditions~($1^\circ$)--($3^\circ$) of \cite[Theorem~6.1]{struwe})
and satisfies the Palais--Smale compactness condition (see, for instance, \cite[page~70]{struwe}). Furthermore, by Proposition \ref{faracif}, it follows that $w\in L^{\infty}(\M)\setminus\{0\}$. We notice that the analysis that we perform here in order to use
Mountain Pass Theorem is quite general and may be suitable for other goals
too. See, for instance, \cite[Theorem 1.1 - part $(i)$]{BK}.

\section{Proof of Theorem \ref{Main1}}

A non--local counterpart of Theorem \ref{Main1} was proved in \cite{GMBSe} and the literature is rich of many
extensions to elliptic problems defined on bounded Euclidean domains. Moreover, a suitable Euclidean version of Theorem \ref{Main1}
was recently performed in \cite{Molica} for Schr\"{o}dinger equations in presence of symmetries. In the sequel, we
will adapt the methods developed in \cite{Molica, GMBSe} to our non--compact abstract manifolds setting. Of course, when using direct minimization, we need that the functional
is bounded from below and, in this case, we look for its global minima,
which are the most natural critical points. In looking for global minima of a
functional, the two relevant notions are the weakly lower semicontinuity and
the coercivity, as stated in the Weierstrass Theorem.
The coercivity of the functional assures that the minimizing sequence is
bounded, while the semicontinuity gives the existence of the minimum for
the functional. We would emphasize the fact that the result presented here is not be achieved by direct minimization arguments.
See \cite{MoRa}, for related topics.
\subsection{Some preliminary Lemmas}

\begin{proposition}\label{C1}
Assume that $f:[0,+\infty)\rightarrow \mathbb{R}$ is a continuous function such that condition \eqref{crescita2}
holds, for some $q\in \left[2,2^*\right]$. Let $\Psi:\h\rightarrow \mathbb{R}$ defined by
\[
\Psi(w):=\int_{\M}\alpha(\sigma)F_+(w(\sigma))dv_g,\quad \forall\, w\in \h
\]
with $\alpha\in L^{\infty}(\M)_{+}\cap L^{p}(\M)$, where $p:=q/(q-1)$ is the conjugate Sobolev exponent of $q$, and $F_+(t):=\int_0^{t}f_+(\tau)d\tau$, for every $t\in \R$, with $f_+$ given in \eqref{fpiu}. Then, the functional $\Psi$ is continuously G\^{a}teaux differentiable on $\h$.
\end{proposition}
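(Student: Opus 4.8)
The plan is to run the classical argument showing that a weighted superposition (Nemytskii) functional is of class $C^1$ on a Sobolev space, here exploiting the continuous embedding $\h\hookrightarrow L^q(\M)$ (valid for $q\in[2,2^*]$ by the embedding results recalled in Section~\ref{sec:preliminaries}) together with the integrability splitting $\alpha\in L^{\infty}(\M)\cap L^{p}(\M)$, where $\frac1p+\frac1q=1$, so that $(q-1)p=q$.

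\textbf{Step 1 (candidate derivative).} I would introduce
\[
\langle \Psi'(w),\varphi\rangle:=\int_{\M}\alpha(\sigma)f_+(w(\sigma))\varphi(\sigma)\,dv_g,\qquad w,\varphi\in\h,
\]
and check that this is a well-defined bounded linear functional. By \eqref{crescita2} one has $|\alpha f_+(w)\varphi|\le\beta_f|\alpha|(1+|w|^{q-1})|\varphi|$ a.e.; two applications of H\"older's inequality -- pairing $|\alpha|\,|\varphi|$ via $L^p$--$L^q$ duality, and $|\alpha|\,|w|^{q-1}\,|\varphi|$ by putting $\alpha\in L^{\infty}$, $|w|^{q-1}\in L^{p}$ (since $(q-1)p=q$), $\varphi\in L^q$ -- give $|\langle\Psi'(w),\varphi\rangle|\le\beta_f\big(\|\alpha\|_p+\|\alpha\|_{\infty}\|w\|_q^{q-1}\big)\|\varphi\|_q$, which is controlled by $\|\varphi\|$ through \eqref{constants}. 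Hence $\Psi'(w)\in\h^{*}$.

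\textbf{Step 2 (G\^ateaux differentiability).} For fixed $w,\varphi\in\h$ and $t\in(0,1]$ I would write $F_+(w+t\varphi)-F_+(w)=t\int_0^1 f_+(w+st\varphi)\varphi\,ds$; dividing by $t$ and multiplying by $\alpha$, the integrand converges a.e. to $\alpha f_+(w)\varphi$ as $t\to0^{+}$ by continuity of $f_+$, and for $|t|\le1$ it is dominated by $\beta_f|\alpha|\big(1+(|w|+|\varphi|)^{q-1}\big)|\varphi|$, which lies in $L^1(\M)$ by the same H\"older bound as in Step~1 (with $|w|+|\varphi|\in L^q(\M)$). The dominated convergence theorem then yields $\lim_{t\to0}\tfrac{\Psi(w+t\varphi)-\Psi(w)}{t}=\langle\Psi'(w),\varphi\rangle$, so $\Psi$ is G\^ateaux differentiable with differential $\Psi'$.

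\textbf{Step 3 (continuity of $w\mapsto\Psi'(w)$) and the main obstacle.} Let $w_n\to w$ in $\h$. H\"older's inequality and \eqref{constants} give $\|\Psi'(w_n)-\Psi'(w)\|_{*}\le c_q\,\|\alpha(f_+(w_n)-f_+(w))\|_p$, so the whole matter reduces to showing $\alpha(f_+(w_n)-f_+(w))\to0$ in $L^p(\M)$ -- and this is the only genuinely delicate point. I would argue via subsequences: from any subsequence, using $w_n\to w$ in $L^q(\M)$, extract a further one (not relabeled) with $w_n\to w$ a.e. and $|w_n|\le h$ a.e. for some $h\in L^q(\M)$; then $|\alpha(f_+(w_n)-f_+(w))|^p\to0$ a.e. by continuity of $f_+$, while \eqref{crescita2} and $(q-1)p=q$ furnish the majorant $|\alpha(f_+(w_n)-f_+(w))|^p\le(4\beta_f)^p\big(|\alpha|^p+\|\alpha\|_{\infty}^p h^q\big)\in L^1(\M)$, so dominated convergence forces $L^p$-convergence along that subsequence; since this holds for a subsequence of every subsequence, the full sequence converges. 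Thus $\Psi'$ is continuous and $\Psi\in C^1(\h)$. The obstacle is exactly this upgrade from $L^q$-convergence of $w_n$ to $L^p$-convergence of the composed weighted terms; the standard device -- extracting an a.e.-convergent subsequence admitting an $L^q$ majorant (the ``converse'' of the dominated convergence theorem) -- is what resolves it, and the subcritical hypothesis $q\le2^*$ together with $\alpha\in L^{\infty}\cap L^{p}$ is precisely what guarantees that the dominating function lies in $L^1(\M)$.
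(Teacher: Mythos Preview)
Your proof is correct and follows the same overall scheme as the paper: well-definedness via H\"older, G\^ateaux differentiability via the mean value theorem and dominated convergence, and continuity of $\Psi'$ by passing to a subsequence with an $L^q$ majorant (the partial converse of dominated convergence) and applying Lebesgue's theorem again.

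The one genuine difference lies in Step~3. The paper does \emph{not} produce a global $L^1$ dominant for $|\alpha(f_+(w_n)-f_+(w))|^p$. Instead, it observes only that $1+\eta^{q-1}\in L^p_{\mathrm{loc}}(\M)$ (the constant $1$ is not globally $L^p$ on a non-compact manifold), whence $f_+(w_n)\to f_+(w)$ in $L^p(B_{\sigma_0}(\varrho))$ for every $\varrho>0$; it then runs an $\varepsilon$-splitting over a large geodesic ball and its complement, using the tail integrability of $\alpha\in L^p(\M)$ and of $\eta\in L^q(\M)$ to make the outer pieces small. Your argument is cleaner: by keeping the weight $\alpha$ inside the pointwise bound you get $|\alpha(f_+(w_n)-f_+(w))|^p\lesssim |\alpha|^p+\|\alpha\|_\infty^p\,h^q$, and both summands are globally integrable precisely because $\alpha\in L^p\cap L^\infty$ and $h\in L^q$. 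This bypasses the localization step entirely. The paper's route, on the other hand, makes more transparent where each integrability hypothesis on $\alpha$ is spent.
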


\begin{proof}
 First of all we notice that the functional $\Psi$ is well--defined. Indeed, owing to \eqref{crescita2}, bearing in mind the assumptions on the weight $\alpha$ and by using the H\"{o}lder inequality one has that
\begin{align}\label{ineq}
  \int_{\M} \alpha(\sigma)F_+(w(\sigma))dv_g &\leq  \beta_f\int_{\M}\alpha(\sigma)|w(\sigma)|dv_g+\frac{\beta_f}{q}\int_{\M}\alpha(\sigma)|w(\sigma)|^{q}dv_g\nonumber\\
   &\leq \beta_f\left(\int_{\M}|\alpha(\sigma)|^pdx\right)^{1/p}\left(\int_{\M}|w(\sigma)|^qdv_g\right)^{1/q}\\
   &\qquad{}+\frac{\beta_f}{q}\|\alpha\|_{\infty}\int_{\M}|w(\sigma)|^qdv_g,\nonumber
\end{align}
for every $w\in \h$.\par
Since the Sobolev space $\h$ is continuously embedded in the Lebesgue space $L^q(\M)$, inequality \eqref{ineq} yields
\begin{equation*}\label{ineq2}
\Psi(w) \leq \beta_f\left(\|\alpha\|_p\|w\|_q+\frac{1}{q}\|\alpha\|_{\infty}\|w\|_q^q\right)<+\infty,
\end{equation*}
for every $w\in \h$.\par
Now, let us compute the G\^{a}teaux derivative $\Psi':\h\rightarrow (\h)^{*}$ such that $w\mapsto \langle \Psi'(w),v\rangle$, where
\begin{align*}
\langle \Psi'(w),v\rangle &:=\lim_{h\rightarrow 0}\frac{\Psi(w+hv)-\Psi(w)}{h} \\
&=\lim_{h\rightarrow 0}\frac{\displaystyle\int_{\M}\alpha(\sigma)(F_+(w(\sigma)+hv(\sigma))-F_+(w(\sigma)))dv_g}{h}.
\end{align*}
We claim that
\begin{equation}\label{plimite}
\lim_{h\rightarrow 0^+}\frac{\displaystyle\int_{\M}\alpha(\sigma)(F_+(w(\sigma)+hv(\sigma))-F_+(w(\sigma)))dv_g}{h}=
\int_{\M}\alpha(\sigma)f_+(w(\sigma))v(\sigma)dv_g.
\end{equation}
\indent Indeed, for a.e. $\sigma\in \M$, and $|h|\in (0,1)$, by the mean value theorem, there exists $\theta\in (0,1)$ (depending of $\sigma$) such that (up to the constant $\beta_f$)
\begin{align}\label{ineq3}
  \alpha(\sigma)|f(w(\sigma)+\theta hv(\sigma))||v(\sigma)| &\leq  \alpha(\sigma)(1+|w(\sigma)+\theta h v(\sigma)|^{q-1})|v(\sigma)|\nonumber\\
   &\leq \alpha(\sigma)|v(\sigma)|+\|\alpha\|_\infty||w(\sigma)|+|v(\sigma)||^{q-1}|v(\sigma)|\nonumber\\
   &\leq\alpha(\sigma)|v(\sigma)|+2^{q-2}\|\alpha\|_\infty(|w(\sigma)|^{q-1}+|v(\sigma)|^{q-1})|v(\sigma)|.\nonumber
\end{align}
Let us set
\[
g(\sigma):=\alpha(\sigma)|v(\sigma)|+2^{q-2}\|\alpha\|_\infty(|w(\sigma)|^{q-1}+|v(\sigma)|^{q-1})|v(\sigma)|,
\]
for a.e. $\sigma\in \M$. Thanks to $q\in \left[2,2^*\right]$,
the H\"{o}lder inequality yields
\begin{eqnarray}
  \int_{\M} g(\sigma)dv_g &\leq & \|\alpha\|_p\|v\|_q+2^{q-2}\|\alpha\|_{\infty}\|w\|_{q}^{q-1}\|v\|_q\nonumber\\
   &&+2^{q-2}\|\alpha\|_{\infty}\|v\|_q^q<+\infty.\nonumber
\end{eqnarray}
\indent Thus $g\in L^1(\M)$ and the Lebesgue's Dominated Convergence Theorem ensures that \eqref{plimite} holds true.\par
We prove now the continuity of the G\^{a}teaux derivative $\Psi'$. Assume that $w_{j}\rightarrow w$ in $\h$ as $j\rightarrow +\infty$ and let us prove that
\begin{equation*}\label{plimite234finaletesi}
\lim_{j\rightarrow +\infty}\|\Psi'(w_j)-\Psi'(w)\|_{(\h)^*}=0,
\end{equation*}
where
\[
\|\Psi'(w_j)-\Psi'(w)\|_{(\h)^*} := \sup_{\|v\|=1}|\langle \Psi'(w_j)-\Psi'(w),v\rangle|.
\]
\indent Now, observe that
\begin{align}\label{norms}
  \|\Psi'(w_{j})-\Psi'(w)\|_{(\h)^*} &:= \sup_{\|v\|=1}|\langle \Psi'(w_{j})-\Psi'(w),v\rangle|\nonumber\\
   &\leq \sup_{\|v\|=1}\left|\int_{\M}\alpha(\sigma)(f_+(u_{j}(\sigma))-f_+(u(\sigma)))v(\sigma)dv_g\right|\\
   &\leq \sup_{\|v\|=1}\int_{\M}\alpha(\sigma)|(f_+(w_{j}(\sigma))-f_+(w(\sigma)))||v(\sigma)|dv_g.\nonumber
\end{align}

The Sobolev imbedding result ensures that $w_{j}\rightarrow w$ in $L^{q}(\M)$, for every $q\in [2,2^*]$, and $w_{j}(\sigma)\rightarrow w(\sigma)$ a.e. in $\M$ as $j\rightarrow +\infty$. Then, up to a subsequence, $w_{j}(\sigma)\rightarrow w(\sigma)$ a.e. in $\M$ as $j\rightarrow +\infty$, and there exists $\eta\in L^{q}(\M)$ such that $|w_{j}(\sigma)|\leq \eta(\sigma)$ a.e. in $\M$ and $|w(\sigma)|\leq \eta(\sigma)$ a.e. in $\M$ (see \cite{KRV} for more details). In particular,
\begin{align*}
&f_+(w_j(\sigma))\rightarrow f_+(w(\sigma)), \ \ \textrm{ a.e.} \ \ \textrm{ in} \ \ \M\\
&|f_+(w_j(\sigma))|\leq \beta_f(1+\eta(\sigma)^{q-1}), \ \ \textrm{ a.e.} \ \ \textrm{ in} \ \ \M.
\end{align*}

Note that the function $\M \ni x\mapsto 1+\eta(\sigma)^{q-1}$ is $p:=\frac{q}{q-1}$-summable
on every bounded measurable subset of $\M$. Therefore, by the Lebesgue Dominated Convergence
Theorem, we infer
\begin{align*}
f_+(w_j)\rightarrow f_+(w), \ \ \textrm{ in}  \ \ L^p(B_{\sigma_0}(\varrho)),
\end{align*}
for every $\varrho>0$, where $B_{\sigma_0}(\varrho)$ denotes the open geodesic ball of radius $\varrho$ centered at $\sigma_0$.\par
\indent At this point, set $\ell_q:=\sup_{\|v\|\leq 1}\|v\|_q$ and let $\varrho_\varepsilon>0$ be such that
\begin{align}\label{formula1}
\int_{\M\setminus B_{\sigma_0}({\varrho_\varepsilon})}|\alpha(\sigma)|^p
dv_g<\left(\frac{\varepsilon}{8\beta_f\ell_q}\right)^p,
\end{align}

\noindent as well as

\begin{align}\label{formula2}
\int_{\M\setminus
B_{\sigma_0}({\varrho_\varepsilon})}|\eta(\sigma)|^p
dv_g<\left(\frac{\varepsilon}{8\beta_f\ell_q\|\alpha\|_\infty}\right)^p.
\end{align}

Moreover, let $j_\varepsilon \in\N$ be such that
\begin{eqnarray}\label{formula3}
\|f_+(w_j)- f_+(w)\|_{L^p(B_{\sigma_0}({\varrho_\varepsilon}))}<\frac{\varepsilon}{4\ell_q\|\alpha\|_\infty},
\end{eqnarray}
for each $j\in \N,$ with $j\geq j_\varepsilon$.\par

Owing to \eqref{formula1}, \eqref{formula2} and \eqref{formula3}, for $j\geq j_\varepsilon$, inequality \eqref{norms} yields

\begin{align}\label{norms22}
  \|\Psi'(w_{j})-\Psi'(w)\|_{(\h)^*} &\leq \sup_{\|v\|\leq 1}\int_{\M\setminus
B_{\sigma_0}({\varrho_\varepsilon})}\alpha(\sigma)|f_+(w_j(\sigma))-
f_+(w(\sigma))||v(\sigma)|dv_g\nonumber\\
   &\leq  2\beta_f \sup_{\|v\|\leq 1}\int_{\M\setminus
B_{\sigma_0}({\varrho_\varepsilon})}\alpha(\sigma)(1+|\eta(\sigma)|^{q-1})|v(\sigma)|dv_g\nonumber\\
   & {}+\|\alpha\|_\infty \sup_{\|v\|\leq
1}\int_{B_{\sigma_0}({\varrho_\varepsilon})}|f_+(w_j(\sigma))- f_+(w(\sigma))||v(\sigma)|dv_g\nonumber\\
&\leq 2\beta_f\ell_q\|\alpha\|_{L^p(\M\setminus B_{\sigma_0}({\varrho_\varepsilon})}+2\beta_f\ell_q\|\alpha\|_\infty
\|\eta\|_{L^p(\M\setminus B_{\sigma_0}({\varrho_\varepsilon}))}\nonumber\\
&{}+\ell_q\|\alpha\|_\infty\|f_+(w_j)-
f_+(w)\|_{L^p(B_{\sigma_0}({\varrho_\varepsilon})}<\varepsilon,
\end{align}
that concludes the proof.
\end{proof}

\begin{lemma}\label{semicontinuity}

Assume that $f\colon \mathbb{R}\rightarrow \mathbb{R}$ is a continuous function such that
condition \eqref{crescita2} holds for every $q\in \left(2,2^*\right)$. Then, for every $\lambda>0,$ the functional $\mathcal
J_\lambda$ is sequentially weakly lower semicontinuous on $\h$.
\end{lemma}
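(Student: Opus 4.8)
The plan is to exploit the splitting $\mathcal J_\lambda=\Phi-\lambda\Psi$ recorded in \eqref{psis}, where $\Phi(w)=\frac12\|w\|^2$ and $\Psi(w)=\int_\M\alpha(\sigma)F_+(w(\sigma))\,dv_g$. The quadratic part $\Phi$ is convex and strongly continuous on $\h$, hence sequentially weakly lower semicontinuous with no further work. Since $\lambda>0$ and the sum of a sequentially weakly lower semicontinuous functional and a sequentially weakly continuous one is again sequentially weakly lower semicontinuous, everything reduces to proving that $\Psi$ is sequentially weakly \emph{continuous} on $\h$.

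To establish this, I would take $w_j\weakto w$ in $\h$ and invoke the compact embedding of Lemma \ref{immer}, which for the prescribed $q\in(2,2^*)$ yields $w_j\to w$ strongly in $L^q(\M)$. Passing to a (non-relabeled) subsequence we may then assume $w_j(\sigma)\to w(\sigma)$ for a.e.\ $\sigma\in\M$ together with a dominating function $\eta\in L^q(\M)$ such that $|w_j(\sigma)|\le\eta(\sigma)$ a.e., exactly as in the proof of Proposition \ref{C1}. Integrating \eqref{crescita2} gives $|F_+(t)|\le\beta_f\big(|t|+\tfrac1q|t|^q\big)$ for all $t$, so by continuity of $F_+$ we get $\alpha(\sigma)F_+(w_j(\sigma))\to\alpha(\sigma)F_+(w(\sigma))$ a.e., with
\[
|\alpha(\sigma)F_+(w_j(\sigma))|\le\beta_f\,\alpha(\sigma)\Big(\eta(\sigma)+\tfrac1q\eta(\sigma)^q\Big)=:G(\sigma).
\]
Because $\alpha\in L^p(\M)$ with $p=q/(q-1)$, Hölder's inequality gives $\alpha\eta\in L^1(\M)$, while $\alpha\in L^\infty(\M)$ gives $\alpha\eta^q\le\|\alpha\|_\infty\eta^q\in L^1(\M)$; hence $G\in L^1(\M)$ and Lebesgue's dominated convergence theorem yields $\Psi(w_j)\to\Psi(w)$ along this subsequence.

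Finally, a standard subsequence (Urysohn) argument upgrades this to convergence of the full sequence: if $\Psi(w_j)\not\to\Psi(w)$, there would be $\varepsilon>0$ and a subsequence with $|\Psi(w_{j_k})-\Psi(w)|\ge\varepsilon$, which still converges weakly to $w$ and therefore, by the step above, admits a further subsequence along which $\Psi\to\Psi(w)$ --- a contradiction. Hence $\Psi$ is sequentially weakly continuous and $\mathcal J_\lambda=\Phi-\lambda\Psi$ is sequentially weakly lower semicontinuous on $\h$. The only mildly delicate point --- and the one worth isolating carefully --- is producing the integrable majorant $G$ for $\alpha F_+(w_j)$; this is precisely where the hypothesis $\alpha\in L^\infty(\M)\cap L^p(\M)$ is used, in place of the $L^1\cap L^\infty$ assumption of the earlier theorems.
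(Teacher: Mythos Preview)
Your argument is correct. The overall architecture --- $\Phi$ is sequentially weakly lower semicontinuous by convexity, so it suffices to show $\Psi$ is sequentially weakly continuous via the compact embedding of Lemma~\ref{immer} --- coincides with the paper's. The technical route to the weak continuity of $\Psi$ differs, however. The paper does not pass through dominated convergence: instead, it applies the mean value theorem to $F_+(w_j)-F_+(w_0)$, bounds the resulting $f_+$ term by the growth condition~\eqref{crescita2}, and arrives at an estimate of the form
\[
|\Psi(w_j)-\Psi(w_0)|\le \beta_f\big(2\|\alpha\|_p+\|\alpha\|_\infty(\|w_j\|_q^{q-1}+\|w_0\|_q^{q-1})\big)\|w_j-w_0\|_q,
\]
which tends to zero directly because $w_j\to w_0$ in $L^q(\M)$. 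This avoids both the extraction of a pointwise dominating function $\eta$ and the Urysohn subsequence argument you need at the end. Your approach, on the other hand, makes the integrability of the majorant $G$ completely explicit and isolates cleanly where each of the hypotheses $\alpha\in L^\infty(\M)$ and $\alpha\in L^p(\M)$ enters --- which is a nice structural observation. Either method is perfectly adequate here.
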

\emph{Proof.}  First, on account of Brezis \cite[Corollaire
III.8]{brezis}, the functional
$\Phi$ is sequentially weakly lower semicontinuous on $\h$. In order to prove that $\Psi$ is weakly continuous, we assume that there exists a sequence $\{w_j\}_{j\in \mathbb N}\subset \h$ which weakly converges to an element $w_0\in \h$, and such that
\begin{equation}\label{wslsc}
|\Psi(w_j)-\Psi(w_0)|>\delta,
\end{equation}
for every $j\in \mathbb{N}$ and some $\delta>0$.\par
 Since $\{w_j\}_{j\in \mathbb{N}}$ is bounded in $\h$ and, taking into account that, thanks to Lemma \ref{immer}, $w_j\rightarrow w_0$ in $L^q(\M)$, the mean value theorem, the growth condition \eqref{crescita2} and the H\"{o}lder inequality, yield
\begin{align}\label{norms24444}
  |\Psi(w_{j})-\Psi(w_0)| &\leq \int_{\M}\alpha(\sigma)\left|F_+(w_j(\sigma))-F_+(w_0(\sigma))\right|dv_g\nonumber\\
   &\leq \beta_f\int_{\M}\alpha(\sigma)\left(2+|w_j(\sigma)|^{q-1}+|w_0(\sigma)|^{q-1}\right)|w_j(\sigma)-w_0(\sigma)|dv_g \nonumber\\
   &\leq \beta_f(2\|\alpha\|_p\|w_j-w_0\|_q+\|\alpha\|_{\infty}(\|w_j\|_{q}^{q-1}+\|w_0\|_{q}^{q-1})\|w_j-w_0\|_q)\nonumber\\
   &\leq \beta_f(2\|\alpha\|_2+\|\alpha\|_{\infty}(M+\|w_0\|_{q}^{q-1}))\|w_j-w_0\|_q,
\end{align}
for some $M>0$.\par
Since the last expression in \eqref{norms24444} tends to zero, this fact
contradicts (\ref{wslsc}). In conclusion, the functional $\Psi$ is sequentially
weakly continuous and this completes the proof.\par
\subsection{A local minimum geometry}

In order to find critical points for $\mathcal{J}_\lambda$ we will apply the following critical point theorem proved in \cite[Theorem 2.1]{VPricceri} and recalled here in a more convenient form (see \cite{GMBSe} for details).

\begin{theorem}\label{BMB}
Let $X$ be a reflexive real Banach space, and let $\Phi,\Psi:X\to\R$
be two G\^{a}teaux differentiable functionals such that $\Phi$ is
strongly continuous, sequentially weakly lower semicontinuous and coercive. Further, assume that $\Psi$
is sequentially weakly upper semicontinuous. For every $r>\inf_X
\Phi$, put
\[
\varphi(r):=\inf_{u\in\Phi^{-1}((-\infty,r))}\frac{\sup_{v\in\Phi^{-1}((-\infty,r))}\Psi(v)-\Psi(u)}{r-\Phi(u)}.
\]
Then, for each $r>\inf_X\Phi$ and each
$\lambda\in\left(0,{1}/{\varphi(r)}\right)$, the restriction
of $\mathcal{J}_\lambda:=\Phi-\lambda\Psi$ to
$\Phi^{-1}((-\infty,r))$ admits a global minimum, which is a
critical point $($local minimum$)$ of $\mathcal{J}_\lambda$ in $X$.
\end{theorem}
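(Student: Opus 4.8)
The plan is to prove the theorem by a direct, Weierstrass-type argument localized to the open sublevel set $\Omega_r := \Phi^{-1}((-\infty,r))$. Fix $r > \inf_X\Phi$ and $\lambda \in (0,1/\varphi(r))$. First I would collect the standing weak-compactness facts: since $\Phi$ is coercive and sequentially weakly lower semicontinuous and $X$ is reflexive, each closed sublevel set $\Phi^{-1}((-\infty,\rho])$ is bounded and sequentially weakly closed, hence sequentially weakly compact; in particular $\Phi$ attains its infimum, so $\Omega_r\neq\emptyset$, and the quantity $M := \sup_{v\in\Omega_r}\Psi(v)$ is finite because the sequentially weakly upper semicontinuous functional $\Psi$ is bounded above on the sequentially weakly compact set $\Phi^{-1}((-\infty,r])$. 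Thus $\varphi(r)$ is well defined (every quotient in its definition is nonnegative, with strictly positive denominator) and $\lambda\varphi(r) < 1$.

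Next, from $\lambda\varphi(r) < 1$ and the definition of $\varphi(r)$ as an infimum, I would select $u_0 \in \Omega_r$ with $\lambda\bigl(M - \Psi(u_0)\bigr) < r - \Phi(u_0)$, i.e. $\mathcal{J}_\lambda(u_0) + \lambda M < r$, and then fix the level
\[
\rho := \tfrac{1}{2}\bigl(\mathcal{J}_\lambda(u_0) + \lambda M + r\bigr),
\]
so that $\Phi(u_0) \le \mathcal{J}_\lambda(u_0) + \lambda M < \rho < r$ (the first inequality is merely $\Psi(u_0) \le M$). On the sequentially weakly compact set $S_\rho := \Phi^{-1}((-\infty,\rho])$ the functional $\mathcal{J}_\lambda = \Phi + (-\lambda)\Psi$ is sequentially weakly lower semicontinuous, being a sum of the sequentially weakly l.s.c. functional $\Phi$ and the sequentially weakly l.s.c. functional $-\lambda\Psi$ (here $\lambda>0$ and $\Psi$ is sequentially weakly u.s.c.). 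Hence $\mathcal{J}_\lambda$ attains a global minimum on $S_\rho$ at some $\bar u$ with $\Phi(\bar u) \le \rho < r$; in particular $\bar u \in \Omega_r$ and $\mathcal{J}_\lambda(\bar u) \le \mathcal{J}_\lambda(u_0)$.

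It then remains to upgrade $\bar u$ from a minimizer over $S_\rho$ to a global minimizer of $\mathcal{J}_\lambda$ over the whole of $\Omega_r$: for $v\in\Omega_r$ with $\Phi(v)\le\rho$ this is immediate since $v\in S_\rho$, while for $v\in\Omega_r$ with $\rho<\Phi(v)<r$ one has, using $\Psi(v)\le M$,
\[
\mathcal{J}_\lambda(v) = \Phi(v) - \lambda\Psi(v) > \rho - \lambda M > \mathcal{J}_\lambda(u_0) \ge \mathcal{J}_\lambda(\bar u),
\]
the middle inequality being exactly the choice of $\rho$. Since $\Phi$ is strongly continuous, $\Omega_r$ is a norm-open neighborhood of $\bar u$, so $\bar u$ is a local minimum of $\mathcal{J}_\lambda$ in $X$; and because $\mathcal{J}_\lambda$ is G\^{a}teaux differentiable, $\mathcal{J}_\lambda'(\bar u)=0$, which gives the required critical point. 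I expect the only genuinely delicate point to be this last step -- ensuring the minimizer does not escape to the boundary $\Phi^{-1}(\{r\})$ -- which is precisely what the intermediate level $\rho$ together with the uniform bound $M$ on $\Psi$ over $\Omega_r$ are designed to prevent; the weak-compactness bookkeeping of the first two paragraphs is routine.
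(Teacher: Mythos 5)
Your argument is correct. Note, however, that the paper contains no proof of this statement to compare against: Theorem \ref{BMB} is Ricceri's variational principle, imported verbatim from \cite[Theorem 2.1]{VPricceri} (in the form recalled in \cite{GMBSe}), so you have in effect supplied a proof the authors omitted. Judged on its own, your proof is a correct, self-contained version of the standard argument. The weak-compactness bookkeeping is right: coercivity plus sequential weak lower semicontinuity of $\Phi$ on a reflexive space make every set $\Phi^{-1}((-\infty,\rho])$ sequentially weakly compact, and sequential weak upper semicontinuity of $\Psi$ then forces $M=\sup_{\Phi^{-1}((-\infty,r))}\Psi<+\infty$; the selection of $u_0$ with $\lambda\bigl(M-\Psi(u_0)\bigr)<r-\Phi(u_0)$ is exactly where the hypothesis $\lambda\varphi(r)<1$ enters (and your reading of the case $\varphi(r)=0$ via $\lambda\varphi(r)<1$ is the right convention). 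The genuinely nice touch is the intermediate level $\rho$ with $\mathcal J_\lambda(u_0)+\lambda M<\rho<r$: minimizing over the weakly compact set $\Phi^{-1}((-\infty,\rho])$ and then discarding the shell $\rho<\Phi(v)<r$ by the crude bound $\Psi(v)\le M$ keeps the minimizer strictly inside the open sublevel set and sidesteps any discussion of $\Psi$ on $\Phi^{-1}(\{r\})$, where the bound by $M$ would not be available. The conclusion is also handled correctly: strong continuity of $\Phi$ makes $\Phi^{-1}((-\infty,r))$ norm-open, so the global minimizer of the restriction is a local minimum of $\mathcal J_\lambda$ in $X$, and G\^{a}teaux differentiability then gives $\mathcal J_\lambda'(\bar u)=0$.
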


For our purposes, we consider the functional~$\mathcal
J_{\lambda}\colon \h\rightarrow \R$ defined by:
\[
\mathcal J_{\lambda}(w)= \Phi(w)-\lambda \Psi(w),
\]
where
\[
\Phi(w):=\frac 1 2 \left(\int_{\M}|\nabla_g w(\sigma)|^2dv_g+\int_{\M}V(\sigma)|w(\sigma)|^2dv_g\right),
\]
and
\[
\Psi(w):= \int_{\M}\alpha(\sigma)F(w(\sigma))dv_g.
\]
\indent Let $\chi\colon (0,+\infty)\rightarrow [0,+\infty)$ be the real function given by
\[
\chi(r):=\frac{\sup_{w\in\Phi^{-1}((-\infty,r))}\Psi(w)}{r},
\]
for every $r>0$. The following result gives an estimate of the function $\chi$.
\begin{proposition}For every $r>0$, one has
\begin{equation}\label{n}
\chi(r)\leq c_q\beta_f\left(\|\alpha\|_p\sqrt{\frac{2}{r}}+\frac{2^{q/2}c_q^{q-1}}{q}\|\alpha\|_{\infty}r^{q/2-1}\right).
\end{equation}
\end{proposition}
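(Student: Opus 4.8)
The plan is to bound the numerator $\sup_{w\in\Phi^{-1}((-\infty,r))}\Psi(w)$ uniformly by an explicit function of $r$, and then divide by $r$. First I would observe that $w\in\Phi^{-1}((-\infty,r))$ means exactly $\tfrac12\|w\|^2<r$, i.e.\ $\|w\|<\sqrt{2r}$; combining this with the continuous Sobolev embedding $\h\hookrightarrow L^q(\M)$ from Lemma~\ref{immer} (taken with $\nu=q$), every competitor in the supremum satisfies $\|w\|_q\le c_q\|w\|<c_q\sqrt{2r}$.

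Next, integrating the growth condition \eqref{crescita2} gives the pointwise bound $|F_+(t)|\le\beta_f\bigl(|t|+\tfrac{1}{q}|t|^q\bigr)$ for every $t\in\R$. Hence, for any admissible $w$,
\[
\Psi(w)\le\int_{\M}\alpha(\sigma)|F_+(w(\sigma))|\,dv_g\le\beta_f\int_{\M}\alpha(\sigma)|w(\sigma)|\,dv_g+\frac{\beta_f}{q}\int_{\M}\alpha(\sigma)|w(\sigma)|^q\,dv_g.
\]
I would estimate the first integral by Hölder's inequality with conjugate exponents $p=q/(q-1)$ and $q$, getting $\|\alpha\|_p\|w\|_q$, and the second simply by $\|\alpha\|_\infty\|w\|_q^q$, so that $\Psi(w)\le\beta_f\bigl(\|\alpha\|_p\|w\|_q+\tfrac{1}{q}\|\alpha\|_\infty\|w\|_q^q\bigr)$. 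Substituting $\|w\|_q<c_q\sqrt{2r}$ then yields $\Psi(w)<\beta_f\bigl(c_q\|\alpha\|_p\sqrt{2r}+\tfrac{1}{q}\|\alpha\|_\infty c_q^q(2r)^{q/2}\bigr)$, a bound independent of $w$. Since $w\mapsto 0$ lies in the sublevel set, the supremum is nonnegative and is dominated by this same quantity; dividing through by $r$, simplifying $\sqrt{2r}/r=\sqrt{2/r}$ and $(2r)^{q/2}/r=2^{q/2}r^{q/2-1}$, and factoring out one power of $c_q$, produces exactly \eqref{n}.

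There is no genuine obstacle here: the estimate is a direct chain of the Sobolev inequality and Hölder's inequality applied on the open sublevel set of $\Phi$. The only points deserving a little care are that the bound must be verified for \emph{every} admissible $w$ before one passes to the supremum, and that one must keep careful track of which Lebesgue norm of $\alpha$ ($\|\cdot\|_p$ or $\|\cdot\|_\infty$) is paired with each of the two terms coming from $F_+$, so that the constants in the final expression match those displayed in \eqref{n}.
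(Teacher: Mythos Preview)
Your proposal is correct and follows essentially the same approach as the paper: integrate the growth bound \eqref{crescita2} to control $F_+$, split the resulting integral into the $\|\alpha\|_p\|w\|_q$ and $\|\alpha\|_\infty\|w\|_q^q$ pieces via H\"older, insert the Sobolev bound $\|w\|_q\le c_q\|w\|<c_q\sqrt{2r}$ on the sublevel set, and divide by $r$. The only cosmetic difference is that the paper states the pointwise estimate on $\Psi$ first and then invokes $\|w\|<\sqrt{2r}$, whereas you front-load the Sobolev step; the substance is identical.
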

\begin{proof}
Taking into account the growth condition expressed by \eqref{crescita2} it follows that
\[
\Psi(w)=\int_{\M}\alpha(\sigma)F(w(\sigma))dv_g\leq \beta_f\int_{\M}\alpha(\sigma)|w(\sigma)|dv_g+\frac{\beta_f}{q}\int_{\M}\alpha(\sigma)|w(\sigma)|^qdv_g.
\]
\indent Moreover, one has
\begin{equation}\label{gio}
\|w\|< \sqrt{2r},
\end{equation}
\noindent for every $u\in\h$ and $\Phi(w)<r$.\par
\indent Now, by using (\ref{gio}), inequality \eqref{constants} yields
\[
\Psi(w)< c_q\beta_f\left(\|\alpha\|_p\sqrt{{2r}}+\frac{c_q^{q-1}}{q}\|\alpha\|_{\infty}(2r)^{q/2}\right),
\]
for every $w\in \h$ such that $\Phi(w)<r$.\par
\indent Hence
\[
\sup_{w\in\Phi^{-1}((-\infty,r))}\Psi(w)\leq c_q\beta_f\left(\|\alpha\|_p\sqrt{{2r}}+\frac{c_q^{q-1}}{q}\|\alpha\|_{\infty}(2r)^{q/2}\right).
\]
\noindent Then, the above inequality immediately gives \eqref{n}.
\end{proof}

Let $h\colon [0,+\infty)\rightarrow [0,+\infty)$ be the real function defined by
\[
h(\gamma):=\frac{q}{\beta_fc_q}\left(\frac{{\gamma}}{ q\sqrt{2}\|\alpha\|_p+2^{q/2}c_q^{q-1}\|\alpha\|_{\infty}\gamma^{q-1}}\right).
\]
\noindent Since $h(0)=0$ and $\lim_{\gamma\rightarrow +\infty}h(\gamma)=0$ (note that $q\in (2,2^*)$) we set
\[
\lambda_\star:=\max_{\gamma\geq 0}h(\gamma),
\]
whose explicit expression is given in relation \eqref{lambda1}.

 With the above notations the following estimate holds.
\begin{lemma}\label{intermedio}
For every $\lambda\in (0,\lambda_{\star})$, there exists $\bar{\gamma}>0$ such that
\begin{equation}\label{nov}
\chi(\bar{\gamma}^2)<\frac{1}{\lambda}.
\end{equation}
\end{lemma}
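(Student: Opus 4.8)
The plan is to read the estimate \eqref{n} through the substitution $r=\gamma^2$ and to recognize that the resulting bound is exactly $1/h(\gamma)$.

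First I would substitute $r=\gamma^2$ (with $\gamma>0$) into \eqref{n}. Using $\sqrt{2/r}=\sqrt{2}/\gamma$ and $r^{q/2-1}=\gamma^{q-2}$, and then factoring $1/(q\gamma)$ out, one arrives at
\[
\chi(\gamma^2)\leq\frac{c_q\beta_f}{q\gamma}\left(q\sqrt{2}\,\|\alpha\|_p+2^{q/2}c_q^{q-1}\|\alpha\|_\infty\gamma^{q-1}\right)=\frac{1}{h(\gamma)},
\]
the last equality being merely the definition of $h$. Thus $\chi(\gamma^2)\leq 1/h(\gamma)$ for every $\gamma>0$.

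Next I would record the elementary features of $h$ that are needed to select $\bar{\gamma}$: since $\alpha\neq 0$ the denominator defining $h$ is strictly positive on $[0,+\infty)$, so $h$ is continuous there; moreover $h(0)=0$, $h$ is strictly positive on $(0,+\infty)$, and $h(\gamma)\to 0$ as $\gamma\to+\infty$ because $q>2$. Hence the maximum $\lambda_\star=\max_{\gamma\geq 0}h(\gamma)$ is attained at some $\bar{\gamma}\in(0,+\infty)$, and $\lambda_\star>0$; its explicit value is the one displayed in \eqref{lambda1}.

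Finally, given $\lambda\in(0,\lambda_\star)$, one has $h(\bar{\gamma})=\lambda_\star>\lambda$, and combining this with the bound above yields
\[
\chi(\bar{\gamma}^2)\leq\frac{1}{h(\bar{\gamma})}=\frac{1}{\lambda_\star}<\frac{1}{\lambda},
\]
which is precisely \eqref{nov}. The only step that requires any attention is checking that the maximizer $\bar{\gamma}$ is strictly positive, so that $\bar{\gamma}^2>0$ and $\chi(\bar{\gamma}^2)$ is defined; this is immediate from $h(0)=0<\lambda_\star$, and everything else is a routine calculation.
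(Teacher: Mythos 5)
Your proposal is correct and follows essentially the same route as the paper: evaluate the estimate \eqref{n} at $r=\bar{\gamma}^2$, observe that the resulting bound is exactly $1/h(\bar{\gamma})$, and choose $\bar{\gamma}$ so that $h(\bar{\gamma})>\lambda$. The only cosmetic difference is that you take $\bar{\gamma}$ to be the maximizer of $h$ (justifying that the maximum is attained), whereas the paper simply picks any $\bar{\gamma}>0$ with $\lambda<h(\bar{\gamma})$, which exists because $\lambda<\lambda_{\star}=\max_{\gamma\geq 0}h(\gamma)$; both choices are equally valid.
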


\begin{proof}
\indent Since $0<\lambda<\lambda_{\star}$, bearing in mind \eqref{lambda1},
there exists $\bar{\gamma}>0$ such that
\begin{equation}\label{n3}
\lambda<\lambda_{\star}{(\bar{\gamma})}:=\frac{q}{c_q\beta_f}\left(\frac{{\bar\gamma}}{ q\sqrt{2}\|\alpha\|_p+2^{q/2}c_q^{q-1}\|\alpha\|_{\infty}\bar\gamma^{q-1}}\right).
\end{equation}
\indent On the other hand, evaluating inequality \eqref{n} in $r=\bar\gamma^2$, we also have
\begin{equation}\label{n2}
\chi(\bar{\gamma}^2)\leq c_q\beta_f\left(\sqrt{{2}}\frac{\|\alpha\|_p}{\bar\gamma}+\frac{2^{q/2}c_q^{q-1}}{q}\|\alpha\|_{\infty}{\bar\gamma}^{q-2}\right).
\end{equation}
Inequalities \eqref{n3} and \eqref{n2} yield \eqref{nov}.
\end{proof}

\emph{Proof of Theorem \ref{Main1}} - The main idea of the proof consists in applying Theorem \ref{BMB}, to the functional~$\mathcal J_{\lambda}$. First of all, note that $\h$ is a Hilbert space and the functionals~$\Phi$ and
$\Psi$ have the regularity required by Theorem \ref{BMB} (see Lemma~\ref{semicontinuity}). Moreover, it is clear that the functional $\Phi$ is strongly continuous, coercive in $\h$ and
\[
\inf_{w\in \h}\Phi(w)=0.
\]
Since $\lambda\in (0,\lambda_\star)$, by Lemma \ref{intermedio} , there exists $\bar \gamma>0$ such that inequality \eqref{nov} holds. Now, it is easy to note that
\[
\varphi(\bar{\gamma}^2):=\inf_{w\in\Phi^{-1}((-\infty,\bar{\gamma}^2))}\frac{\sup_{v\in\Phi^{-1}((-\infty,\bar{\gamma}^2))}\Psi(v)-\Psi(w)}{r-\Phi(w)}\leq \chi(\bar{\gamma}^2),
\]
owing to $w_0\in\Phi^{-1}((-\infty,\bar{\gamma}^2))$ and $\Phi(w_0)=\Psi(w_0)=0$, where $w_0\in\h$ is the zero function.\par
 By \eqref{nov}, it follows that
\begin{equation}\label{n2fg}
\varphi(\bar{\gamma}^2)\leq \chi(\bar{\gamma}^2)\leq \beta_fc_q\left(\sqrt{{2}}\frac{\|\alpha\|_p}{\bar\gamma}+\frac{2^{q/2}c_q^{q-1}}{q}\|\alpha\|_{\infty}{\bar\gamma}^{q-2}\right)<\frac{1}{\lambda}.
\end{equation}
Inequalities in \eqref{n2fg} give
\[
\lambda\in \left(0,\frac{q}{\beta_fc_q}\left(\frac{{\bar\gamma}}{ q\sqrt{2}\|\alpha\|_p+2^{q/2}c_q^{q-1}\|\alpha\|_{\infty}\bar\gamma^{q-1}}\right)\right)\subseteq \left(0,\frac{1}{\varphi(\bar{\gamma}^2)} \right).
\]

\indent The variational principle in Theorem \ref{BMB} ensures the existence of $w_\lambda\in\Phi^{-1}((-\infty,\bar{\gamma}^2))$ such that
\[
\Phi'(w_\lambda)-\lambda \Psi'(w_\lambda)=0.
\]
 Moreover, $w_\lambda$ is a global minimum of the restriction of the functional $\mathcal{J}_{\lambda}$ to the sublevel $\Phi^{-1}((-\infty,\bar{\gamma}^2))$.

Now, we have to show that the
solution~$w_\lambda$ found here above is not the trivial (identically zero) function. Let us fix
$\lambda\in (0, \lambda_{\star}{(\bar{\gamma})})$ for some $\bar \gamma>0$. Finally,
let $w_\lambda$ be such that
\begin{equation}\label{minimum}
\mathcal J_{\lambda}(w_{\lambda})\leq \mathcal
J_{\lambda}(w),\quad \mbox{for any}\,\,\, w\in \Phi^{-1}((-\infty,\bar\gamma^{2})),
\end{equation}
and
\begin{equation*}\label{minimum2}
w_\lambda\in\Phi^{-1}((-\infty,\bar\gamma^{2}))\,,
\end{equation*}
and also $w_\lambda$ is  a critical point of $\mathcal
J_{\lambda}$ in $\h$.\par

  In order to prove that $w_\lambda\not \equiv 0$ in $\h$\,,
first we claim that there exists a sequence of functions $\big\{w_j\big\}_{j\in
\NN}$ in $\h$ such that
\begin{equation}\label{wjBlu}
\limsup_{j\to
+\infty}\frac{\Psi(w_j)}{\Phi(w_j)}=+\infty\,.
\end{equation}

By the assumption on the limsup in~(\ref{azero}) there exists a sequence
$\{t_j\}_{j\in\NN}\subset (0,+\infty)$ such that $t_j\to 0^+$ as $j\to
+\infty$ and
\begin{equation*}\label{limsup}
 \lim_{j\rightarrow +\infty}\frac{F(t_j)}{t_j^2}=+\infty,
\end{equation*}
namely, we have that for any $M>0$ and $j$
sufficiently large
\begin{equation}\label{M}
F(t_j)\geq Mt_j^2\,.
\end{equation}

\indent Now, define $w_j:=t_jw_{\rho,r}^{\varepsilon}$ for any $j\in \NN$, where the function $w_{\rho,r}^{\varepsilon}$ is given in \eqref{TestFunction}. Of course one has that $w_j\in \h$ for any $j\in \NN$. Furthermore, taking into account the algebraic properties of the functions $w_{\rho,r}^{\varepsilon}$ stated in $i_1)$--$i_3)$, since $F_+(0)=0$, and by using (\ref{M}) we can write:
\begin{multline}\label{conto1}
 {\frac{\Psi(w_j)}{\Phi(w_j )}}= \frac{ \displaystyle\int_{A_{\varepsilon r}^{\rho}(\sigma_0)}\alpha(\sigma)F_+(w_j(\sigma))\, dv_g+\displaystyle\int_{A_{r}^{\rho}(\sigma_0)\setminus A_{\varepsilon r}^{\rho}(\sigma_0)} \alpha(\sigma)F_+(w_j(\sigma))\,dv_g}{ \Phi(w_j)}  \\
 = \frac{ \displaystyle\int_{A_{\varepsilon r}^{\rho}(\sigma_0)}\alpha(\sigma)F(t_j)\, dv_g+\displaystyle\int_{A_{r}^{\rho}(\sigma_0)\setminus A_{\varepsilon r}^{\rho}(\sigma_0)}
\alpha(\sigma)F(t_jw_{\rho,r}^{\varepsilon}(\sigma))\,dv_g}{\Phi(w_j)}  \\
\geq  2\frac{\alpha_0 M t^2_j\operatorname{Vol}_g(A_{\varepsilon r}^{\rho}(\sigma_0))+{\displaystyle\int_{A_{r}^{\rho}(\sigma_0)\setminus A_{\varepsilon r}^{\rho}(\sigma_0)}
\alpha(\sigma)F(t_jw_{\rho,r}^{\varepsilon}(\sigma))\,dv_g}}{t_j^2\|w_{\rho,r}^{\varepsilon}\|^2},
\end{multline}
for $j$ sufficiently large.

Now we have to distinguish two different cases, i.e. the case when the
lower limit in (\ref{azero}) is infinite and the one in which the
lower limit in (\ref{azero}) is finite.

\textbf{Case 1:} Suppose that
$\lim_{t\to 0^+}\frac{F(t)}{t^2}=+\infty$.

Then, there exists $\rho_M>0$ such that for any $t$ with $0<t<\rho_M$
\begin{equation}\label{H}
F(t) \geq Mt^2.
\end{equation}

Since $t_j\to 0^+$ and $0\leq w_{\rho,r}^{\varepsilon}(\sigma)\leq 1$ in $\M$, then $w_j(\sigma)=t_jw_{\rho,r}^{\varepsilon}(\sigma)\to 0^+$ as $j\to +\infty$ uniformly in $\sigma\in \M$. Hence, $0\leq w_j(\sigma)< \rho_M$ for $j$ sufficiently large and for any $\sigma\in \M$. Hence, as a consequence of (\ref{conto1}) and (\ref{H}), we have
that
\begin{multline*}
{\frac{\Psi(w_j)}{\Phi(w_j )}  \geq 2\frac{\alpha_0 M
t_j^2\operatorname{Vol}_g(A_{\varepsilon r}^{\rho}(\sigma_0))+{\displaystyle\int_{A_{r}^{\rho}(\sigma_0)\setminus A_{\varepsilon r}^{\rho}(\sigma_0)} \alpha(\sigma)F(t_jw_{\rho,r}^{\varepsilon}(\sigma))\,dx}}{t_j^2\|w_{\rho,r}^{\varepsilon}\|^2}} \\
{ \geq 2\alpha_0M\frac{\operatorname{Vol}_g(A_{\varepsilon r}^{\rho}(\sigma_0))+{\displaystyle\int_{A_{r}^{\rho}(\sigma_0)\setminus A_{\varepsilon r}^{\rho}(\sigma_0)}
|w_{\rho,r}^{\varepsilon}(\sigma)|^2\,dv_g}}{\|w_{\rho,r}^{\varepsilon}\|^2}},
\end{multline*}
for $j$ sufficiently large. The arbitrariness of $M$ gives
(\ref{wjBlu}) and so the claim is proved.

\medskip

\textbf{Case 2:} Suppose that $\liminf_{t\to 0^+}\frac{F(t)}{t^2}=\ell\in
\RR$\,.

Then, for any $\epsilon>0$ there exists $\rho_\epsilon>0$ such that for any $t$ with $0<t<\rho_\epsilon$
\begin{equation}\label{epsilon}
F(t) \geq(\ell-\epsilon)t^2\,.
\end{equation}
Arguing as above, we can suppose that
$0\leq w_j(\sigma)=t_jw_{\rho,r}^{\varepsilon}(\sigma)<\rho_\epsilon$
for $j$ large enough and any $\sigma\in \M$. Thus, by (\ref{conto1})
and (\ref{epsilon}) we get
\begin{multline}
{\frac{\Psi(w_j)}{\Phi(w_j )}  \geq
2\frac{\alpha_0M t_j^2\operatorname{Vol}_g(A_{\varepsilon r}^{\rho}(\sigma_0))+{\displaystyle\int_{A_{r}^{\rho}(\sigma_0)\setminus A_{\varepsilon r}^{\rho}(\sigma_0)} \alpha(\sigma)F(
t_jw_{\rho,r}^{\varepsilon}(\sigma))\,dv_g}}{t_j^2\|w_{\rho,r}^{\varepsilon}\|^2}}\\
{ \geq 2\alpha_0\frac{
M\operatorname{Vol}_g(A_{\varepsilon r}^{\rho}(\sigma_0))+{(\ell-\epsilon)
\displaystyle\int_{A_{r}^{\rho}(\sigma_0)\setminus A_{\varepsilon r}^{\rho}(\sigma_0)} |w_{\rho,r}^{\varepsilon}(\sigma)|^2\,dv_g}}{\|w_{\rho,r}^{\varepsilon}\|^2}}, \label{eps2}
\end{multline}
provided $j$ is sufficiently large.

Choosing $M>0$ large enough, say
\[
M>\max\left\{0, -\frac{2\ell}{\operatorname{Vol}_g(A_{\varepsilon r}^{\rho}(\sigma_0))}\int_{A_{r}^{\rho}(\sigma_0)\setminus A_{\varepsilon r}^{\rho}(\sigma_0)}|w_{\rho,r}^{\varepsilon}(\sigma)|^2\,dv_g\right\},
\]
and $\epsilon>0$ small enough so that
\[
\epsilon \int_{A_{r}^{\rho}(\sigma_0)\setminus A_{\varepsilon r}^{\rho}(\sigma_0)}|w_{\rho,r}^{\varepsilon}(\sigma)|^2\,dv_g<M\frac{\operatorname{Vol}_g(A_{\varepsilon r}^{\rho}(\sigma_0))}{2}+\ell \int_{A_{r}^{\rho}(\sigma_0)\setminus A_{\varepsilon r}^{\rho}(\sigma_0)}|w_{\rho,r}^{\varepsilon}(\sigma)|^2\,dv_g,
\]
by (\ref{eps2}) we get
\begin{align*}
\frac{\Psi(w_j)}{\Phi(w_j)} &\geq 2 \alpha_0 \frac{
M \operatorname{Vol}_g (A_{\varepsilon r}^\rho (\sigma_0)) + (\ell - \epsilon) \displaystyle\int_{A_r^\rho(\sigma_0) \setminus A_{\varepsilon r}^\rho(\sigma_0)} |w_{\rho,r}^\varepsilon (\sigma)|^2 \, dv_g
}{
\|w_{\rho,r}^\varepsilon\|^2
} \\
&\geq \frac{2 \alpha_0}{\|w_{\rho,r}^\varepsilon\|^2} \left( M \operatorname{Vol}_g ( A_{\varepsilon r}^\rho (\sigma_0)) + \ell \int_{A_r^\rho(\sigma_0) \setminus A_{\varepsilon r}^\rho(\sigma_0)} |w_{\rho,r}^\varepsilon (\sigma)|^2 \, dv_g \right. \nonumber \\
&\quad \left. - \varepsilon \int_{A_r^\rho(\sigma_0) \setminus A_{\varepsilon r}^\rho(\sigma_0)} |w_{\rho,r}^\varepsilon (\sigma)|^2 \, dv_g \right) \nonumber \\
&= \alpha_0 \frac{M}{\|w_{\rho,r}^\varepsilon\|^2} \operatorname{Vol}_g(A_{\varepsilon r}^\rho(\sigma_0)), \nonumber
\end{align*}
for $j$ large enough. Also in this case the arbitrariness of $M$
gives assertion~(\ref{wjBlu}).

Now, note that
\[
\|w_j\|=t_j\,\|w_{\rho,r}^{\varepsilon}\|\to 0,
\]
as $j\to +\infty$, so that for $j$ large enough we have
\(
\|w_j\|< \sqrt{2}\bar \gamma.
\)
Thus
\begin{equation}\label{wj1}
w_j\in \Phi^{-1}\big((-\infty, \bar \gamma^2)\big)\,,
\end{equation}
provided $j$ is large enough. Also, by (\ref{wjBlu}) and the fact that
$\lambda>0$
\begin{equation}\label{wj2}
\mathcal
J_{\lambda}(w_j)=\Phi(w_j)-\lambda\Psi(w_j)<0,
\end{equation}
for $j$ sufficiently large.

Since $w_\lambda$ is a global minimum of the restriction of $\mathcal
J_{\lambda}$ to $\Phi^{-1}\big((-\infty,\bar \gamma^2)\big)$ (see (\ref{minimum})), by (\ref{wj1}) and (\ref{wj2})
we conclude that
\begin{equation*}\label{nontrivial}
\mathcal J_{\lambda}(w_\lambda)\leq \mathcal
J_{\lambda}(w_j)<\mathcal J_{\lambda}(0)\,,
\end{equation*}
so that $u_\lambda\not\equiv 0$ ($\mathcal J_{\lambda}(0)=0$) in $\h$.\par
 Thus, $w_\lambda$ is a
non--trivial weak solution of problem~\eqref{problema}. The arbitrariness
of $\lambda$ gives that $w_\lambda\not \equiv 0$ for any
$\lambda\in (0, \lambda_{\star})$.

Now, we claim that $\lim_{\lambda\rightarrow
0^+}{\|w_{\lambda}\|}=0.$

For this, let us fix
$\lambda\in (0, \lambda_{\star}{(\bar{\gamma})})$ for some $\bar \gamma>0$. By $\Phi(w_\lambda)<\bar \gamma^2$ one has that
\[
\Phi(w_\lambda)=\frac 1 2\|w_{\lambda}\|^2<\bar
\gamma^2,
\]
that is
\(
\|w_{\lambda}\|<\sqrt{2}\bar \gamma.
\)
Thus, by using relations~(\ref{crescita2}) and \eqref{constants}, one has
\begin{align}\label{bounded}
\left|\int_{\M} \alpha(\sigma)f(w_{\lambda}(\sigma))w_{\lambda}(\sigma)dv_g\right|  &\leq \beta_f\Bigg(
\int_{\M}\alpha(\sigma)|w_\lambda(\sigma)|dv_g+\int_{\M}\alpha(\sigma)|w_\lambda(\sigma)|^qdv_g \Bigg) \nonumber\\
& \leq \beta_f\Bigg(\|\alpha\|_p\|w_\lambda\|_q+\|\alpha\|_\infty\|w_\lambda\|_q^q\Bigg) \\
& < c_q\beta_f\Bigg(\sqrt{2}\|\alpha\|_p\bar \gamma+2^{q/2}c_q^{q-1}\|\alpha\|_\infty\bar\gamma^q\Bigg).\nonumber
\end{align}

Since $w_\lambda$ is a critical point of $\mathcal
J_{\lambda}$\,, then $\langle \mathcal
J_{\lambda}'(w_\lambda), \varphi \rangle=0$, for any $\varphi
\in \h$ and every $\lambda\in (0,\lambda_{\star}{(\bar{\gamma})})$\,. In particular $\langle
\mathcal J_{\lambda}'(w_{\lambda}), w_{\lambda}\rangle=0$, that is
\begin{equation}\label{rangle}
\langle \Phi'(w_{\lambda}), w_{\lambda}\rangle=\lambda\int_{\M}
\alpha(\sigma)f(w_{\lambda}(\sigma))w_{\lambda}(\sigma)dv_g
\end{equation}
for every $\lambda\in (0,\lambda_{\star}{(\bar{\gamma})})$.

Then, setting
\[
\kappa_{\bar\gamma}:=c_q\beta_f\Bigg(\sqrt{2}\|\alpha\|_p\bar \gamma+2^{q/2}c_q^{q-1}\|\alpha\|_\infty\bar\gamma^q\Bigg),
\]
by (\ref{bounded}) and (\ref{rangle}), it follows that
\[
0\leq \|w_{\lambda}\|^2=\langle\Phi'(w_\lambda),w_\lambda\rangle=\lambda\int_{\M}
\alpha(\sigma)f(w_\lambda(\sigma))w_\lambda(\sigma)\,dv_g< \lambda\,\kappa_{\bar \gamma}
\]
for any
$\lambda\in (0,\lambda_{\star}{(\bar{\gamma})})$\,. We get $
\lim_{\lambda\rightarrow 0^+}{\|w_{\lambda}\|}=0$\,, as
claimed. This completes the proof.\par

\begin{remark}\label{ipotesi}
It easily seen that the statements of Theorem \ref{Main1} are valid if $f$ is a nonlinear term such that
\begin{equation*}
\lim_{t\rightarrow 0^{+}}\frac{f(t)}{t}=+\infty.
\end{equation*}
Moreover, we notice that, along the proof of Theorem \ref{Main1}, assumption
\begin{equation*}
\liminf_{t\rightarrow 0^+}\frac{F(t)}{t^2}>-\infty,
\end{equation*}
ensures the existence of a sequence of functions $\big\{w_j\big\}_{j\in
\NN}$ in $\h$ such that
\begin{equation*}
\limsup_{j\to
+\infty}\frac{\Psi(w_j)}{\Phi(w_j)}=+\infty\,.
\end{equation*}
It is natural to ask if this condition can be removed changing the variational technique. In the Euclidean setting this fact seems to be possible by using the analytic properties of the function $w_0^{\varepsilon_0}$ and the explicit expression of the geodesic volume of the Euclidean balls.
\end{remark}

\begin{remark}
A direct and meaningful consequence of Theorem \ref{Main1} is given in Corollary \ref{Main3}. More precisely, as a simple computation shows, the statement of Corollary \ref{Main3} are valid for every parameter
\begin{equation*}
\lambda\in \left(0,\frac{1}{4(s-1)c_s^2}\left(\frac{s(s-2)^{s-2}}{\|\alpha\|_{\frac{s}{s-1}}^{s-2}\|\alpha\|_{\infty}}\right)^{1/(s-1)}\right).
\end{equation*}
\end{remark}

\begin{ex}\label{esempio}
Let $(\mathcal{M},g)$ be a complete, non--compact, three dimensional Riemannian manifold
such that conditions $(As_{R}^{H, g})$ and \eqref{c6nuovo} hold.
As a model for $f$ we can take the nonlinearity
\begin{equation*}
f(t)=\sqrt{t},\qquad\text{for all $t \geq 0$}.
\end{equation*}
Furthermore, let $\alpha\in L^{\infty}(\M)\cap L^{3/2}(\M)\setminus\{0\}$ and such that \eqref{proprietabeta} holds. Set
\begin{equation*}
\beta=\frac{2\sqrt[4]{3}}{3\sqrt{3}},
\end{equation*}
and
\begin{equation*}
\lambda_{\star}=\frac{1}{4\beta c_3^2}\sqrt{\frac{3}{\|\alpha\|_{3/2}\|\alpha\|_{\infty}}}.
\end{equation*}
Then, fixing $\theta>0$, Theorem \ref{Main1} ensures that, for every $\lambda\in (0,\lambda_{\star})$,
the following problem
\begin{equation}\label{problemanuovooo22}
\left\{
\mkern-8mu
\begin{array}{ll}
-\Delta_gw+(1+d_g(\sigma_0,\sigma)^\theta)w=\lambda \alpha(\sigma)\sqrt{w} & \mbox{ in } \mathcal{M}\\
w\geq 0 & \mbox{ in } \mathcal{M}
\end{array}\right.
\end{equation}
 \noindent admits at least one non--trivial weak solution $w_{\lambda}\in \h$ such that
\[
\lim_{\lambda\rightarrow 0^+}\|w_{\lambda}\|=0.
\]
Moreover, Theorem 3.1 of \cite{FaraciFarkas} yields $w_{\lambda}\in L^{\infty}(\mathcal{M})$ and
\[
\lim_{d_g(\sigma_0,\sigma)\rightarrow +\infty}w_{\lambda}(\sigma)=0.
\]
\end{ex}

\begin{remark}
Since the real function \(h \colon (0,+\infty) \to \mathbb{R}\) defined by
\begin{equation*}
h(t)=\frac{2}{3\sqrt{t}}=\frac{\displaystyle\int_0^{t}\sqrt{\tau}\,d\tau}{t^2}
\end{equation*}
is strictly decreasing, it follows from \cite[Theorem 1.1]{FaraciFarkas} that, for each $r>0$ there exists an open interval $I_r\subseteq (0,+\infty)$ such that, for every $\lambda\in I_r,$ the problem \eqref{problemanuovooo22}
 has a  weak solution $w_\lambda\in \h,$ satisfying
\begin{equation*}
 \int_{\M}  \left(|\nabla_g w(\sigma)|^2+(1+d_g(\sigma_0,\sigma)^\theta|w(\sigma)|^2\right)  dv_g<r.
 \end{equation*}
 However, differently than in Theorem \ref{Main1}, in \cite[Theorem 1.1]{FaraciFarkas} no explicit form of the interval $I_r$ is given.
\end{remark}

\paragraph{Further Perspectives.} As it is well--known one of the most famous
problems in Differential Geometry is the so--called Yamabe problem. This
problem has a PDE formulation which has been extensively studied by many
authors (see, for instance, \cite{EH1, EH2,KRV}). We intend to study a challenging problem
related to Yamabe--type equations on non--compact Riemannian manifolds by using
a suitable group-theoretical approach based on the Palais criticality principle (see, for related topics, the papers \cite{Molica, Molica2, MP1}) under the asymptotically condition $(As_{R}^{H, g})$ on the Ricci curvature. In particular, motivated by the recent results obtained in \cite{M0,M1,MP3,MoRe} we are interested on the existence of multiple solutions for the following critical equation
\begin{equation*}\label{problemacritical}
\left\{
\begin{array}{ll}
-\Delta_gw+V(\sigma)w=\lambda \alpha(\sigma)|u|^{2^*-2}u+g(\sigma,w) & \mbox{ in } \mathcal{M}\\
w\rightarrow 0 & \mbox{ as } d_g(\sigma_0,\sigma)\rightarrow +\infty,
\end{array}\right.
\end{equation*}
where $g$ is a suitable subcritical term. This problem will be studied in a forthcoming paper.\par
\bigskip
\textbf{Acknowledgments.}
The authors are partially supported
by the Italian MIUR project \emph{Variational methods, with applications to problems in mathematical
physics and geometry} and are members of the \emph{Gruppo Nazionale per
l'Analisi Matematica, la Probabilit\`a e le loro Applicazioni} (GNAMPA)
of the \emph{Istituto Nazionale di Alta Matematica} (INdAM).

\end{document}